\documentclass[11pt,leqno]{amsart}
 \usepackage[margin=1in]{geometry}
\usepackage{xr-hyper}

\usepackage{overpic,wrapfig}

\usepackage{amssymb, amsmath, graphicx, comment, enumerate}
\usepackage[dvipsnames]{xcolor}
\usepackage{amscd}
\usepackage{algorithm}
\usepackage{algorithmic}
\usepackage{tikz-cd}
\usepackage{url}

\numberwithin{equation}{section}


\newtheorem{clai}{Claim}[section]

\newtheorem{cor}[clai]{Corollary}
\newtheorem{defn}[clai]{Definition}
\newtheorem{exam}[clai]{Example}

\newtheorem{lemm}[clai]{Lemma}

\newtheorem{obse}[clai]{Observation}

\newtheorem{prop}[clai]{Proposition}

\newtheorem{rema}[clai]{Remark}

\newtheorem{theo}[clai]{Theorem}
\numberwithin{equation}{section}




\newcommand{\bbbZ}{{\mathbb Z}}

\def \mb {\mathbb}


\newcommand{\Aut}{{\rm Aut}}

\newcommand{\In}{{\rm In}}

\newcommand{\nbd}{{\rm Nbd}}
\newcommand{\Out}{{\rm Out}}


\def \rar {\rightarrow}


\newcommand{\ttimes}[1]{{\,{\times}_{#1}\,}}

\newcommand{\comm}[1]{}

\begin{document}

\title{Lifting Voltages in Graph Covers}
\author{Nata\v sa Jonoska, Mil\'e Kraj\v cevski, \& Gregory L. McColm}
\address{
         Department of Mathematics \& Statistics, \\
         University of South Florida \\
         Tampa, FL 33620}
\email{ {jonoska@usf.edu}, {mile@usf.edu}, { mccolm@usf.edu}}

\begin{abstract}
We consider voltage digraphs, here referred to as graphs, whose edges are 
 labeled with elements from a given group, and explore their derived graphs. Given two voltage graphs, with voltages in abelian groups, 
we establish a necessary and sufficient condition for 
 their two derived graphs to be isomorphic.
This condition requires: (1) the existence of a voltage 
 graph that covers both given graphs, and (2)  
 when the two sets of voltages are lifted to the common cover, the correspondence between these sets of voltages determines 
 an isomorphism between the groups generated by these voltages. 
We show that conditions (1) and (2) are decidable, and provide  a method for constructing the common cover and for lifting the voltage assignments.

\end{abstract}

\maketitle

\section{Introduction}\label{S_Introduction}

Although voltage graphs - graphs and digraphs whose edges are labeled with elements from a given group~\cite{G74,GT} - were introduced to address certain problems in  topological 
 graph theory, they can also provide 
 tools to study 
 `large', or infinite graphs of 
  `high symmetry'.
For example, periodic graphs are often used to represent 
 covalent crystals, with the vertices representing atoms or 
 molecular building blocks, and the edges representing bonds 
 or ligands, respectively \cite{DFOK}.
 These graphs are characterized by 
 their automorphism groups as groups that  have a free 
 abelian subgroup of finite rank and finite index.  In \cite{Chung}, 
 periodic graphs were represented by finite 
 graphs, i.e., the quotient graphs,  
 (corresponding to  the 
 repeating unit) with voltages from a free abelian subgroup. Therefore a periodic digraph can be seen as a cover over a finite digraph associated with the unit of the periodic graph. The edges of the finite covered digraph can be labeled with group elements from a free abelian group acting on the periodic graph, forming a voltage graph
 whose derived graph is the original periodic digraph.
 
 One of the main questions that arises in the area of crystallographic structures is how to determine when two derived periodic graphs are isomorphic, in particular, when we are given only the quotient graphs of the two periodic graphs and the respective group elements assigned on their edges. 
 Periodic graphs arising from graphs  labeled with symbols from an alphabet were considered in~\cite{JKM,JKM2} through languages associated with labels of cyclic walks 
 that start and end in a given unit cell. These languages were recognized by deterministic counter machines, where the vector displacements of the transitions can be seen as voltages associated with the graphs of the counter machines. 
 
 Here we address the question of when derived graphs of two given finite connected voltage graphs are isomorphic. 
A more restricted question about the isomorphism of two derived graphs that are derived from a single graph but with two distinct sets of voltage assignments was studied in \cite{Skoviera} and \cite{KL0, KL1}.
Here we address a more general question when two non-isomorphic finite graphs with voltage assignments arise to isomorphic derived graphs. 
In answering this question we prove that whenever two derived graphs are isomorphic, there must be a common cover graph, covering both given voltage graphs.  After  constructing  the common cover of the two voltage graphs (that exists in certain conditions)  the voltages must be lifted.
When two sets of voltages are lifted to the common graph, the situation becomes the same to the one addressed by \cite{Skoviera} and \cite{KL0, KL1}. 
We also show that this common cover has a somewhat categorical property;  it is a minimal common cover such that every other common cover of the two given voltage graphs must factor through it (Lifting voltages has been addressed from categorical perspective in~\cite{Jenca2023voltage}). The success of the common cover construction depends on the groups that provide the voltages. 
We show that this method works if the voltage groups are 
 Dedekind - which includes abelian groups. 
In the case of abelian groups, we show that existence of an isomorphism of the (possibly infinite) derived graphs is decidable. 
However, we provide an example of a  pair of voltage graphs, when voltages are taken from groups  with subgroups that are not normal,   this method does 
 not provide an answer whether corresponding derived graphs are isomorphic.

We introduce preliminaries and notations in  
Section~\ref{preliminaries}. 
 Section~\ref{NoLabels}  reviews graph covers  and lifts.  It also shows that for two finite connected graphs with regular covers have a common cover. This common cover can be chosen to be minimal (up to an isomorphism) such that every other common cover factors through it. We also observe that this cover is a subgraph of the direct product of the two graphs. 
In Section~\ref{Svoltages}, we review voltage graphs and derived graphs. We introduce condensation of voltages, that is, reassigning voltages to the graph by fixing a spanning tree whose edges have voltage assignment the group identity. The condensation helps  to   lift  voltage assignments  so that the new derived graph is isomorphic to the original derived graph. 
Section~\ref{LVI} contains our main result. It shows a method to determine whether two finite connected voltage graphs have isomorphic 
   derived graphs. The method works in the case of voltages taken from a Dedekind group, and it  is constructive in case  of abelian groups. 
We conclude with a brief description of pairs of voltage graphs for 
 which this method is ineffective.

\section{Preliminaries}\label{preliminaries}

In this section, we review some notions of graph and group theory.
See \cite{Behzad,Rotman,GGT,DD} for details. 
We use the standard notation $\bbbZ$ 
 for integers. 

A {\em graph} is a quadruple $\Gamma = (V, E, \iota, 
 \tau)$, where $V$ is a set of {\em vertices}, $E$ is a 
 set of {\em edges}, and $\iota, \tau : E \to V$ 
 are {\em initial endpoint} and {\em terminal endpoint} functions, 
 respectively. In the combinatorics literature, such a structure is called 
  a {\em digraph} or a {\em multi-digraph}. However, in the rest of the exposition we allow walking in either direction on each edge, and the directions are used only to distinguish  associated group elements labeling the edges from their inverses (see Section~\ref{Svoltages}).
For edge sets and vertex sets, we do not use the subscript $\Gamma$ whenever the graph is understood from the context.  We  use notation $\iota$ and $\tau$ without subscripts as their domains are understood from the context. We also implicitly assume they have been defined for any given graph and take that a graph $\Gamma$ is given with the set of vertices and directed edges $(V,E)$. 
A {\em pointed graph} is a pair $(\Gamma, v)$ where $\Gamma$ is a graph and $v\in V$ is a 
 vertex of $\Gamma$, which we call the {\em base vertex}.

Given a graph $\Gamma = (V, E)$  
and a subset $V' \subseteq V$, the {\em 
 subgraph of $\Gamma$ induced by $V'$} is the graph $\Gamma' = (V', E')$  
 where $E' = \{ e\in E \,:\, \iota(e), \tau(e) \in V' \}$. 
%
The {\em order} of $\Gamma$ is the number of vertices, 
 denoted $|\Gamma|=|V|$,  while the {\em size} of $\Gamma$ is the number of edges, denoted $|E|$.

The {\em orientation} of edge $e$ is the ordered pair $(\iota(e), \tau(e))$, while 
 $\iota(e)$ and $\tau(e)$ are its {\em initial} and {\em terminal} {\em endpoints}. We also say that $e$ {\it starts} in $\iota(e)$
 and {\it terminates} at $\tau(e)$.
 
A vertex and an edge are {\em incident} if the vertex is an endpoint of the edge.
Two distinct vertices are {\em adjacent} if they are the endpoints of the same edge, while 
 two edges are {\em adjacent} it they share an endpoint.

A {\em loop} is an edge $e$ such that $\iota(e) = \tau(e)$.
A  pair of distinct edges $e$ and $e'$ such that $\iota(e) = \iota(e')$ 
 and $\tau(e) = \tau(e')$ are called {\em parallel}.

Given a graph $\Gamma$ and a vertex $v \in V$, the {\em inset} of $v$ is
 \[
 \In(v) = \{ e \in E : \tau(e) = v \}
 \]
 and its {\em indegree} is $|\In(v)|$, while the {\em outset} of $v$ is
 \[
 \Out(v) = \{ e \in E : \iota(e) = v \}
  \]
 and its {\em outdegree} is $|\Out(v)|$.
The {\em vertex figure} of a vertex $v$ is the set $\In(v)\cup \Out(v)$ of edges incident to $v$ 
 while the {\em neighborhood} of a vertex $v$ is the set $\nbd(v)\subseteq V$ of vertices 
 adjacent to $v$.

Given a graph $\Gamma$ with vertices $v, v' \in V$, a {\em walk} from $v$ to $v'$ is a 
 finite sequence of vertices and edges $v = v_0, e_1, v_1, e_2, v_2, e_3,\ldots, e_{n}, 
 v_n = v'$, $n\geq 0$, such that for each $i$, $1 \leq i \leq n$, $\{ v_{i-1}, v_{i}\} 
 = \{ \iota(e_{i}), \tau(e_{i}) \}$.
If $v_{i-1} = \iota(e_{i})$ and $v_{i} = \tau(e_{i})$, the walk traverses the edge $e_{i}$ 
 {\em along} $e_{i}$'s orientation, while if $v_{i-1} = \tau(e_{i})$ and $v_{i} = 
 \iota(e_{i})$, the walk traverses the edge $e_{i}$ {\em against} $e_{i}$'s orientation.
The {\em length} of the above walk is $n$, the number of edges traversed (counting 
 repetitions).

The {\em distance} between any two vertices in $\Gamma$ is the minimum length of any 
 walk from one of the vertices to the other.
A graph is {\em connected}, if for any two vertices in the graph, there is a walk from one 
 to the other.

For a walk $w$,  $\iota(w)$ is the first (or {\em initial}) vertex of the walk and 
 $\tau(w)$ is the last (or {\em terminal}) vertex of the walk;  if $\iota(w) = 
 \tau(w)$, then $w$ is {\em cyclic}. A graph is {\it acyclic} if the graph has no cyclic walks.
A walk in which no vertex occurs more than once is a {\em path} (except,  in a cyclic 
 path, the initial and the terminal vertex is the same).

Given two graphs $\Gamma_1 = (V_1, E_1)$ and $\Gamma_2 = (V_2, E_2)$, a {\em homomorphism} $\varphi : \Gamma_1 \to \Gamma_2$ consists 
 of a pair of functions $(\varphi_V ,\varphi_E )$ where $\varphi_V : V_1 \to V_2$ 
 and $\varphi_E : E_1 \to E_2$ such that for any $e\in E_1$,  $\iota(\varphi_E(e)) 
 = \varphi_V(\iota(e))$, and $\tau(\varphi_E(e)) = \varphi_V(\tau(e))$.
We  denote such a homomorphism simply by $\varphi : \Gamma_1 \to 
 \Gamma_2$ and use $\varphi$ without corresponding subscript when referring to 
 $\varphi_V$ or $\varphi_E$.
A bijective homomorphism's inverse (that is when both $\varphi_V$ and $\varphi_E$ 
 are bijections) is a homomorphism, and therefore is an {\em isomorphism}.
An isomorphism from a graph $\Gamma$ to itself is called an {\em automorphism}, and the group of all {\em automorphisms} of $\Gamma$ is denoted by $\Aut(\Gamma)$. For $g\in \Aut(\Gamma)$ acting on a vertex of an edge $x$ of $\Gamma$ we write $g\cdot x=gx$.

 For a given graph $\Gamma$ and a group $G\leq \Aut(\Gamma)$, we say that $G$ {\em acts 
 freely} on $\Gamma$ if, for each vertex $v \in V$, and each $g\not = 1_G\in G$, $gv \not = v$,
 where $1_G$ is the identity in $G$.
As automorphisms preserve the  orientations of edges, if $G$ acts freely on 
 $\Gamma$, then 
 for edge $e \in E$, $ge = e$ if and only if $g=1_G$.
The orbit of a vertex $v \in V$ is denoted with $G(v)$, i.e., $G(v)= \{ gv : g\in G \}$.
Similarly,  $G(e)$ denotes the orbit of an edge $e \in E$. 
The  orbits induce a partition on $V$, since $gu=hv$ implies that $u=g^{-1}hv$, and similarly for the edges.

Given a group $G$ and a subset $X \subseteq G$, $\langle G \rangle$ is the 
 subgroup of $G$ generated by $X$.

\section{Covers}\label{NoLabels}

The {\it quotient graph} $\Gamma / G=( V_{\Gamma/ G},  E_{\Gamma /G} )$
 is the graph whose vertices and edges are orbits of $\Gamma$, where $\iota (G(e)) = 
 G(\iota(e))$ and $\tau(G(e))=G(\tau(e))$.
The {\em canonical projection} $\phi : \Gamma \to \Gamma /G$, given by $\phi(v)  =G(v)$ 
 for $v \in V$ and $\phi(e) = G(e)$ for $e \in E$, is a graph homomorphism.
Following \cite{Sato} and recalling  that an {\em epimorphism} is an onto homomorphism, 
 we have:

\begin{defn}\label{cover_covering}
{\rm Let $\Gamma$ and $\Delta$ be graphs.
An epimorphism $\phi : \Gamma \to \Delta$ is  {\em covering}, 
 and $\Gamma$ is  {\em a cover} of $\Delta$ if, for each $v \in V_{\Gamma}$, the 
 restrictions $\phi{|}_{\Out(v)}$ and $\phi{|}_{\In(v)}$ are bijections.
Given pointed graphs $(\Gamma, v)$ and $(\Delta, u)$, a covering $\phi : \Gamma 
 \to \Delta$ is a {\em pointed covering} from $(\Gamma, v)$ to $(\Delta, u)$ if 
 $\phi(v) = u$.
 }
\end{defn}

The group $G_{\phi} = \{ g \in \Aut(\Gamma) : \phi g = \phi \}$ is the {\em cover group} 
 of the covering $\phi$.
Observe that the identity is always in $G_{\phi}$, hence $G_{\phi}$ is not empty.
The covering is {\em regular} if, for each vertex or edge $x$ of $\Delta$, $G_{\phi}$ acts 
 transitively 
 on $\phi^{-1}(x)$, that is, for each $y,z\in \phi^{-1}(x)$ there is $g\in G_\phi$ such that $gy=z$.
The following is from \cite[\S 5.2]{Sunada}, which we prove for the 
 convenience of the reader.

\begin{lemm}\label{lem:regular}
    If $\phi : \Gamma \to \Delta$ is regular, then $\Delta \cong \Gamma / G_{\phi}$ 
 via the canonical isomorphism $\phi(x) \mapsto G_{\phi}(x)$.
\end{lemm}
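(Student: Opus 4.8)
The plan is to exhibit the map $\psi : \Delta \to \Gamma/G_\phi$ given on vertices and edges by $\psi(x) = G_\phi(\phi^{-1}(x))$ --- more precisely, $\psi(x)$ is the $G_\phi$-orbit of any element of the fiber $\phi^{-1}(x)$ --- and to check that this is a well-defined graph isomorphism whose inverse is the canonical projection composed appropriately. First I would verify that $\psi$ is well defined: by regularity, $G_\phi$ acts transitively on each fiber $\phi^{-1}(x)$, so all elements of $\phi^{-1}(x)$ lie in a single $G_\phi$-orbit, and hence the assignment does not depend on the chosen representative. Next I would check that $\psi$ respects the incidence functions $\iota,\tau$. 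This uses that $\phi$ is a graph homomorphism together with the definition of $\iota,\tau$ on the quotient $\Gamma/G_\phi$: if $e\in\phi^{-1}(x)$ then $\iota(e)\in\phi^{-1}(\iota(x))$ since $\phi(\iota(e))=\iota(\phi(e))=\iota(x)$, so $\iota(\psi(x)) = G_\phi(\iota(e)) = \psi(\iota(x))$, and likewise for $\tau$. So $\psi$ is a graph homomorphism.

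Then I would show $\psi$ is a bijection on vertices and on edges. For surjectivity: every orbit $G_\phi(y)$ in $\Gamma/G_\phi$ is the image of $\phi(y)\in\Delta$ under $\psi$, because $y\in\phi^{-1}(\phi(y))$; here I use that $\phi$ is an epimorphism so $\phi(y)$ is a genuine vertex/edge of $\Delta$, and this is exactly where the ``onto'' part of being a covering is needed. For injectivity: suppose $\psi(x)=\psi(x')$, i.e. the fibers $\phi^{-1}(x)$ and $\phi^{-1}(x')$ meet the same $G_\phi$-orbit; pick $y\in\phi^{-1}(x)$ and $y'\in\phi^{-1}(x')$ with $gy=y'$ for some $g\in G_\phi$. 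Since every $g\in G_\phi$ satisfies $\phi g = \phi$, we get $x = \phi(y) = \phi(gy) = \phi(y') = x'$. Thus $\psi$ is injective. Combining with the homomorphism property, $\psi$ is a bijective homomorphism, hence (by the remark in Section~\ref{preliminaries} that a bijective homomorphism is an isomorphism) an isomorphism, and it is readily checked to be inverse to the map induced by the canonical projection $\phi$, giving $\Delta\cong\Gamma/G_\phi$ via $\phi(x)\mapsto G_\phi(x)$.

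I do not expect a serious obstacle here; the statement is essentially a bookkeeping exercise once regularity is unpacked. The one point that warrants care is confirming that the map is two-sided: one must check that $x\mapsto\psi(x)$ and $G_\phi(y)\mapsto\phi(y)$ are mutually inverse as stated (the latter is well defined precisely because $\phi g=\phi$ for $g\in G_\phi$), and that both respect $\iota$ and $\tau$ in both directions. A secondary subtlety is making sure that the covering condition --- bijectivity of $\phi$ on $\Out(v)$ and $\In(v)$ --- is not actually needed for this particular lemma beyond surjectivity of $\phi$; indeed regularity plus $\phi$ being an epimorphism already suffices, so I would phrase the argument to use only those two facts.
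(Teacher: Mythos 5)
Your proposal is correct and follows essentially the same route as the paper: define the canonical map $\phi(x)\mapsto G_\phi(x)$, use regularity (transitivity of $G_\phi$ on fibers) for well-definedness, the defining property $\phi g=\phi$ for injectivity, and surjectivity of $\phi$ for surjectivity. You are in fact slightly more careful than the paper, since you also verify compatibility with $\iota,\tau$ and check the two-sided inverse, and your observation that only regularity plus surjectivity of $\phi$ (not the local bijectivity of a covering) is needed is accurate.
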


\begin{proof}
   The canonical map $\phi(x)\mapsto G_\phi(x)$ is well defined, because $\phi(x)=\phi(y)$ implies that for all $g\in G_\phi$
$\phi(gx)=\phi(x) =\phi(y)=\phi(gy)$, i.e, $x$ and $y$ have the same orbit, $G_\phi(x)=G_\phi(y)$. Observe that the injectivity holds because $\phi$ is regular, i.e., $G_\phi$ acts transitively on the preimage set $\phi^{-1}(x)$ for every edge, or vertex $x$ of $\Delta$, i.e., $\phi^{-1}(x)$ is precisely an orbit of $x$ with respect to $G_\phi$. Surjectivity follows because $\phi $ is covering.
\end{proof}

For the converse, notice that if $G \subseteq \Aut(\Gamma)$, then 
 the canonical covering $\Gamma \to \Gamma/G$ is regular.

The following bit of lore is an if and only if version of 
 \cite[Theorem 5.2]{Sunada}, and we include a proof for the convenience 
 of the reader.

\begin{prop}\label{FreeCover}
Let $\Gamma$ be a connected graph such that each vertex of $\Gamma$ has finite 
 indegree and outdegree, and let $G \leq \Aut(\Gamma)$.
Then $G$ acts freely on $\Gamma$ if and only if $\Gamma$ is a cover of $\Gamma / G$.
\end{prop}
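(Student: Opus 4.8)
The plan is to argue both implications directly for the canonical projection $\phi : \Gamma \to \Gamma/G$, which is by construction a graph epimorphism; so in each direction the only thing to check against Definition~\ref{cover_covering} is whether the restrictions $\phi{|}_{\Out(v)}$ and $\phi{|}_{\In(v)}$ are bijections onto $\Out(\phi(v))$ and $\In(\phi(v))$ for every vertex $v$. Note first that these restrictions are always surjective: an edge of $\Gamma/G$ in $\Out(G(v))$ is an orbit $G(e)$ with $G(\iota(e)) = G(v)$, so some $h \in G$ carries $\iota(e)$ to $v$, and then $he \in \Out(v)$ with $\phi(he) = G(e)$; the argument for $\In$ is identical with $\tau$ in place of $\iota$. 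Hence in both directions the whole issue is \emph{injectivity} of $\phi$ on in- and out-stars.

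For the forward implication, assume $G$ acts freely. If $e, e' \in \Out(v)$ with $\phi(e) = \phi(e')$, then $e' = g e$ for some $g \in G$, so $v = \iota(e') = g\,\iota(e) = gv$, and freeness forces $g = 1_G$, hence $e = e'$; the same computation with $\tau$ handles $\In(v)$. Thus $\phi{|}_{\Out(v)}$ and $\phi{|}_{\In(v)}$ are injective, hence bijective, for every $v$, so $\phi$ is covering and $\Gamma$ is a cover of $\Gamma/G$.

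For the converse, assume $\phi$ is covering, and suppose $g \in G$ fixes some vertex $v$; I want $g = 1_G$. Since $gv = v$, the automorphism $g$ permutes $\Out(v)$, and for each $e \in \Out(v)$ we have $ge \in \Out(v)$ with $\phi(ge) = G(ge) = G(e) = \phi(e)$; injectivity of $\phi{|}_{\Out(v)}$ then gives $ge = e$, so $g$ fixes $\tau(e)$. Running the symmetric argument through $\phi{|}_{\In(v)}$, we conclude that $g$ fixes every edge incident to $v$ and therefore every vertex in $\nbd(v)$. Now a straightforward induction on $\dist$ from $v$ propagates this through the connected graph $\Gamma$: every vertex is fixed by $g$, and then the same local injectivity argument at each vertex shows every edge is fixed; hence $g = 1_G$ and $G$ acts freely.

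The calculations are light, and the only real content is the propagation step in the converse: recognizing that injectivity of $\phi$ on the in- and out-stars, combined with connectedness, upgrades ``$g$ fixes one vertex'' to ``$g = 1_G$.'' This is where I expect the only mild subtlety to lie, and it is also the place where the hypotheses are actually consumed --- connectedness for the induction, and the finite-indegree/outdegree assumption to guarantee that the stars $\Out(v)$, $\In(v)$ and their images are finite sets, so that ``bijection'' in Definition~\ref{cover_covering} is the intended local-isomorphism condition; the logical skeleton above otherwise goes through verbatim. For edges one may either repeat the star argument at each vertex, as above, or invoke the earlier observation that $ge = e$ already forces $g$ to fix $\iota(e)$ and $\tau(e)$.
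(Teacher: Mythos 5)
Your proof is correct and follows essentially the same route as the paper's: the forward direction uses the orbit relation $e'=ge$ plus freeness to get injectivity on $\Out(v)$ and $\In(v)$, and the converse fixes the edges at a fixed vertex via local injectivity and then propagates by induction on distance through the connected graph. The only differences are cosmetic and in your favor --- you make explicit the surjectivity of $\phi{|}_{\Out(v)}$, $\phi{|}_{\In(v)}$ and the final step that $g$ fixes all edges (not just all vertices), both of which the paper leaves implicit.
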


\begin{proof}
Suppose $G$ acts freely on $\Gamma$ and let $\phi : \Gamma \to \Gamma / G$ be the 
 canonical projection.
Then for any vertex $v \in V_{\Gamma}$, if $e, e' \in \Out(v)$ and 
  $\phi(e) = \phi(e')$, then $G(e) = G(e')$ and there exists $g\in G$ with $ge = e'$.
But then $gv = g\iota(e) = \iota(ge) = \iota(e') = v$. As $G$ acts freely, 
 $g = 1_G$ and $e=e'$.
Thus all outgoing edges of $v$ must be in distinct orbits, so $\phi$ is one-to-one 
 on $\Out(v)$; similarly $\phi$ is one-to-one on $\In(v)$.

Conversely, suppose that $\Gamma$ is a cover of $\Gamma / G$, and let $gv = v$ for some 
 $g \in G$ and $v \in V$.
As $e, e' \in \Out(v)$ implies that $\phi(e) = \phi(e')$ if and 
 only if $e = e'$, the outgoing edges of $v$ must be in distinct 
 orbits; since $gv = v$, for each edge $e$ outgoing from to $v$,  
 $ge = e$.
But then for each $e \in \Out(v)$, $g\tau(e) = \tau(e)$ and for each $e \in \In(v)$, 
 $g\iota(e) = \iota(e)$.
Thus $gv' = v'$ for all 
$v' \in\nbd(v)$.
Proceeding by induction on the distance from $v$, one obtains $gv = v$ for all $v \in V$, 
 so $g$ must be the identity.
 \end{proof}

Consider the following example, from \cite{eon2}.

\begin{exam}\label{noncryst} {\rm
Consider the graph $\Gamma$ in Figure~\ref{nonbary}.
There are two orbits of vertices indicated as 
circles and squares. 
Call a pair of circular vertices {\em good} if they are ``in the same unit'' of four square 
 vertices, i.e., their incoming edges have the same initial vertex and their outgoing edges 
 share the same terminal vertex. 
For any set $S$ of pairs of good  
vertices, there is an automorphism of $\Gamma$ that fixes 
 all the other vertices and swaps the  
 vertices within a good pair in $S$;  thus the 
 automorphism group is uncountable and does not act freely on the graph in 
 Figure~\ref{nonbary}.
Let $G \subseteq \Aut(\Gamma)$ and suppose that $\Gamma / G$ is the graph at right in 
 Figure~\ref{nonbary}. 
Then for any pair of round vertices in the same unit, there is an element in $g \in G$ that 
 swaps them; notice that any such element must fix the two square vertices incident 
 to the two round vertices in that unit; it may fix or switch the other two vertices. Hence $G$ does not act freely and we
 observe that $\Gamma$ does not cover $\Gamma/G$, the graph to the right (the degrees of vertices do not match).  
}

\begin{figure}[ht]
\begin{center}
\mbox{
\centerline{
\includegraphics*[width=1.8in, angle=0]{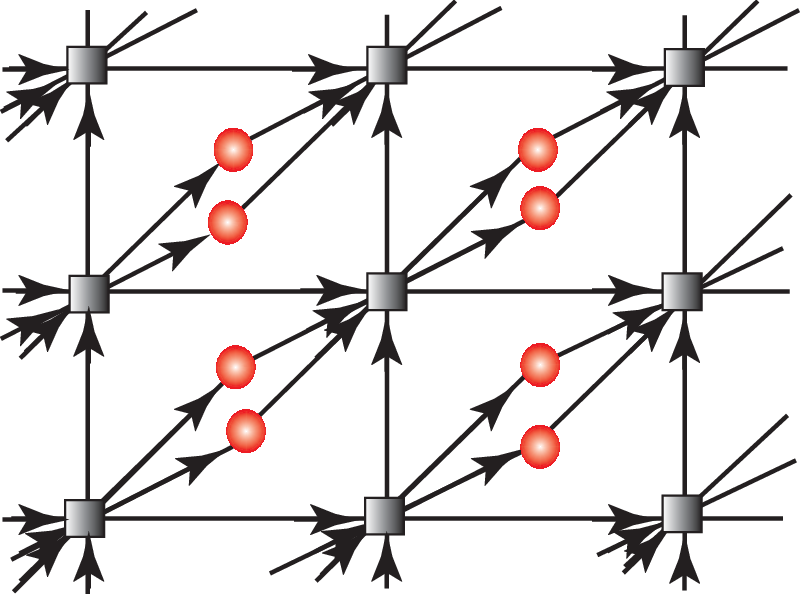}
\hskip 1in
\includegraphics*[width=1.5in, angle=0]{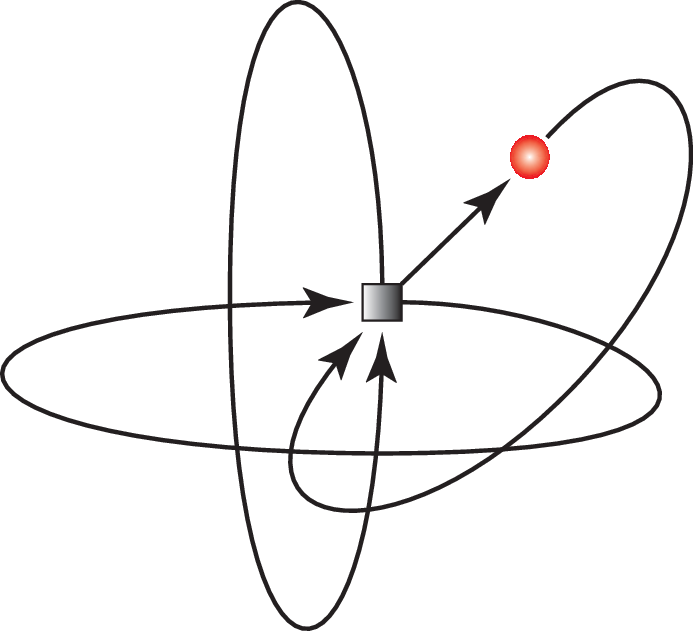}
}
}
\end{center}
\caption{A periodic graph $\Gamma$ with a covering for a group that does not act 
          freely.
 }
\label{nonbary} 
\end{figure}
\end{exam}

A (graph) {\em embedding} is a graph homomorphism that is one-to-one on 
 vertices and edges. Given a subgraph $\Lambda$ of a graph $\Gamma$, 
 the {\em inclusion map} is the embedding of $\Lambda$ into $\Gamma$.

\begin{defn}\label{lifts}
{\rm
Let $\Delta$ be connected and $\phi : \Gamma \to \Delta$ be a covering. For any edge or vertex of $\Delta$ the {\rm fiber of $x$} is the set $\phi^{-1}(x)$, and an element of $\phi^{-1}(x)$ is a {\em lift} of $x$. 
A {\em lift} of $\Delta$ is a connected subgraph $\Lambda\le \Gamma$ of $\Gamma$ such that $\phi$ restricted to the edges of $\Lambda$ is bijective,  inducing a homomorphism 
$\phi|_{\Lambda}:\Lambda\to \Delta$. 
}
\end{defn}

\begin{defn}
{\rm 
Given a graph $\Delta = \langle V, E\rangle$, 
we call an edge set $S \subseteq E$ {\em acyclic} if the subgraph induced by $S$ is acyclic; we say it is a 
 {\em tree set} if $\Delta {|}_S$ is acyclic and connected; and we say it is a {\em spanning tree 
 set} if it is acyclic, connected and contains all vertices of $\Delta$.
 }
\end{defn}

Given a covering $\phi : \Gamma \to \Delta$ 
we say that a lift 
 $\Lambda$ is  {\em proper} if there is an edge set $S_0$ of $\Lambda$ such that $\phi(S_0)$ is a spanning tree set $S$ in $\Delta$ 
 and every edge of $\Lambda$ has its initial vertex in the vertex set of $S_0$.
In this case the vertex set of $S_0$ contains a unique element from 
 every vertex orbit. We call the vertex set of $S_0$ (subset of vertices of $\Lambda$) a set of {\it interior} vertices and denote it with $V_\Lambda^0$, and the subgraph of $\Lambda $ induced by the interior vertices, the {\it interior} of $\Lambda$, denoted $\Lambda^0$.
 The subset of $\Lambda$ 
 that contains the complement of $V_\Lambda^0$, the remaining vertices and edges of $\Lambda$, forms $\partial \Lambda$, called the {\it the boundary of $\Lambda$ 
 with respect to $S_0$}.
The boundary edges have interior initial vertices but not interior terminal vertices. 
A given lift might have several choices for the 
 interior and 
 boundary subgraphs, 
 depending on the choice of $S_0$.

We adapt a bit of lore from \cite{JKM}.

\begin{obse}\label{ProperLift}
Given a connected graph $\Delta$, with a spanning tree $S$, and a covering $\phi : \Gamma \to \Delta$, 
  there exists a proper lift of 
 $\Delta$ that contains a tree set that is a lift of $S$.
\end{obse}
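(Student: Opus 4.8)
The plan is to build the proper lift by a breadth-first construction that simultaneously constructs the lift of the spanning tree $S$ and the boundary edges hanging off it. First I would fix a base vertex $u \in V_\Delta$ and a lift $\tilde u \in \phi^{-1}(u)$, and then use the covering property to lift $S$ edge-by-edge. Concretely, since $S$ is a spanning tree, every vertex $v$ of $\Delta$ is joined to $u$ by a unique reduced path in $S$; I would lift that path starting at $\tilde u$, using at each step the fact that $\phi$ restricts to a bijection on $\Out(\cdot)$ and $\In(\cdot)$ so that each successive edge (traversed along or against its orientation) has a unique lift with the prescribed initial vertex. This yields a well-defined vertex $\tilde v$ for each $v$ and a lift $S_0$ of $S$; connectedness of $S_0$ is immediate since it is the homomorphic-section image of a tree, and $\phi|_{S_0}\to S$ is a bijection on edges by construction, so $S_0$ is a tree set lifting $S$.

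Next I would enlarge $S_0$ to a full lift $\Lambda$ of $\Delta$. For each edge $f \in E_\Delta \setminus S$, with $\iota(f) = v$, the bijectivity of $\phi|_{\Out(\tilde v)}$ supplies a unique edge $\tilde f \in \Out(\tilde v)$ with $\phi(\tilde f) = f$; let $\Lambda$ be the subgraph of $\Gamma$ consisting of all the vertices and edges of $S_0$ together with these edges $\tilde f$ and their terminal vertices $\tau(\tilde f)$. Then $\phi$ restricted to the edges of $\Lambda$ is a bijection onto $E_\Delta$: it is injective because distinct edges of $\Delta$ either lie in $S$ (handled by $S_0$) or are chosen with distinct $\phi$-images, and an edge of $S_0$ cannot be one of the $\tilde f$ since their $\phi$-images lie outside $S$; it is surjective by construction. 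Hence $\Lambda$ is a lift of $\Delta$ in the sense of Definition~\ref{lifts}, and it is connected because every vertex and edge is attached (through the edges $\tilde f$ and through $S_0$) to the connected subgraph $S_0$.

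Finally I would verify properness. Take $S_0$ itself as the witnessing edge set: $\phi(S_0) = S$ is a spanning tree set of $\Delta$ by hypothesis, and by construction every edge of $\Lambda$ has its initial vertex among $\{\tilde v : v \in V_\Delta\}$, which is exactly the vertex set of $S_0$ — for the edges of $S_0$ this is clear, and for each $\tilde f$ we chose it in $\Out(\tilde v)$ with $\tilde v$ a vertex of $S_0$. Thus $\Lambda$ is a proper lift whose interior $\Lambda^0$ is (a copy of) $S_0$, and it contains the tree set $S_0$ lifting $S$, as required.

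\textbf{Main obstacle.} The constructions themselves are routine applications of the covering bijections; the one point that needs care is checking that the lifting of the tree is genuinely well-defined — i.e., that traversing $S$ along different routes to the same vertex cannot arise (it cannot, since $S$ is a tree, so there is a unique reduced path) and that edges traversed \emph{against} their orientation are lifted correctly using $\phi|_{\In(\cdot)}$ rather than $\phi|_{\Out(\cdot)}$. Once that bookkeeping is set up cleanly, the rest is immediate.
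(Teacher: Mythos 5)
Your construction is correct and essentially the paper's own: lift the spanning tree $S$ from a chosen base vertex using the covering bijections, then attach, for each remaining edge of $\Delta$, its unique lift emanating from the corresponding tree vertex — the paper merely performs the tree-lifting by induction on edges of $S$ (lifting a minimal path to each new edge) rather than by lifting the unique tree-path to every vertex, an inessential difference. Only your parenthetical claim that the interior $\Lambda^0$ equals $S_0$ is slightly off, since the interior may also contain lifts of non-tree edges whose terminal vertex happens to land in the lifted tree's vertex set, but the observation only requires that the proper lift contain a tree set lifting $S$, which yours does.
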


\begin{proof}
A proper lift $\Lambda$ can be constructed as follows. Given a spanning tree set $S \subseteq E_{\Delta}$ we fix a vertex  $v_0 \in V_{\Delta}$ and $v_0^* \in V_{\Gamma}$ in the fiber of 
$v_0$. 
We lift edges and vertices of $\Delta$ inductively  as follows.

Suppose that we have thus far lifted a connected subgraph 
 $\Lambda'$ from $\Delta$ such that $\phi(E_{\Lambda'}) =S' \subseteq S$.
If $S' 
\neq S$,  there exists an edge 
 $e \in S \backslash S'$ that is incident to a 
 vertex in $\phi(V_{\Lambda'})=V_{S'}$.
Let $p=qe$ be a path of minimal length from 
 $v_0$ through $S'$ to an endpoint of $e$ where $q$ is the path through $S'$. We lift $p$
 to a path $p^*=q^*e^*$ that starts at $v_0^*$ such that the lift $q^*$ of $q$ is in $\Lambda'$. This path exists since $\Lambda'$ is a lift of $S'$. Then the edge $e^*$ is a lift of $e$ and $\Lambda'\cup \{e^*\}$ is a lift of $S'\cup \{e\}$.

Inductively, one obtains $\Lambda'$ that is a lift of $S$. 
Then $V_{\Lambda'}$ intersects each orbit of vertices exactly once.
Add to $\Lambda'$ each edge of $\Gamma$ whose initial vertex is 
 in $V_{\Lambda'}$ -- notice that no two such edges are mapped by 
 $\phi$ to the same edge of $\Delta$ - and their incident vertices 
 to obtains $\Lambda$.
As every edge in $\Delta$ has both of its endpoints incident to 
 edges of $S$, $\Lambda$ is a lift of $\Delta$, 
whose interior edges, by construction,  include a tree that is a lift  of $S$. 
\end{proof}

We call  $\Lambda$ described in Observation~\ref{ProperLift} a {\em lift of $\Delta $ with respect to $S$}.

If $\Delta=\Gamma/G$ for some $G\le \Aut(\Gamma)$ acting freely on $\Gamma$, then for every vertex or edge $x$ in $\Delta$, the fiber of $x$ coincides with the orbit of $x$. Let  $\Lambda$ be a proper lift of $\Delta$, then $\Lambda$ contains a single element from every edge orbit.
Thus, we partition the edge set $E_{\Lambda}$ into $\partial 
 E_{\Lambda}$, the set containing edges whose terminal endpoints are in $\partial V_{\Lambda}$, and $E_{\Lambda}^0$, the set containing edges whose both endpoints are in the interior $V_{\Lambda}^0$.
Observe that for any $g \in G$, $g\Lambda$ with vertices 
 $gV_{\Lambda}$ and edges $gE_{\Lambda}$ is also a lift of $\Delta$ (and $g\Lambda$ is proper iff 
$\Lambda$ is), and the set $\{g\Lambda\,|\, g\in G\}$ is the set of lifts that are edge disjoint and whose union is $\Gamma$. Moreover, if $\Lambda$ is proper and  $e\in \partial E_{\Lambda}$
then one endpoint of $e$ is in $V_{\Lambda}^0$ while the other is in $gV_{\Lambda}^0$ for some $g\in G$. In other words, for each $v\in \partial V_{\Lambda}$ there is a $g\in G$ such that $v\in gV_{\Lambda}^0\cap \partial V_{\Lambda}$; thus, $\{ gV_{\Lambda}^0\,|\, g\in G\}$ is a partition of the vertices of $\Gamma$. 

An example of a covering $\phi:\Gamma \to \Delta$ and a lift $\Lambda$ with inclusion map $\xi$ is depicted in Figure~\ref{lift_graphs}. The  map  $\psi:\Lambda\to \Delta$ is the bijection on the edges. Relative to the spanning tree set of red edges of $\Delta$ and $\Lambda$, the interior vertices of $\Lambda$ are indicated with black dots, while the boundary vertices are with unfilled dots. This interior lift is proper.
\begin{figure}[ht]
\begin{center}
$\Delta$
\begin{overpic}
[width = 3.5in]{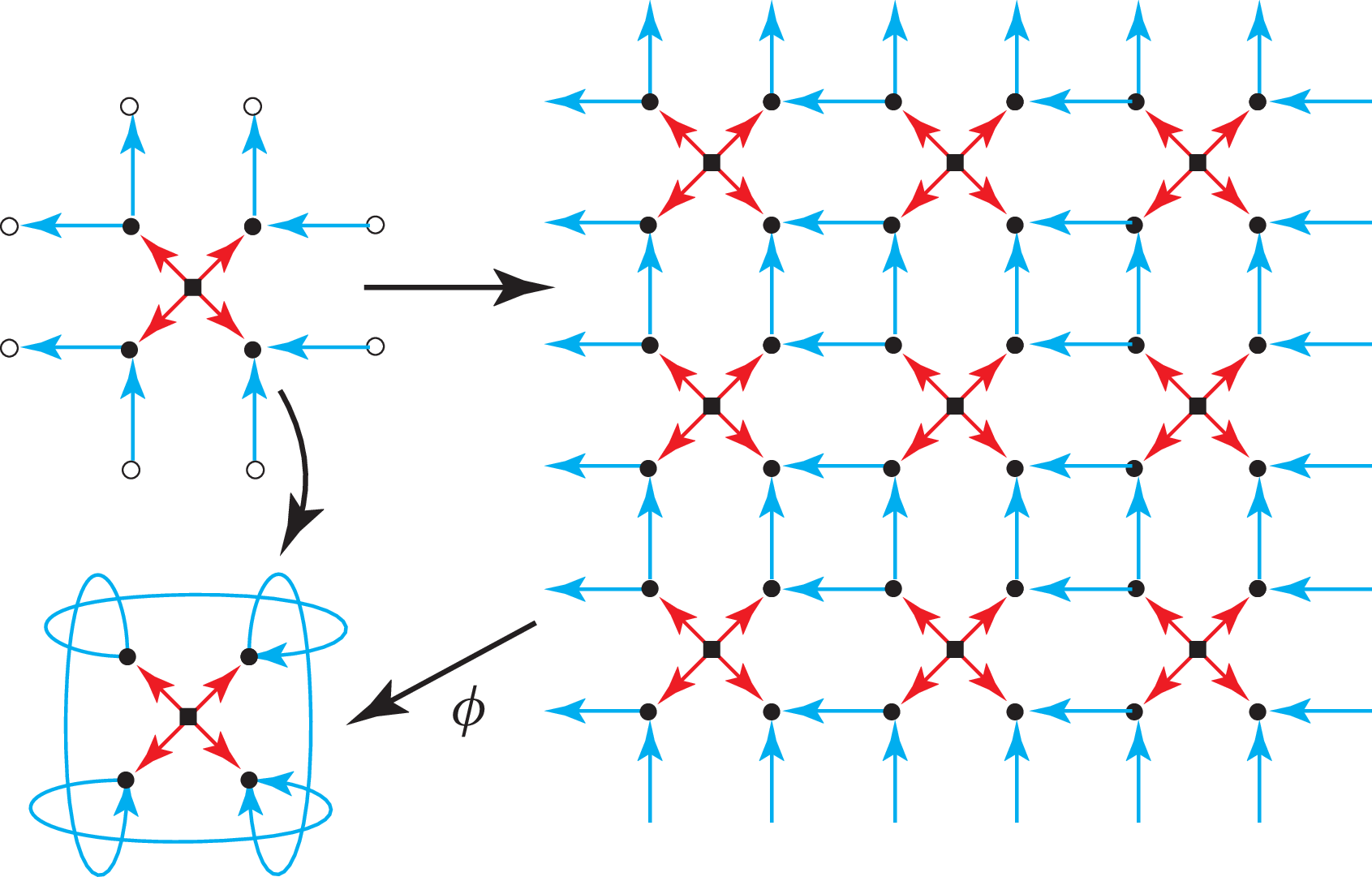}
\put(-5,50){$\Lambda$}
\end{overpic}$\Gamma$
\caption{ A lift graph constructed from the covering $\phi:\Gamma\to \Delta$. The lift graph $\Lambda$ is upper left; the unfilled dots are the boundary  vertices, the black dots are interior. 
}
\label{lift_graphs}
\end{center}
\end{figure}

Suppose $\phi:\Gamma\to\Delta$ is a regular covering and $\Delta$ 
 is connected.
Given a spanning tree set $S$ of $\Delta$, a proper lift $\Lambda$ with respect to $S$ and letting $V_{\Lambda}^0$ be the set of interior vertices of 
 $\Lambda$, the sets $gV_{\Lambda}^0$, 
 $g \in G_{\phi}$ `tile' the vertex set $V_{\Gamma}$ 
 in the sense that every vertex of 
 $\Gamma$ is incident to exactly one lift of $S$.

Let $\phi : \Gamma \to \Delta$ be a regular covering. 
Let $\Lambda$ be a proper lift of $\Delta$ and let $V_{\Lambda}^0$ be 
 the set of $\Lambda$'s interior vertices with respect to some spanning 
 tree set of $\Delta$.
We claim that no two lifts $g_1 V_{\Lambda}^0$, $g_2 V_{\Lambda}^0$ intersect 
 for distinct $g_1, g_2 \in G_{\phi}$.
Towards contradiction, suppose that there exists $v \in gV_{\Lambda}^0 \
 \cap hV_{\Lambda}^0$; then $g^{-1}v, h^{-1}v \in V_{\Lambda}^0$.
As $\phi$ is one-to-one on $V_{\Lambda}^0$ and $\phi(g^{-1}v) = \phi(v) 
 = \phi(h^{-1}v)$, $g^{-1}v = h^{-1}v$.
As $g$ acts freely on $\Gamma$, $g = h$ after all.
Thus $\{ gV_{\Lambda}^0 : g \in G_{\phi} \}$ consists of mutually disjoint 
 sets, and the question becomes: when does it cover $V_{\Gamma}$, i.e., 
 when is it a partition of $V_{\Gamma}$?

\begin{obse}\label{partition}
    Let $\phi : \Gamma \to \Delta$ be a covering and let $\Lambda$ be a proper lift of $\Delta$. Then 
     $\{ gV_{\Lambda}^0 : g \in G_{\phi} \}$ is a partition of $V_{\Gamma}$ 
     if and only if $\phi$ is regular.
\end{obse}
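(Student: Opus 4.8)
The plan is to prove the biconditional in Observation~\ref{partition} by establishing the two implications separately, using the already-developed structure of proper lifts and the free action of the cover group $G_\phi$ on $\Gamma$ (recall $\phi$ covering implies, by Proposition~\ref{FreeCover}, that $G_\phi$ acts freely on $\Gamma$). Throughout, fix a spanning tree set $S$ of $\Delta$, a proper lift $\Lambda$ with respect to $S$, and the interior vertex set $V_\Lambda^0$, which meets each fiber $\phi^{-1}(u)$ in at most one vertex and in exactly one vertex for each $u\in V_\Delta$ in the fiber covered by $\Lambda$'s lift of $S$. The disjointness half of the statement is essentially already recorded in the paragraph preceding the observation: for distinct $g_1,g_2\in G_\phi$, if $v\in g_1V_\Lambda^0\cap g_2V_\Lambda^0$ then $g_1^{-1}v,g_2^{-1}v\in V_\Lambda^0$ lie in the same fiber (since $\phi g_i=\phi$), hence are equal by injectivity of $\phi$ on $V_\Lambda^0$, whence $g_1=g_2$ by freeness --- contradiction. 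So disjointness holds \emph{regardless} of regularity, and what the biconditional really concerns is whether the union $\bigcup_{g\in G_\phi} gV_\Lambda^0$ exhausts $V_\Gamma$.

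For the direction ``$\phi$ regular $\implies$ partition'': take any $v\in V_\Gamma$. Then $\phi(v)\in V_\Delta$, and since $V_\Lambda^0$ is a lift of the spanning tree $S$ of the connected graph $\Delta$, there is a (unique) vertex $w\in V_\Lambda^0$ with $\phi(w)=\phi(v)$, i.e.\ $v$ and $w$ lie in the same fiber $\phi^{-1}(\phi(v))$. Because $\phi$ is regular, $G_\phi$ acts transitively on this fiber, so there exists $g\in G_\phi$ with $gw=v$; hence $v\in gV_\Lambda^0$. Together with the disjointness above, this shows $\{gV_\Lambda^0:g\in G_\phi\}$ is a partition of $V_\Gamma$.

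For the converse, ``partition $\implies$ $\phi$ regular'': I must show $G_\phi$ acts transitively on each fiber. Let $u\in V_\Delta$ and let $y,z\in\phi^{-1}(u)$ be two lifts of $u$. Since $\{gV_\Lambda^0:g\in G_\phi\}$ partitions $V_\Gamma$, there exist $g,h\in G_\phi$ with $y\in gV_\Lambda^0$ and $z\in hV_\Lambda^0$, so $g^{-1}y,h^{-1}z\in V_\Lambda^0$. Now $\phi(g^{-1}y)=\phi(y)=u=\phi(z)=\phi(h^{-1}z)$, and since $\phi$ restricted to $V_\Lambda^0$ is injective, $g^{-1}y=h^{-1}z$, i.e.\ $z=(hg^{-1})y$ with $hg^{-1}\in G_\phi$. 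This proves transitivity on vertex fibers; transitivity on edge fibers then follows because a covering is a local bijection on insets and outsets: given edges $e,e'$ in the same fiber $\phi^{-1}(\tilde e)$ and the element $g\in G_\phi$ moving $\iota(e)$ to $\iota(e')$, the edge $ge$ lies in $\Out(\iota(e'))$ and satisfies $\phi(ge)=\phi(e)=\tilde e=\phi(e')$, so $ge=e'$ by injectivity of $\phi$ on $\Out(\iota(e'))$. Hence $\phi$ is regular.

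The main obstacle, and the only place requiring care, is the converse direction: one must check that the partition hypothesis genuinely forces \emph{transitivity} on fibers rather than merely that fibers are covered by finitely many orbits, and in particular that the group element $hg^{-1}$ produced indeed lies in $G_\phi$ and indeed carries $y$ to $z$ --- this is where the injectivity of $\phi$ on $V_\Lambda^0$ (a consequence of $\Lambda$ being a \emph{proper} lift, so that $V_\Lambda^0$ is a transversal for the fibers meeting it) does the essential work. Upgrading vertex-transitivity to edge-transitivity is routine given Definition~\ref{cover_covering}, but should be spelled out since ``regular'' was defined in terms of both vertices and edges.
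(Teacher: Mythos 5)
Your proof is correct and follows essentially the same route as the paper's: the same use of injectivity of $\phi$ on $V_\Lambda^0$ in both directions, disjointness via freeness of $G_\phi$ exactly as in the paragraph preceding the observation, and your explicit upgrade from vertex-fiber to edge-fiber transitivity merely spells out what the paper compresses into ``the argument extends to edges.'' One small quibble: the freeness of $G_\phi$ is not literally the statement of Proposition~\ref{FreeCover} (which concerns the canonical projection $\Gamma\to\Gamma/G$); it follows instead from the argument in the converse half of that proposition's proof applied to the covering $\phi$ (with $\Gamma$ connected).
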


\begin{proof}
Suppose that $\{ gV_{\Lambda}^0 : g \in G_{\phi} \}$ is a partition 
 of $V_{\Gamma}$, and hence covers $V_{\Gamma}$.
If $v \in V_{\Delta}$ and $v_1, v_2 \in \phi^{-1}v$, then for some 
 $g_1, g_2 \in G_{\phi}$, $v_1 \in g_1 V_{\Lambda}^0$ and $v_2 \in g_2 
 V_{\Lambda}^0$.
As $\phi$ is one-to-one on $V_{\Lambda}^0$ and $\phi v_1 = \phi v_2$, 
 $g_1^{-1} v_1 = g_2^{-1} v_2$ and thus $g_2 g_1^{-1} v_1 = v_2$.
The argument extends to edges, and repeating for all such $v$, $v_1$, 
 and $v_2$, $G_{\phi}$ acts transitively on the fibers of $\Delta$, 
 and hence $\phi$ is regular.

Conversely, suppose that $\phi$ is regular; we claim that each vertex of $\Gamma$
 is in some lift $gV_{\Lambda}^0$.
Given $v \in V_{\Gamma}$, as $\phi$ maps $V_{\Lambda}^0$ onto $V_{\Delta}$ and is transitive on $\Gamma$,
 there exists $u \in V_{\Lambda}^0$ such that $\phi u = \phi v$
and there exists $g \in G_{\phi}$ such that 
 $g u = v$.
Hence $v \in gV_{\Lambda}^0$, and we are done.
\end{proof}

As an example, consider Figure~\ref{spanning} which contains the same covering $\phi:\Gamma\to \Delta$ as in Figure~\ref{lift_graphs}.
The figure shows two copies of $\Gamma$,  one to the left and one to the right, while $\Delta $ is depicted with two copies  in the middle. 
Two spanning tree sets of $\Delta$ are indicated with red colored edges in the middle of the figure; the lifts of these spanning 
 tree sets are shown with red colored edges in $\Gamma$. 
Using the methods of \cite{GT} and \cite{Sunada}, can derive $\Gamma$ from 
 $\Delta$ and $\bbbZ^2\le \Aut(\Gamma)$.
 
\begin{figure}[ht]
\begin{center}
\includegraphics*[width = 5in]{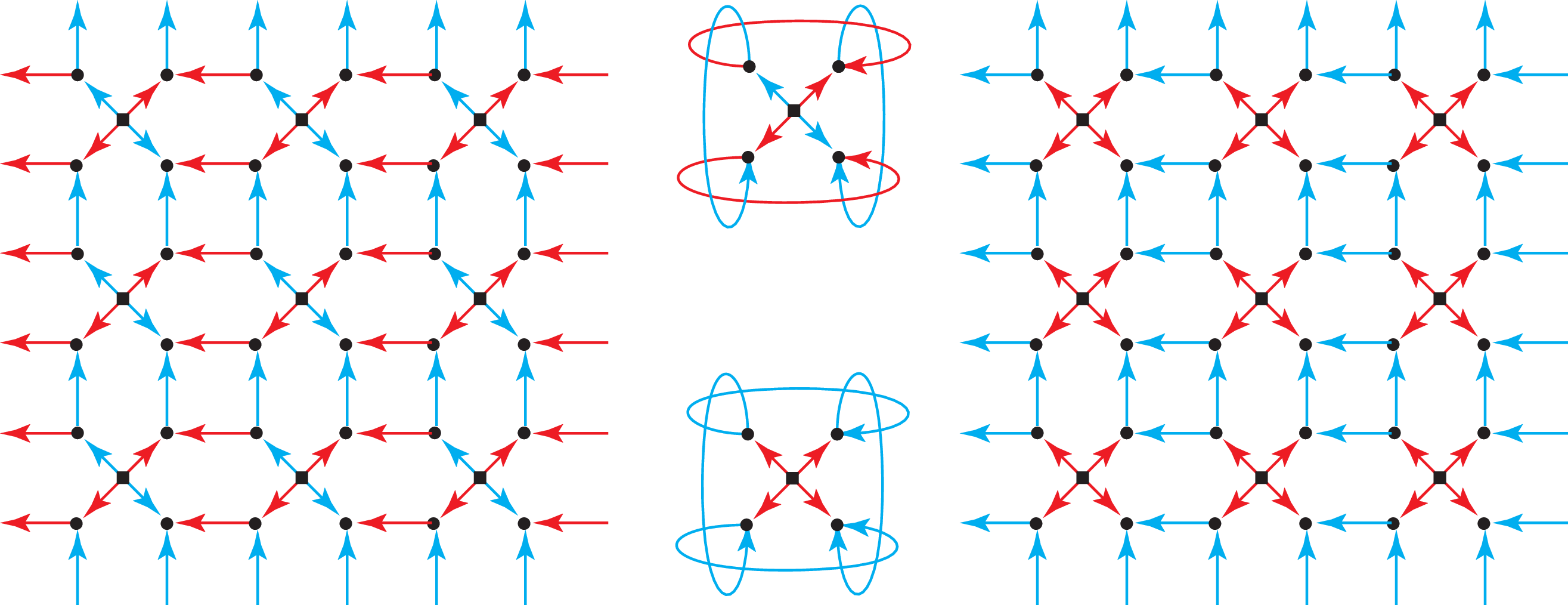}
\caption{Two representations of $\Gamma$ left and right, with two representations of the quotient graph $\Delta$ in the middle.
The middle graphs have different spanning tree sets indicated with red edges. 
The spanning tree set of the upper middle graph is lifted  to the trees of the left depiction of $\Gamma$;  
the lower middle graph spanning tree set is lifted  to the red edge set of $\Gamma$ to the  right.}
\label{spanning}
\end{center}
\end{figure}

Let $\Delta$ be a connected graph with a spanning tree set $S$, and let $v_0 \in 
 V_{\Delta}$.
For each $e \in E_{\Delta} \backslash S$, the {\em return walk of $e$ from $v_0$ 
 via $S$} is $p_e = 
 p_1 e p_2$ where 
 $p_1$ is the unique path in $S$ from $v_0$ to $\iota_{\Delta}(e)$ and $p_2$ is 
 the unique path in $S$  from $\tau_{\Delta}(e)$ to $v_0$.
Similarly, recalling that the reversal of a path $q$ is denoted 
 $\bar q$, the {\em return walk of $e$ from $v_0$ is $\overline{p_e} = \overline{p_2} \bar e \,\overline{p_1}$.}

\begin{defn}\label{tree_voltage}
{\rm
Let $\Delta$ be a connected graph with a spanning tree set $S$.
Given a covering $\phi : \Gamma \to \Delta$, two lifts $S'$ and $S''$ of a spanning 
 tree set $S$ for $\phi$ are {\em adjacent via edge $e$} if $e$ is an edge in $\Gamma$ with one 
 endpoint in $S'$ and the other in $S''$.
 }
\end{defn}

Given a regular covering $\phi : \Gamma \to \Delta$, a vertex $v_0 \in 
 V_{\Delta}$, and two adjacent lifts $S'$ and $S''$ (via  an edge $e$) of a spanning tree set $S$ of $\Delta$, there are lifts $v_0'$ and $v_0''$ of 
 $v_0$ incident to edges of $S'$ and $S''$, respectively,
and there there is a lift $p_e'$ of $p_e$ that is a path from $v_0'$ to $v_0''$.
As $G_{\phi}$ acts freely,  there is
 exactly one $g \in G_{\phi}$ such that $g(S') = S''$, which is the only element of 
 $G$ that satisfies $gv_0' = v_0''$.

The following are a consequence of standard properties of covering spaces, 
 e.g., of \cite[Theorem 5.1]{Sunada}, when $\Gamma$ and $\Delta$ are considered 
 as $1$-complexes.

\begin{lemm}\label{lem:lift-path}
Let $\phi : \Gamma \to \Delta$ be a covering, let $v \in V_{\Gamma}$, and let 
 $p$ be a walk in $\Delta$ whose initial vertex is $\phi(v)$.
Then there exists exactly one walk $q$ in $\Gamma$ whose initial vertex is 
 $v$ and $\phi(q) = p$.
\end{lemm}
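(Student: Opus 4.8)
The plan is to prove this by induction on the length $n$ of the walk $p$, exploiting the defining property of a covering: that $\phi$ restricts to bijections $\phi|_{\Out(v)} : \Out(v) \to \Out(\phi(v))$ and $\phi|_{\In(v)} : \In(v) \to \In(\phi(v))$ for every vertex $v$ of $\Gamma$. First I would dispose of the base case $n = 0$, where $p$ is the trivial walk at $\phi(v)$: its only lift starting at $v$ is the trivial walk $q = v$, since a length-zero walk is determined by its (initial, hence also terminal) vertex, and $\phi(v) = \phi(v)$.

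For the inductive step I would write $p = u_0, e_1, u_1, \ldots, e_n, u_n$ with $u_0 = \phi(v)$, apply the inductive hypothesis to the initial segment $p' = u_0, e_1, \ldots, e_{n-1}, u_{n-1}$ to obtain the unique lift $q'$ of $p'$ starting at $v$, ending at some vertex $w$ with $\phi(w) = u_{n-1}$, and then lift the single remaining edge $e_n$. Here I would split into two cases according to whether $p$ traverses $e_n$ along or against its orientation. If along, then $\iota(e_n) = u_{n-1} = \phi(w)$, so $e_n \in \Out(\phi(w))$, and the bijection $\phi|_{\Out(w)}$ supplies a unique edge $f \in \Out(w)$ with $\phi(f) = e_n$; if against, then $\tau(e_n) = u_{n-1} = \phi(w)$, so $e_n \in \In(\phi(w))$, and $\phi|_{\In(w)}$ supplies the unique such $f \in \In(w)$. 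Appending $f$, together with its appropriate endpoint $\tau(f)$ (resp.\ $\iota(f)$) — which lies over $u_n$ because $\phi$ preserves $\iota$ and $\tau$ — to $q'$ yields a walk $q$ in $\Gamma$ with $\phi(q) = p$ and initial vertex $v$, establishing existence.

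For uniqueness I would take an arbitrary walk $q$ in $\Gamma$ with $\phi(q) = p$ and initial vertex $v$, observe that its initial length-$(n-1)$ segment is a lift of $p'$ starting at $v$ and hence equals $q'$ by the inductive hypothesis (so its vertex after $n-1$ steps is the $w$ above), and then note that the last edge $f$ of $q$ satisfies $\phi(f) = e_n$ and is traversed by $q$ in the same direction in which $p$ traverses $e_n$ — again because $\phi$ preserves endpoints. Thus $f$ lies in $\Out(w)$ or in $\In(w)$ accordingly, and injectivity of $\phi$ on that set forces $f$ to coincide with the edge produced in the existence argument; its endpoint in $q$ is then forced as well, so $q$ equals the walk constructed above.

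I do not expect a serious obstacle: this is the standard unique path-lifting property of covering spaces, and in the combinatorial setting it is entirely elementary. The only points requiring care are the bookkeeping for the two traversal directions — the paper's graphs are digraphs but a walk may run either way along an edge — and carrying existence and uniqueness through the same induction so that the inductive hypothesis is available for both parts.
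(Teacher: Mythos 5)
Your proof is correct. The paper itself does not prove this lemma: it simply cites it as a consequence of standard covering-space theory (Theorem 5.1 of Sunada), viewing $\Gamma$ and $\Delta$ as $1$-complexes, so there is no in-paper argument to compare against step by step. Your induction on the length of $p$, lifting one edge at a time via the bijections $\phi|_{\Out(w)}$ and $\phi|_{\In(w)}$ and running existence and uniqueness through the same induction, is exactly the standard unique-walk-lifting argument transcribed into the combinatorial setting, and you handle correctly the one point that the topological citation glosses over in this paper's digraph framework, namely that a walk may traverse an edge against its orientation, which is why both the out-set and in-set bijections in Definition~\ref{cover_covering} are needed. The case split in your inductive step (along versus against the orientation of $e_n$), together with the observation that $\phi$ preserves $\iota$ and $\tau$ so the direction of traversal is itself preserved, is precisely what makes the uniqueness half go through; there is no gap. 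In short, you supply an elementary self-contained proof of a fact the paper imports by reference, and the two are compatible: your argument is the combinatorial content of the cited topological theorem.
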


Observe that as this lemma applies to all walks in $\Delta$, it applies to 
 initial subwalks.

\begin{prop}\label{prop:upwards}
Given a connected graph $\Gamma$, and finite graphs $\Delta_1$ and 
 $\Delta_2$, with regular coverings $\phi_i : \Gamma \to \Delta_i$, 
 $i = 1, 2$, there exists a connected graph $\Delta_{\top}$, and 
 coverings $\nu : \Gamma \to \Delta_{\top}$ and 
 $\mu_i : \Delta_{\top} \to \Delta_i$, $i =1,2$, such that $\phi_i = 
 \mu_i \nu$.
Moreover,  for any $\Delta$ and coverings $\nu' : \Gamma \to \Delta$ 
 and $\mu_i' : \Delta \to \Delta_i$, $i=1,2$, satisfying $\phi_i = 
 \mu_i' \nu'$, there exists a covering $\psi : \Delta \to \Delta_{\top}$ 
 so that the diagram in Figure~\ref{fig:upwards} commutes.

Furthermore, $|\Delta_{\top}| \leq |\Delta_1| \, |\Delta_2|$ and the 
 cover group of $\nu$ is $G_{\phi_1} \cap G_{\phi_2}$.
\end{prop}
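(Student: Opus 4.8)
The plan is to build $\Delta_{\top}$ directly as a quotient of $\Gamma$ by the group $H \eq G_{\phi_1} \cap G_{\phi_2}$, and then verify all the claimed properties against this concrete model.

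\textbf{Construction of $\Delta_{\top}$.} First I would set $H = G_{\phi_1} \cap G_{\phi_2} \leq \Aut(\Gamma)$ and let $\nu : \Gamma \to \Gamma/H =: \Delta_{\top}$ be the canonical projection. Since $H$ is a subgroup of $\Aut(\Gamma)$, the canonical covering $\Gamma \to \Gamma/H$ is regular by the remark following Lemma~\ref{lem:regular}; in particular $\nu$ is a covering and $\Delta_{\top}$ is connected because $\Gamma$ is. Next I would produce the maps $\mu_i : \Delta_{\top} \to \Delta_i$. Since $H \leq G_{\phi_i}$, any two vertices (or edges) $x, y$ of $\Gamma$ in the same $H$-orbit satisfy $\phi_i(x) = \phi_i(y)$ (as $\phi_i g = \phi_i$ for $g \in H$), so $\phi_i$ factors through $\nu$: define $\mu_i(H(x)) = \phi_i(x)$, well-defined by the above. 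One then checks $\mu_i$ is a covering: it is onto since $\phi_i$ is, and the local bijectivity on insets/outsets follows by a diagram chase from the fact that $\nu$ and $\phi_i$ are both coverings (given $w \in V_{\Delta_{\top}}$, lift to $v \in V_\Gamma$ with $\nu(v)=w$; then $\nu$ restricts to a bijection $\Out(v) \to \Out(w)$ and $\phi_i$ restricts to a bijection $\Out(v) \to \Out(\phi_i(v))$, and $\mu_i$ on $\Out(w)$ is the composite of the inverse of the first with the second). By construction $\mu_i \nu = \phi_i$.

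\textbf{The universal property.} Suppose $\nu' : \Gamma \to \Delta$ and $\mu_i' : \Delta \to \Delta_i$ are coverings with $\mu_i' \nu' = \phi_i$. I want a covering $\psi : \Delta \to \Delta_{\top}$ with $\psi \nu' = \nu$ and $\mu_i \psi = \mu_i'$. The natural definition is: for a vertex or edge $y$ of $\Delta$, pick any $x \in \Gamma$ with $\nu'(x) = y$ and set $\psi(y) = \nu(x)$. For well-definedness I must show $\nu'(x) = \nu'(x') \Rightarrow \nu(x) = \nu(x')$. Here is the key point: $\nu'(x) = \nu'(x')$ means $x, x'$ lie in the same fiber of $\nu'$; I need $\nu' $ to be regular so that $x' = g x$ for some $g \in G_{\nu'}$, and then I need $G_{\nu'} \leq H$. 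The second inclusion holds because $g \in G_{\nu'}$ gives $\phi_i g = \mu_i' \nu' g = \mu_i' \nu' = \phi_i$ for $i = 1,2$, so $g \in G_{\phi_1} \cap G_{\phi_2} = H$, whence $\nu(gx) = \nu(x)$. Regularity of $\nu'$ is the delicate issue (see below); assuming it, $\psi$ is well-defined, and it is a homomorphism and onto because $\nu = \psi \nu'$ is onto and $\nu'$ is onto. That $\psi$ is a covering again follows by the same diagram-chase argument used for $\mu_i$, using that $\nu'$ and $\nu$ are coverings. The relations $\psi\nu' = \nu$ (immediate from the definition) and $\mu_i\psi\nu' = \mu_i\nu = \phi_i = \mu_i'\nu'$ together with surjectivity of $\nu'$ give $\mu_i \psi = \mu_i'$, so the diagram of Figure~\ref{fig:upwards} commutes.

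\textbf{Cardinality and cover group.} For the bound $|\Delta_{\top}| \leq |\Delta_1|\,|\Delta_2|$, I would use the map $V_{\Delta_{\top}} \to V_{\Delta_1} \times V_{\Delta_2}$ given by $w \mapsto (\mu_1(w), \mu_2(w))$ and show it is injective: if $\mu_1(w) = \mu_1(w')$ and $\mu_2(w) = \mu_2(w')$, lift $w = \nu(v)$, $w' = \nu(v')$; since $\nu$ is regular and, as one checks, the common refinement forces $v, v'$ into a common $H$-orbit — indeed $\phi_i(v) = \mu_i\nu(v) = \mu_i(w) = \mu_i(w') = \phi_i(v')$ for $i=1,2$, and then using regularity of each $\phi_i$ one gets $g_i \in G_{\phi_i}$ with $g_i v = v'$; the subtlety is upgrading ``$v,v'$ in the same $\phi_1$- and $\phi_2$-fiber'' to ``same $H$-orbit,'' which I would handle via the partition/tiling results (Observation~\ref{partition}) applied to $\Gamma \to \Delta_i$ together with connectedness, or more cleanly by invoking Lemma~\ref{lem:lift-path} to propagate a single coincidence along walks. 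Finally, $G_\nu = \{g \in \Aut(\Gamma) : \nu g = \nu\}$ equals $H$ essentially by Proposition~\ref{FreeCover} / the definition of the quotient: $g \in G_\nu$ iff $g$ preserves every $H$-orbit, and since $H$ acts freely this forces $g \in H$; conversely $H \subseteq G_\nu$ is immediate.

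\textbf{Main obstacle.} The crux is regularity — both establishing that $\nu$ is regular (routine, from the $\Gamma/H$ description) and, more seriously, that the hypothesis forces $\nu'$ to be regular so that the universal map $\psi$ is well-defined. The cleanest route is probably to show that \emph{any} covering $\nu' : \Gamma \to \Delta$ sitting between $\Gamma$ and the finite $\Delta_i$ via regular coverings is itself regular: one argues $G_{\nu'}$ acts transitively on fibers of $\nu'$ by lifting the transitive actions of $G_{\phi_i}$ through the tower, using Lemma~\ref{lem:lift-path} to transport group elements. This is the step I expect to require the most care, and also the place where the ``each vertex of $\Gamma$ has finite in/out-degree'' and finiteness of the $\Delta_i$ hypotheses are actually used. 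Everything else is diagram chasing of the kind already rehearsed in the proofs of Lemma~\ref{lem:regular} and Observation~\ref{partition}.
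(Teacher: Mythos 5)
Your construction is genuinely different from the paper's, and the difference matters. You set $\Delta_{\top} = \Gamma/H$ with $H = G_{\phi_1} \cap G_{\phi_2}$, whereas the paper defines the vertices and edges of $\Delta_{\top}$ to be the \emph{intersections of orbits} $[x] = [x]_1 \cap [x]_2$; the two graphs coincide only when every such intersection is a single $H$-orbit. For the universal property, the step you flag as delicate (regularity of $\nu'$) is a real gap in your write-up but it is fillable with tools already in the paper: since $\phi_1 = \mu_1'\nu'$ is regular, $\nu'$ is regular --- this is exactly the first assertion of Theorem~\ref{thm:sunada}; alternatively, if $\nu'(x) = \nu'(y)$, regularity of $\phi_1$ gives $g \in G_{\phi_1}$ with $gx = y$, and then $\nu' g$ and $\nu'$ are two homomorphisms with $\mu_1'(\nu' g) = \phi_1 g = \phi_1 = \mu_1'\nu'$ agreeing at $x$, hence equal by uniqueness of lifted walks (Lemma~\ref{lem:lift-path}) and connectedness of $\Gamma$; so $g \in G_{\nu'} \leq H$ by your computation, and $\psi$ is well defined. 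So the existence statement and the universal property do go through along your route.

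The genuine gap is the quantitative clause. For $\Delta_{\top} = \Gamma/H$, injectivity of $H(v) \mapsto (\phi_1(v), \phi_2(v))$ is precisely the claim that the intersection of a $G_{\phi_1}$-orbit with a $G_{\phi_2}$-orbit is a single $H$-orbit, and this does \emph{not} follow from freeness/regularity of the two actions separately: if $g_1 v = g_2 v$ with $g_i \in G_{\phi_i}$, then $g_2^{-1} g_1$ fixes $v$ but need not lie in either group, so it need not be the identity; neither Observation~\ref{partition} nor Lemma~\ref{lem:lift-path} repairs this. Concretely, let $\Gamma$ be the $4$-cycle with both orientations of each edge (vertices $1,2,3,4$, edges $i \to i \pm 1$), let $G_{\phi_1} = \langle r \rangle \cong \bbbZ_4$ be the rotations and $G_{\phi_2} = \{1, r^2, s, r^2 s\}$ the Klein group generated by the two reflections through edge midpoints. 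Both act freely, both quotients $\Delta_1, \Delta_2$ have one vertex and two loops, and both canonical coverings are regular; yet $H = \{1, r^2\}$, so $\Gamma/H$ has two vertices, exceeding $|\Delta_1|\,|\Delta_2| = 1$. Thus the bound $|\Delta_{\top}| \leq |\Delta_1|\,|\Delta_2|$ cannot be proved for your $\Delta_{\top}$ --- in this example it fails for it --- and the later use of this proposition through Corollary~\ref{cor:cover-in-product} (embedding $\Delta_{\top}$ into $\Delta_1 \times \Delta_2$) also depends on the orbit-intersection model, not on $\Gamma/H$. The paper gets the bound for free by \emph{defining} $\Delta_{\top}$ via the intersections $[x]_1 \cap [x]_2$, at the price that the substantive verification becomes the local surjectivity of $\nu : x \mapsto [x]$ onto $\Out([v])$ and $\In([v])$ --- and the same example shows this is exactly where the two constructions part ways (there the orbit-intersection graph has one vertex and four loops, so it cannot be a degree-preserving image of $\Gamma$). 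So your proposal recovers the existence-plus-universal-property half by a different and largely sound route, but the cardinality half does not follow from your construction and would require either switching to the paper's definition of $\Delta_{\top}$ or an additional hypothesis guaranteeing that each intersection of fibers is a single $H$-orbit.
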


\begin{figure}[h!]
    \begin{tikzcd}
      &  \Gamma 
      \arrow[ldd, bend right, "{\phi_1}" ']
        \arrow[rdd, bend left, "{\phi_2}"] 
        \arrow[dd, bend left=50, "{\nu}" description, dashrightarrow]
        \arrow[d, red,  "{\nu '}" red] 
        & \\
        & \textcolor{red}{\Delta}  
        \arrow[d, red, "{\psi}" near start,dashrightarrow]
        \arrow[rd, red, "{\mu_2'}" red]
        \arrow[ld, red, "{\mu_1'}" ' red]
        &\\
       \Delta_1 &\Delta_{\top} 
       \arrow[r, "{\mu_2}" ',dashrightarrow ]
       \arrow[l, "{\mu_1}",dashrightarrow ]
       &\Delta_2\\
    \end{tikzcd}
   \caption{The diagram for Proposition~\ref{prop:upwards}. The dashed arrows indicate the maps that are asserted by the proposition's statement. Red maps are associated with the second half of the statement.} 
\label{fig:upwards}
\end{figure}
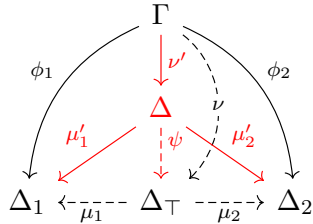

 \begin{proof}
 We construct $\phi : \Gamma \to \Delta_{\top}$ and then verify that 
 it has the desired properties.
For each $i=1,2$, let $G_i = \{ g \in \Aut(\Gamma) : \phi_i g = \phi_i \}$ be the 
 cover group of $\phi_i : \Gamma \to \Delta_i$. Because each $\phi_i$ is 
 regular, by Lemma~\ref{lem:regular}, $\Delta_i \cong \Gamma / G_i$. 
For each vertex or edge $x$ of $\Gamma$ and each $i =1,2$, we denote $[x]_i$ the orbit of $x$ with respect to $G_i$,
and let $[x] = [x]_1 \cap  [x]_2$.
We define $\Delta_{\top}$ with vertices 
$V_{\top} = \{ [v] : v \in V_{\Gamma} \}$ and edges $E_{\top} = \{ [e]
 : e \in E_{\Gamma} \}$, and define $\iota([e]) = [\iota(e)]$ and 
 $\tau([e]) = [\tau(e)]$.
 We observe that $\iota$ and $\tau$ are well-defined on $E_\top$ as follows.
If $[a] = [e]$, then $a, e \in [a]_1\cap [a]_2 = [e]_1\cap [e]_2$, so $[a]_i \cap [e]_i \neq 
 \emptyset$ for $i=1,2$, and therefore 
 $[a]_i = [e]_i$.
Thus, $\iota([a]_i) = \iota([e]_i)$ for $i = 1, 2$, and recalling 
 that $\iota([a]_i) = [\iota(a)]_i$ and $\iota([e]_i) = [\iota(e)]_i$, 
 $[\iota(a)]_i = [\iota(e)]_i$ for $i = 1, 2$, so $[\iota(a)] = [\iota(e)]$, i.e., $\iota([a]) = \iota[(e)]$, and similarly for $\tau$.
The maps $\mu_i:\Delta_\top\to\Delta_i$, where $[x]\mapsto [x]_i$ ($x$ being a vertex or an edge in $\Gamma$), are coverings.
Notice that $|\Delta_{\top}| \leq |\Delta_1| \, |\Delta_2|$ by construction.

The canonical map $\nu:\Gamma \rar \Delta_{\top}$ defined by 
 $\nu(x) = [x]$ for each vertex or edge $x$ of $\Gamma$, where initial and terminal vertices are inherited, is a covering of $\Delta_\top$ such that each $\phi_i$ factors to $\phi_i=\mu_i\nu$.
Indeed, $\nu$ is an onto homomorphism by construction, and for each 
 $v \in V_{\Gamma}$ with two distinct incident edges $a$ and $e$, 
 as $\phi_i$ for $i=1,2$ are coverings, $[a]_i \cap [e]_i = \varnothing$, 
therefore $[a] \cap [e] = \emptyset$, implying that $\nu$ is one-to-one on the incident edges of $v$.

Suppose  $\Delta$ is such that 
 $\nu' : \Gamma \to \Delta$, $\mu_1' : \Delta \to 
 \Delta_1$, and $\mu_2' : \Delta \to \Delta_2$ are coverings satisfying $\phi_i=\mu_i\nu'$.
Define $\psi : \Delta \to \Delta_{\top}$ such that for each edge or vertex $x$ 
 of $\Gamma$,$\psi(\nu'(x)) = [x] =\nu(x)$, and observe that $\psi$ preserves the initial and terminal vertices of each edge $e$: $\psi(\nu'(\iota(e))) = [\iota(e)] = \iota([e]) = \iota(\psi(\nu'(e)))$. Because $\nu'$ is onto and $\nu'(x) = \nu'(y)$ implies  $[x]_i = \mu_i'\nu'(x) = \mu_i'\nu'(y) = [y]_i$ 
 for $i =1,2$, and therefore $[x] = [y]$, $\psi$ is well defined. Because $\nu$ is onto, $\psi$ is an onto homomorphism. Observe that by construction, maps in Figure~\ref{fig:upwards} commute.  
We observe that $\psi$ is a cover:
for each $i=1,2$ and each vertex $v \in V_{\Gamma}$, 
 $\mu_i\phi$ maps $\In(v)$  bijectively onto $\In([v]_i)$, and meanwhile, 
  $\mu_i'\nu'$ maps $\In(v)$  bijectively onto $\In([v]_i)$; as functions in Figure~\ref{fig:upwards} 
  commute by definition, $\psi$ must map $\In(\nu'(v))$  onto $\In([v])$. Similarly for $\Out(v)$.

Finally, we observe that the cover group $G_{\psi}$ is $G_1 \cap G_2$.
On the one hand, if $g \in G_1 \cap G_2$, then for any vertex or edge $x$ of 
  $\Gamma$, $\nu(gx) = [gx] = [gx]_1 \cap [gx]_2 = [x]_1 \cap [x]_2 
  = [x] = \nu(x)$, so $g \in G_{\nu}$.
 On the other hand, if $g \in G_{\nu}$, then $\nu (gx) = 
  \nu(x)$, i.e., $[gx] = [x]$, i.e., $[gx]_i = [x]_i$ for $i = 1, 2$, 
  so for each vertex or edge $x$ of $\Gamma$, there exists $h_{x,i} \in G_i$ 
  for each $i = 1, 2$ such that $gx = h_{x,i} x$; but as $G_i$ acts freely, 
  $h_{x,i} = g$ for each $x$ and $i$, and $g \in G_1 \cap G_2$.
 \end{proof}

Given $\Delta_1$ and $\Delta_2$, we consider a cross product $\Theta =\Delta_1\times \Delta_2$ defined with $V=V_{\Theta}=V_1\times V_2$ and 
$e\in E_\Theta$ with start at $(v_1,v_2)$ terminating at $(v_1,v_2')$ if and only if there are $e_1\in E_1$ starting at  $v_1$ terminating at $v_1'$ and $e_2\in E_2$ starting at $v_2$ and terminating at $v_2'$. 
To ease notation we write $e=(e_1,e_2)$.

\begin{cor}\label{cor:cover-in-product}
    The common cover $\Delta_\top$ identified in Proposition~\ref{prop:upwards} is isomorphic to a subgraph of $\Theta=\Delta_1\times\Delta_2$
\end{cor}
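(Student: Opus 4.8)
The plan is to construct an explicit embedding of $\Delta_\top$ into $\Theta = \Delta_1 \times \Delta_2$ using the covering maps $\mu_1$ and $\mu_2$ that were produced in Proposition~\ref{prop:upwards}. Recall that $\mu_i : \Delta_\top \to \Delta_i$ sends $[x] \mapsto [x]_i$ for $x$ a vertex or edge of $\Gamma$. First I would define the candidate map $\chi : \Delta_\top \to \Theta$ on vertices by $\chi([v]) = (\mu_1([v]), \mu_2([v])) = ([v]_1, [v]_2)$ and on edges by $\chi([e]) = (\mu_1([e]), \mu_2([e])) = ([e]_1, [e]_2)$. I would check that $\chi$ lands in $\Theta$: by definition an edge of $\Theta$ from $(w_1,w_2)$ to $(w_1',w_2')$ is exactly a pair $(e_1,e_2)$ of edges with $\iota(e_i) = w_i$ and $\tau(e_i) = w_i'$; since $\mu_i$ is a graph homomorphism, $\iota([e]_i) = [\iota(e)]_i$ and $\tau([e]_i) = [\tau(e)]_i$, so $\chi([e])$ is the legitimate edge $([e]_1,[e]_2)$ from $\chi([\iota(e)])$ to $\chi([\tau(e)])$. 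Thus $\chi$ is a graph homomorphism.

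The main content is injectivity of $\chi$ on vertices and on edges, i.e. that $\chi$ is an embedding in the sense defined just before Definition~\ref{lifts}. For this I would use the key identity $[x] = [x]_1 \cap [x]_2$ that defines $\Delta_\top$'s vertices and edges. Suppose $\chi([x]) = \chi([y])$ for two vertices (or two edges) $x, y$ of $\Gamma$; then $[x]_1 = [y]_1$ and $[x]_2 = [y]_2$, hence $[x] = [x]_1 \cap [x]_2 = [y]_1 \cap [y]_2 = [y]$. So $\chi$ is one-to-one on vertices and on edges, hence an embedding, and $\Delta_\top$ is isomorphic to its image, which is a subgraph of $\Theta$. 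I would remark that the image is precisely the subgraph of $\Theta$ on those pairs $([v]_1,[v]_2)$ (resp. $([e]_1,[e]_2)$) arising from a common $v$ (resp. $e$) in $\Gamma$ — equivalently, on pairs whose fibers in $\Gamma$ under $\phi_1$ and $\phi_2$ intersect.

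I do not expect a serious obstacle here; this corollary is essentially a repackaging of the construction in the proof of Proposition~\ref{prop:upwards}. The only point that deserves care is making sure $\chi$ genuinely respects the edge/vertex incidence structure of the product graph $\Theta$ as it was defined (a single letter collision in the displayed definition of $\Theta$ notwithstanding), and that well-definedness of $\chi$ — i.e. that $[x] = [y]$ already forces $[x]_i = [y]_i$ — holds, which is immediate since $[x] = [y]$ gives $[x]_i \cap [y]_i \neq \varnothing$ and the orbits partition, exactly as argued for $\iota, \tau$ being well-defined in the proof of Proposition~\ref{prop:upwards}. Hence the whole argument is a couple of lines: define $\chi$ via $(\mu_1,\mu_2)$, observe it is a homomorphism into $\Theta$, and observe injectivity from $[x] = [x]_1 \cap [x]_2$.
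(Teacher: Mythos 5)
Your proof is correct and follows essentially the same route as the paper: the paper's map is $[x]\mapsto(\phi_1(x),\phi_2(x))$, which coincides with your $\chi=(\mu_1,\mu_2):[x]\mapsto([x]_1,[x]_2)$ once one identifies $\Delta_i\cong\Gamma/G_{\phi_i}$ via regularity (Lemma~\ref{lem:regular}). Your injectivity argument from $[x]=[x]_1\cap[x]_2$ and the well-definedness check are exactly the content the paper relies on, just spelled out in slightly more detail.
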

\begin{proof}
By the proof of Prop.~\ref{prop:upwards} the graph $\Delta_\top$ is defined with vertices $V_\top=\{ [v]=[v]_1\cap[v]_2 \,:\, v\in V_{\Gamma}\}$ and edges $E_\top=\{[e]=[e]_1\cap[e]_2\,:\, e\in E_{\Gamma}\}$. Since $\phi_i$ are regular covers, the orbit of a vertex or an  edge $x$ with respect to $G_{\phi_i}$ in $\Gamma$ is precisely the fiber of vertex or an edge $\phi_i(x)$ in $\Delta_i$. We construct a graph $\Delta'$ as a subgraph of $\Theta$ by taking $V_{\Delta'}=\{(\phi_1(v),\phi_2(v))\,:\, [v]_1\cap[v]_2\not = \emptyset, v\in V_\Gamma\} $ and similarly, $E_{\Delta'}=\{(\phi_1(e),\phi_2(e))\,|\, [e]_1\cap[e]_2\not= \emptyset, e\in E_\Gamma\}$. Then the map $[x]\mapsto(\phi_1(x),\phi_2(x))$ iff $[x]=[x]_1\cap[x]_2$ is an isomorphism from $\Delta_\top$ to $\Delta'$. Moreover, the projections $p_i: (\phi_1(x),\phi_2(x)) \mapsto \phi_i(x)$ for $i=1,2$ act precisely as maps $\mu_i$ in the proof of Prop.~\ref{prop:upwards} (Figure~\ref{fig:upwards}).
\end{proof}

We conclude this section with examples illustrating the 
 effect of maps in Figure~\ref{fig:upwards}.

\begin{exam}{\rm 
Two examples illustrating Proposition~\ref{prop:upwards} are shown 
in Figure~\ref{Example}.  In (a) The graph $\Gamma$ is the Cayley graph of 
$\langle      (1, 0), (0, 1) \rangle \cong \bbbZ_4 \times \bbbZ$, the groups associated with coverings $\phi_1$ and $\phi_2$ are $G_{\phi_1} =     \langle (1, 0), (0, 3) \rangle = \bbbZ_4 \times 3\bbbZ$ and $G_{\phi_2} =   \langle (2, 0), (0, 1) \rangle = 2\bbbZ_4 \times \bbbZ$, respectively.  We obtain $G_{\nu} = 2\bbbZ_4 \times 3\bbbZ$, and $G_{\mu_1} \cong     \bbbZ_2$ while $G_{\mu_2} \cong \bbbZ_3$.

\begin{figure}[ht]
\begin{center}
(a)
\includegraphics*[width = 2.7in]{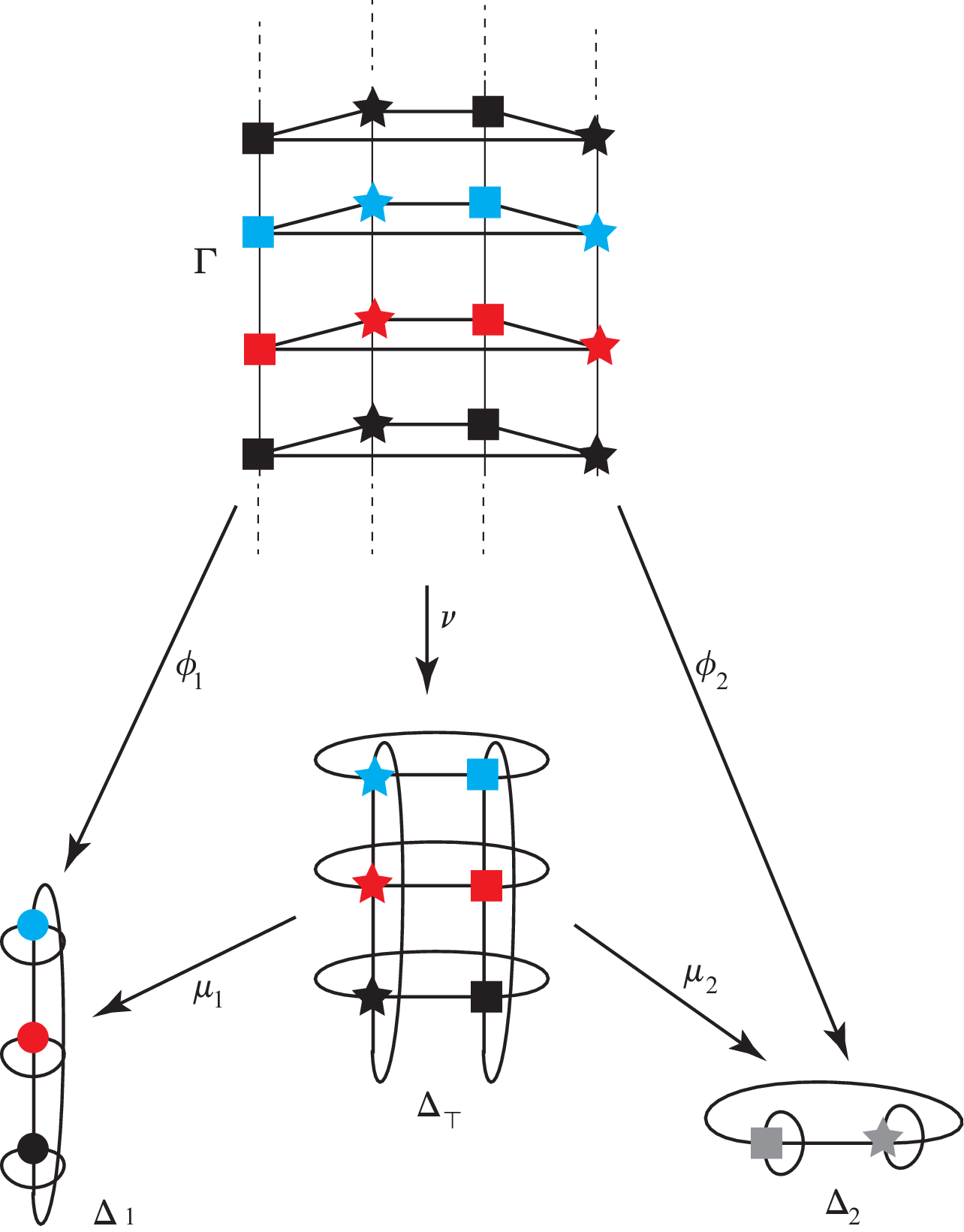} \qquad
(b) \includegraphics*[width = 1.7in]{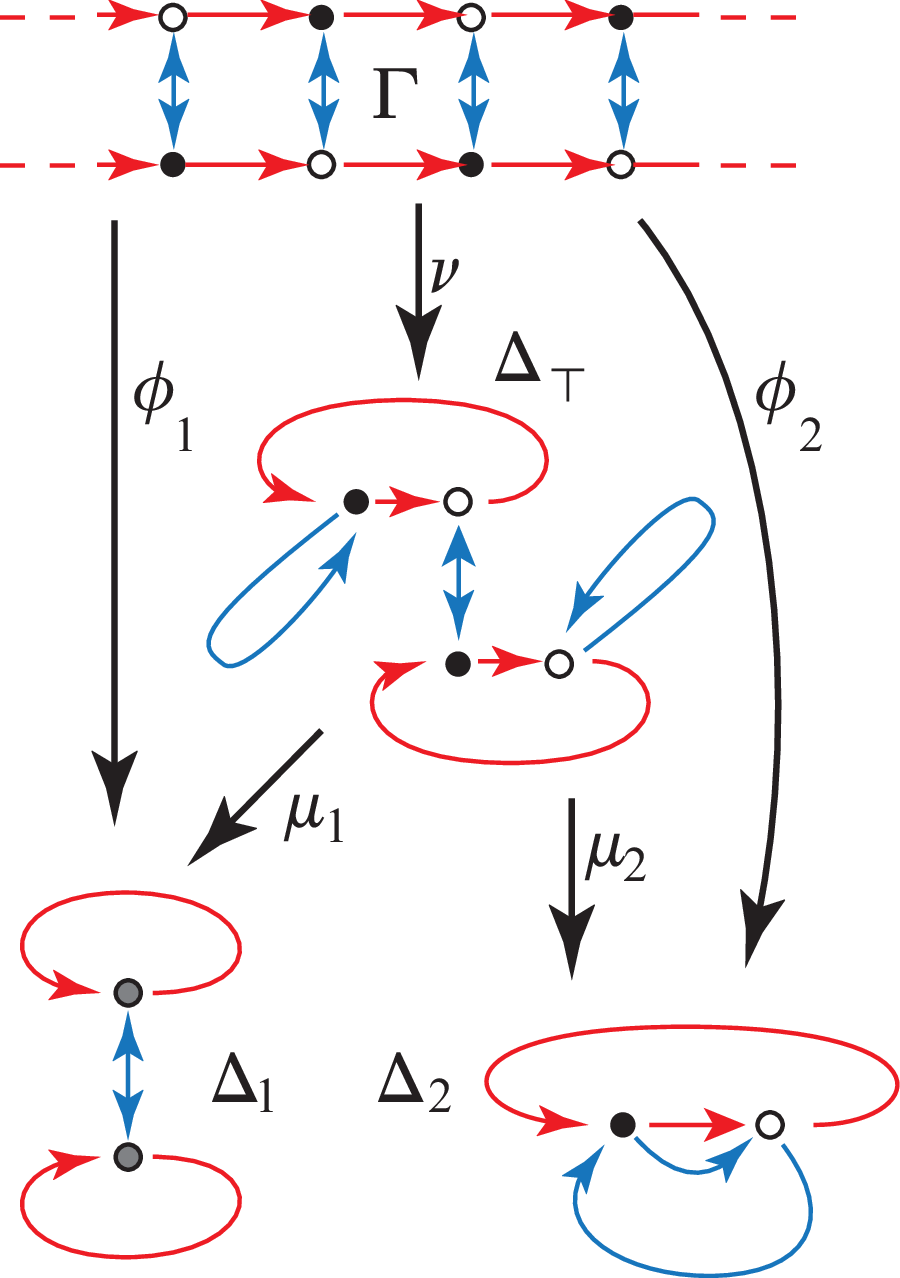} 
\caption{Two illustrations of the situation in 
Propositon~\ref{prop:upwards}. (a) The cover maps are depicted by setting vertex color to map to the same vertex color for covering $\Delta_1$,
and vertex shape maps to the same vertex shape
          for covering $\Delta_2$.
(b) The same color edges map to the respective color edges in $\Delta_1$ and $\Delta_2$. Vertical neighboring vertices map to vertices in $\Delta_1$ while horizontal neighboring vertices map to $\Delta_2$, white vertex to white and black vertex to black. 
In both examples 
         $\Delta_{\top}$ is the minimal common cover graph.
}\label{Example}
\end{center}
\end{figure}
In Figure~\ref{Example}(b) 
the vertices of $\Gamma$ are elements of $\mb Z \times \mb Z /2\mb Z$.
Let $f(i, j) = (i, j + 1 \mod 2)$ (so that $2 f$ is the identity, i.e, $f$ 
 flips the vertices up-down), and let  $g(i, j) = (i + 1, j)$ ($g$ shifts 
 to the right, so that $g^k(i, j) = (i + k, j)$ for any $k \in \bbbZ$) be
 two elements of $G=\Aut(\Gamma)$.
Consider the composition of a shift and a flip $h = f g\in G$ such that 
 $h^k(i, j) = (i + k, j + k \mod 2)$ for $k \in \mb Z$.
The two quotient graphs are $\Delta_1=\Gamma / \langle g \rangle$ and 
 $\Delta_2=\Gamma / \langle h \rangle$ shown.
Since $2 g = 2 h$, there is a common cover $\Delta_\top$ of $\Delta_1$ and 
 $\Delta_2$ as depicted. 
 }
\end{exam}

\section{Voltages}\label{Svoltages}

In this section we review some notions of voltage graphs from \cite[\S 2]{GT}, define lifting of voltages  and prove several properties of lifted voltages and the derived respective graphs.

\begin{defn}\label{derived-graph-a}
{\rm
Let $\Delta$ be a graph, and let $G$ be a group.
A {\em voltage assignment} on edges of
$\Delta$ is a function $\gamma : E_{\Delta} \to G$.
A {\em voltage graph} is an ordered pair $(\Delta, \gamma)$, where $\gamma$ is 
 a voltage assignment on $E_{\Delta}$.
The {\em voltage group} of $\gamma$ is the subgroup of $G$ generated with $\gamma(E_\Delta)$ and denoted $\langle \gamma(E_{\Delta})\rangle$. 

Given the voltage graph $(\Delta, \gamma)$, the {\em derived graph of $\Delta$ 
 by $\gamma$}, denoted  $\Delta \times_\gamma G$, is the graph $\Gamma = ( V, E)$ defined by:
  \[
 V = V_\Delta \times G,\quad E = E_\Delta \times G,
  \]
 and
  \[
 \iota(e, g) = (\iota(e), g), \quad
 \tau(e, g) = (\tau(e), g \gamma(e)).
  \]
  }
\end{defn}

From the definition, it follows that the derived graph $\Delta \times_\gamma G$ is a cover of $\Delta$ under the {\it canonical map} $(x,g)\mapsto x$ for a vertex or an edge $x$ in $\Delta$. Each $(v,g)$ (resp., $(e,g)$) is lift of vertex $v$ (resp., edge $e$). We observe that the derived graph depends on the voltage assignment.

\begin{exam}\label{DifferentVoltages}
{\rm The voltage assignments, although coming from the same group,  may produce distinct derived graphs.
Consider the graph $\Delta$ with four vertices and five edges depicted at the middle of 
 Figure~\ref{SameQuotient}, and let the voltage group be $\bbbZ$.
The three black edges form a spanning tree, so let $\gamma(\mbox 
 {\rm black}) = 0$. 
We consider two assignments for the red and blue edge. In the first case, 
let $\gamma(\mbox {\rm red}) = 1$ and $\gamma(\mbox 
 {\rm blue}) = -1$, while in the second case we set 
 $\gamma(\mbox {\rm red}) = 1$ 
 and $\gamma(\mbox {\rm blue}) = 2$. The first case has a derived graph depicted on the top while the second case has the derived graph at the bottom. The derived graphs are not isomorphic because the bottom graph has cycles of length 4 while the top has no cycles of length 4.}
\end{exam}

\begin{figure}[ht]
\centerline{
\includegraphics*[width = 3in]{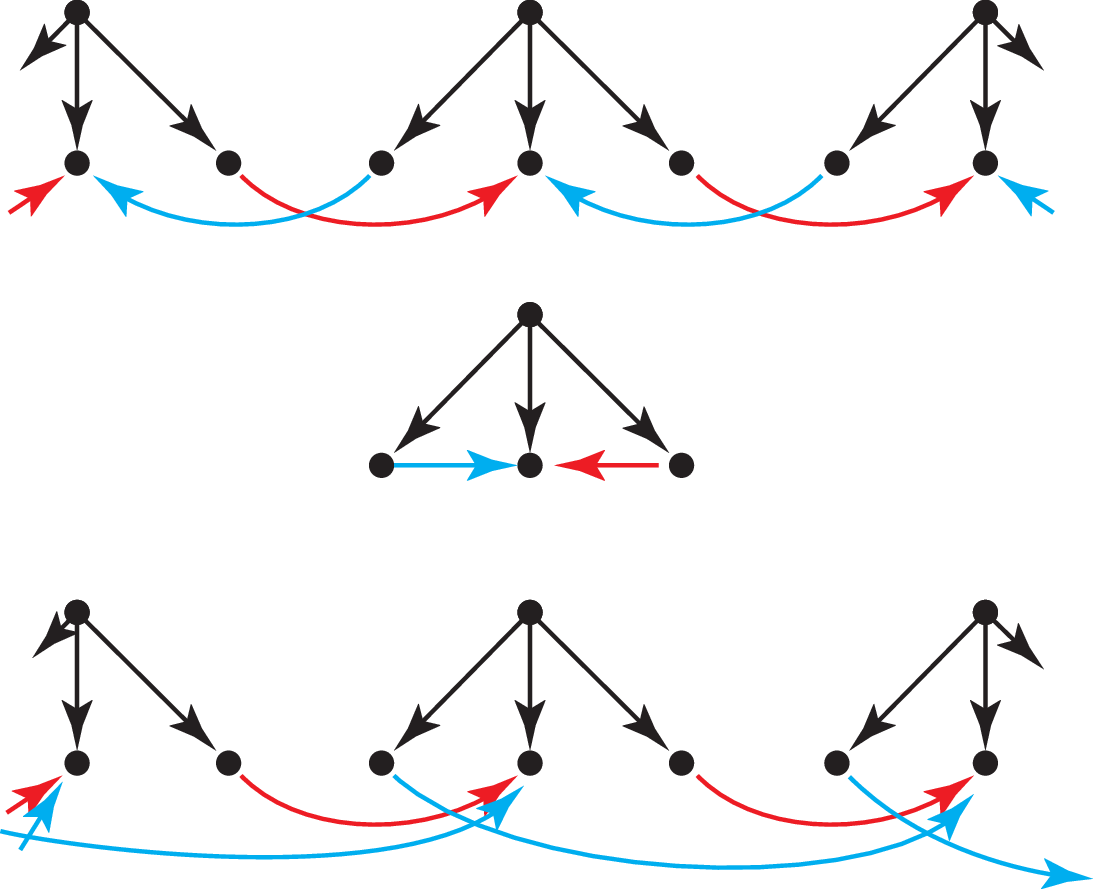} }
\caption{{\small Two non-isomorphic derived graphs (top and bottom) obtained from the same graph (middle) with two distinct voltage assignments from $\bbbZ$. In both cases black edges have assignments $0$, and the red edge has assignment 1. The blue edges has one assignment $-1$ (top derived graph) and $2$ (bottom derived graph).} }
\label{SameQuotient}
\end{figure}

Let $G$ act on $\Delta \ttimes {\gamma} G$ as follows.
For $h\in G$, let $h(v, g) = (v, hg)$ and $h(e, g) = (e, hg)$ so that 
 $h(\iota(e, g)) = h((\iota(e), g)) = (\iota(e), hg) = \iota(e, hg)$ and 
 $h(\tau(e, g)) = h((\tau(e), g\gamma(e)) = (\tau(e), hg\gamma(e)) =
 \tau(e, hg)$.

Consider a cyclic walk $p=v_0 e_1^{\epsilon_1} v_1 
\cdots v_{n-1} 
 e_n^{\epsilon_n} v_n$ in $\Delta$ where $v_n = v_0$ and $\epsilon_i = 1$ if $p$ traverses $e_i$ along its orientation (i.e., $v_{i-1} = \iota(e_i)$ and $\tau(e_i) = v_i$) or $\epsilon_i = -1$ is $p$ traverses $e_i$ against its orientation (i.e., $v_{i-1} = \tau(e_i)$ and $\iota(e_i) = v_i$) for each $i=1,\ldots,n$. 
 We set $\gamma(p) = \prod_{k=1}^n \gamma(e_k)^{\epsilon_k}$.
Then for every $g\in G$, as 
 ${}^g\gamma(p)(v_0, g) = (v_0, {}^g \gamma(p) g) = (v_0, g \gamma(p))$,  we can regard $\gamma(p)$ as moving   $(v_0, g)$  
 to $(v_0, g \gamma(p))$.\footnote{Here, the {\em left conjugate} of $\gamma(p)$ by $g$ is ${}^g\gamma(p) = g\gamma(p)g^{-1}$} This implies the following.

 \begin{rema}\label{rem:cyclic-walk}
    With the canonical covering,  a cyclic walk $p$  in $\Delta$ lifts to a cyclic walk $p'$ in $\Delta\times_\gamma G$ if and only if $\gamma(p)=1.$
 \end{rema}

The voltage assignments may have non-identity voltages scattered about 
 in the voltage graph. It is suitable to have 
 edges of trivial voltage correspond to the interior of a lift subgraph, i.e., the 
 edges of trivial voltage induce a connected subgraph of $\Delta$.
To do this, we ``condense'' the voltage assignment by combining \cite[Theorems 2.1.2 
 \& 2.5.4]{GT} as follows.

\begin{defn}\label{def:condensation}
{\rm
Let $(\Delta, \gamma)$ be a pointed voltage graph with $v_0$ as a base, and  let $S$ be a spanning tree of 
 $\Delta$.
Let $\gamma_S(e)=1_G$ for $e\in S$ and when 
$e \in E_{\Delta} \backslash 
 S$, let $\gamma_S(e)=\gamma(p_e)$ where $p_e$ 
 is a return walk of $e$ via $S$ from $v_0$.
 Then   
$\gamma_S$  is
the {\em condensation assignment} (or just {\em condensation}) of $\gamma$ with respect to $S$.
If a voltage assignment is such that  every edge of some spanning tree set is assigned the identity then we say it is {\em condensed}.
}
\end{defn}

\begin{defn}\label{def:voltage-assignment}
{\rm
Let $\Gamma$ be connected and  $\phi : \Gamma \to \Delta$ be a covering. 
Fix a proper lift $\Lambda$ of $\Delta$ and call it the {\em reference
 lift}.
A {\em voltage assignment with respect to $\Lambda$} 
on $\Delta$ is the following:
\begin{itemize}
 \item
  If $e \in E_{\Lambda}^0$, set $\gamma(e_\Delta) = 1_G$
 \item
     If $e \in \partial E_{\Lambda}$, with  one end point of $e$  in $\Lambda^0$ and the other endpoint in $ gV_{\Lambda}^0$ for some
     $g \in G_{\phi}$, set $\gamma(e_\Delta) =  g$.
\end{itemize}
}
\end{defn}

We will observe that when $\Delta$ has the voltage assignment condensed with 
 respect to some eligible lift $\Lambda$, the derived graph of $\Delta$ 
 by $\gamma$ is isomorphic to $\Gamma$.
Recall from Observation~\ref{ProperLift} that a proper lift $\Lambda$ of 
 $\Delta$ has at least one lift $S'$ of a spanning tree set $S$ of $\Delta$ 
 in its interior, i.e., the endpoints of each edge of $S'$ are interior 
 points of $\Lambda$.
Thus, a voltage assignment that is condensed with respect to $\Lambda$ 
 assigns the voltage $1$ to every edge of $S$.
However, the edges assigned voltage $1$ will be those covered by the 
 interior edges of $\Lambda$, which may include edges not in $S$.

\medskip
\noindent{\it Notation.} In the rest of the sections we consider a covering $\phi : \Gamma \to \Delta$ and  for a vertex or an edge $x$ of $\Gamma$ for $\phi(x)$, the image of $x$, we use notation $x_\Delta$.

\begin{lemm}\label{lem:voltage-assignment}
   Let  $\phi : \Gamma \to \Delta$ be a regular covering, and $\Gamma$ be connected. Let $\Lambda$ be a proper lift of $\Delta$ in $\Gamma$ and $\gamma$ voltage assignment on $\Delta$ with respect to $\Lambda$, i.e., 
   $\gamma(e) = 1$ if $e \in E_{\Lambda}^0$.
   Then  $\Gamma \cong \Delta \times_\gamma \langle \gamma(E_\Delta)\rangle$.
\end{lemm}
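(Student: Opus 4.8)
The goal is to produce an isomorphism between $\Gamma$ and the derived graph $\Delta \times_\gamma H$, where $H = \langle \gamma(E_\Delta)\rangle$. The natural candidate is built from the reference lift: since $\phi$ is regular, by Observation~\ref{partition} the translates $\{gV_\Lambda^0 : g \in G_\phi\}$ partition $V_\Gamma$, so every vertex $v$ of $\Gamma$ lies in a unique $gV_\Lambda^0$; write $v = g\,u$ with $u \in V_\Lambda^0$, and define $\Phi(v) = (\phi(v), g)$. For edges, every edge $e$ of $\Gamma$ has its initial vertex in some $gV_\Lambda^0$ (here I would use that the $G_\phi$-translates of $\Lambda$ are edge-disjoint with union $\Gamma$, as noted after Observation~\ref{partition}): write $e = g\,e'$ with $\iota(e') \in V_\Lambda^0$, and set $\Phi(e) = (\phi(e), g)$. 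First I would check this is well-defined, which is exactly the uniqueness in the partition statement, together with $G_\phi$ acting freely.

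The main work is verifying $\Phi$ is a graph homomorphism, i.e. that it respects $\iota$ and $\tau$. For the initial endpoint this is essentially immediate from the construction: if $e = g e'$ with $\iota(e') \in V_\Lambda^0$, then $\iota(e) = g\iota(e') \in gV_\Lambda^0$, so $\Phi(\iota(e)) = (\phi(\iota(e)), g) = (\iota(e)_\Delta, g) = \iota(\phi(e), g) = \iota(\Phi(e))$. The terminal endpoint is the crux. Here the two cases of Definition~\ref{def:voltage-assignment} matter. If $e' \in E_\Lambda^0$, then $\tau(e') \in V_\Lambda^0$ as well, $\gamma(e_\Delta) = 1$, and $\tau(e) = g\tau(e') \in gV_\Lambda^0$, so $\Phi(\tau(e)) = (\tau(e)_\Delta, g) = (\tau(e)_\Delta, g\gamma(e_\Delta)) = \tau(\Phi(e))$. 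If $e' \in \partial E_\Lambda$, then by definition $\tau(e')$ lies in $h V_\Lambda^0$ where $h = \gamma(e_\Delta)$, so $\tau(e) = g\tau(e') \in ghV_\Lambda^0$ and $\Phi(\tau(e)) = (\tau(e)_\Delta, gh) = (\tau(e)_\Delta, g\gamma(e_\Delta)) = \tau(\Phi(e))$. This is the step I expect to require the most care, because one must track the group element attached to $\tau(e')$ correctly and confirm it is exactly the voltage assigned to $e_\Delta$ — and one should double-check the boundary vertex of $\Lambda$ is assigned to a translate $hV_\Lambda^0$ with the \emph{same} $h$ that Definition~\ref{def:voltage-assignment} uses.

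It remains to check $\Phi$ is a bijection on vertices and on edges. Surjectivity on vertices: given $(w, g) \in V_\Delta \times H$, pick the unique $u \in V_\Lambda^0$ with $\phi(u) = w$ (uniqueness from $\phi$ injective on $V_\Lambda^0$), and note $gu$ is a vertex of $\Gamma$ with $\Phi(gu) = (w,g)$ — this also forces me to observe that $H = \langle\gamma(E_\Delta)\rangle$ coincides with the relevant subgroup of $G_\phi$, which follows because each $\gamma(e_\Delta)$ lies in $G_\phi$ by construction, and conversely any $g \in G_\phi$ can be realized by concatenating boundary edges along a walk (connectedness of $\Gamma$), giving $g$ as a product of voltages. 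Injectivity on vertices: if $\Phi(g_1 u_1) = \Phi(g_2 u_2)$ with $u_i \in V_\Lambda^0$, then $\phi(u_1) = \phi(u_2)$ forces $u_1 = u_2$, and $g_1 = g_2$ follows from the partition being disjoint (or freeness of $G_\phi$). The edge statements are entirely parallel. Finally, since $\Phi$ is a bijective homomorphism, its inverse is a homomorphism, so $\Phi$ is an isomorphism $\Gamma \cong \Delta \times_\gamma H$, as claimed. A subsidiary point worth stating explicitly along the way is that the canonical covering $\Delta \times_\gamma H \to \Delta$ composed with $\Phi$ recovers $\phi$, so $\Phi$ is compatible with the coverings — convenient for later use even if not strictly demanded by the statement.
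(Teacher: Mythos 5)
Your proposal is correct and follows essentially the same route as the paper's proof: establish $G_\phi=\langle\gamma(E_\Delta)\rangle$ by decomposing a path in the connected graph $\Gamma$ into segments crossing adjacent translates of the reference lift, then define the map $gv\mapsto(\phi(v),g)$, $ge'\mapsto(\phi(e'),g)$ on the partition $\{gV_\Lambda^0\}$ and check bijectivity and incidence. Your explicit two-case verification of the terminal endpoint (interior versus boundary edge of $\Lambda$) is exactly the intended argument, stated somewhat more carefully than in the paper's own write-up.
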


\begin{proof}
 First, we show that $G_\phi\cong \langle\gamma(E_\Delta)\rangle$. Since voltages in $\Delta$ are from $G_\phi$ we have $\langle\gamma(E_\Delta)\rangle\le G_\phi$.
 
On the other hand, suppose $g\in G_\phi$ and consider the lift $g\Lambda$ in $\Gamma$.
As $\Gamma$ is connected, and $\{ g\Lambda^0 : g \in G_{\phi} \}$ is a 
 partition of the vertices of $\Gamma$, for any interior vertex $v$ of $\Lambda$ and any interior vertex $gw$ of $g\Lambda$, there is a path in $\Gamma$ from $v$ to $gw\in g\Lambda_0$. 
Then $g\in \langle\gamma(E_\Delta)\rangle$ follows from two observations: 
 first, the path $p$ is a composition of paths that visit adjacent proper lifts of $\Delta$: $\Lambda = \Lambda_0,\Lambda_1, \ldots, \Lambda_k=g\Lambda$ and second, every path that starts in  a lift $\Lambda_i$ and terminates in a lift $\Lambda_{i+1}$ maps to a path in $\Delta$ whose voltage assignment is in $\langle\gamma(E_\Delta)\rangle$. 
 That is, if $p$ is a path from $\Lambda_i=h_i\Lambda$ to $\Lambda_{i+1}=h_{i+1}\Lambda$ that traverses an edge $e_i$ whose one endpoint is in $h_i\Lambda^0$ and the other in $h_{i+1}\Lambda^0$, then there is an edge $e$ in 
 $\partial \Lambda$ with one endpoint in $V_\Lambda ^0$ and another in $h_i^{-1}h_{i+1}V_\Lambda^0$ whose image $e_\Delta$ in $\Delta$ has voltage $\gamma(e_\Delta)=h_i^{-1}h_{i+1}$ for $1=0,\ldots,k-1$ with $h_0=1_G$. 
 Since, by the definition of $\gamma$, every path that visits only vertices in $V_\Lambda^0$ has edges mapped in $\Delta$ with voltages equal to $1_G$, the path $p$
 in $\Gamma$ maps to $\phi(p)=p_\Delta$ with $\gamma(p_\Delta)=h_1h_1^{-1}h_2\cdots h_{k-1}h_{k-1}^{-1}h_k=g$, and 
 hence $g\in \langle\gamma(E_\Delta)\rangle$.

We now verify that $\Gamma \cong \Delta \ttimes {\gamma} G_{\phi}$.
For each $v\in V_\Lambda^0$ and $g \in G_{\phi}$, let $\varphi(gv) = 
 (\phi(v), g)$; as $G_{\phi}$ acts freely on $\Gamma$, $\varphi$ is well-defined.
Similarly, for each $e\in E_\Gamma$ with $\iota(e) \in V_{\Lambda}^0$, and 
 each $g\in G_\phi$, let $\varphi(ge) = (\iota(e)_\Delta, g)$; again, as 
 $G_{\phi}$ acts freely on $E_{\Gamma}$, $\varphi$ is well-defined. Since $\phi$ is regular, the sets $gV_\Lambda^0$ partition $V_\Gamma$, so this map is bijective. 
We claim that the edge-vertex incidence relation is preserved. 
Then $\gamma(\phi(e)) = g^{-1}h$.
Since $\varphi(e) = (\phi(e), g)$, $\varphi(\iota(e)) = (\phi(\iota(e)), g) 
 = \iota(\phi(e), g)$ and similarly, $\varphi(\tau(e)) = (\phi(\tau(e)), h) 
 = (\phi(\tau(e)), g\gamma(\phi(e))) = \tau(e, g)$.
%
 %
 Thus, the edges are preserved and the map is a graph isomorphism.
 \end{proof}

From Def.~\ref{def:voltage-assignment} and Lemma~\ref{lem:voltage-assignment} we have the following observations.

\begin{obse}\label{obse:condensed}
Let $(\Delta, \gamma)$ be a connected voltage graph with a  
 voltage assignment such that for a spanning tree set $S \subseteq E_{\Delta}$, 
 $e \in S \implies \gamma(e) = 1_G$, where $G = \langle \gamma( E_{\Delta} )\rangle$. By the covering  $\Delta \times_\gamma G\to \Delta$, there is a proper lift of $\Delta$, the reference lift $\Lambda$,  such that $\gamma$ is a voltage assignment with respect to $\Lambda$. Then the interior vertices of $\Lambda$ can be taken to be $(v, 1_G)$, and all incident edges with two  endpoints in $\Lambda^0$  map to edges with voltage assignment $1_G$. Thus $\Lambda$ contains a lift $S_0$ of $S$, {\it the reference lift of $S$}, as a subgraph. 
All  vertices of every lift $gS_0$ of $S$ 
in $\Delta \ttimes {\gamma} G$ belong to the interior of the same  lift $g\Lambda$ of $\Delta$, and hence
for any two vertices $u,v\in V_\Delta$ we can consider $(u,g),(v,g)\in V_{g\Lambda}=gV_\Lambda$.
Moreover,  if $(e,g)$ and $(e',g)$ are edges in $\partial g\Lambda$ whose one endpoint is in $gS$ and another in $g'S$, for $g\not =g'$, then $\gamma(e_\Delta)=\gamma(e'_\Delta)=g^{-1}g'$. In other words, all edges connecting two adjacent lifts, are lifts of edges in $\Delta$ with the same voltage assignment.
\end{obse}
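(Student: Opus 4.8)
The plan is to prove Observation~\ref{obse:condensed} directly from the construction in Lemma~\ref{lem:voltage-assignment}, since most of the statement is really a repackaging of facts already established there. First I would set $G = \langle \gamma(E_\Delta)\rangle$ and recall from the proof of Lemma~\ref{lem:voltage-assignment} that the derived graph $\Delta \times_\gamma G$ is a cover of $\Delta$ via the canonical projection $(x,g)\mapsto x$, and that $G$ acts freely on $\Delta \times_\gamma G$ by $h\cdot(x,g) = (x,hg)$. So the fibers of the canonical covering are exactly the $G$-orbits, and the covering is regular. The reference lift $\Lambda$ should be built by Observation~\ref{ProperLift} starting from a lift of the spanning tree set $S$: since $\gamma(e) = 1_G$ for $e\in S$, the return-walk/path-lifting mechanics (Lemma~\ref{lem:lift-path}, together with the incidence formula $\tau(e,g) = (\tau(e), g\gamma(e))$) show that the vertices $\{(v, 1_G) : v\in V_\Delta\}$ together with the edges $\{(e, 1_G) : e\in S\}$ form a connected subgraph $S_0$ that is a lift of $S$; this forces $V_\Lambda^0 = \{(v,1_G): v\in V_\Delta\}$ to be a valid choice of interior vertex set. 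One then checks that $\gamma$, as originally given, coincides with the voltage assignment with respect to $\Lambda$ in the sense of Def.~\ref{def:voltage-assignment}: an edge $e$ with both endpoints in $\Lambda^0$ is a lift $(e,1_G)$ of an edge $e_\Delta$ whose terminal endpoint $(\tau(e_\Delta), \gamma(e_\Delta))$ lies in $\Lambda^0$ only if $\gamma(e_\Delta) = 1_G$, so the ``interior'' edges of $\Lambda$ are precisely the lifts of $\gamma$-trivial edges (which includes all of $S$ but possibly more), matching the first bullet of the definition; and a boundary edge $(e,1_G)$ has terminal endpoint $(\tau(e_\Delta), \gamma(e_\Delta)) \in \gamma(e_\Delta)V_\Lambda^0$, matching the second bullet with $g = \gamma(e_\Delta)$.

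Next I would establish the ``translation'' claims. For each $g\in G$, the set $gS_0$ (with vertices $\{(v,g): v\in V_\Delta\}$ and edges $\{(e,g): e\in S\}$) is again a lift of $S$, and since $\iota(e,g) = (\iota(e),g)$ and $\tau(e,g) = (\tau(e), g\gamma(e)) = (\tau(e),g)$ for $e\in S$, all its vertices lie in the interior of $g\Lambda$. This is exactly the assertion that all vertices of $gS_0$ belong to $\Lambda^0$'s translate $g\Lambda^0$, and it gives the statement that for any $u,v\in V_\Delta$ we may write $(u,g),(v,g)\in V_{g\Lambda}$. Then, for the final claim about boundary edges between adjacent lifts: if $(e,g)$ is an edge of $\partial(g\Lambda)$ with $\iota(e,g) = (\iota(e),g)\in gV_\Lambda^0$ and $\tau(e,g) = (\tau(e), g\gamma(e_\Delta)) \in g'V_\Lambda^0$ for some $g'\ne g$, then since $g'V_\Lambda^0 = \{(v, g') : v\in V_\Delta\}$ we must have $g\gamma(e_\Delta) = g'$, i.e.\ $\gamma(e_\Delta) = g^{-1}g'$; and the same computation applied to any other boundary edge $(e',g)$ of $g\Lambda$ running to the same adjacent lift $g'\Lambda$ gives $\gamma(e'_\Delta) = g^{-1}g'$ as well. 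Hence all edges joining the two adjacent lifts $g\Lambda$ and $g'\Lambda$ are lifts of $\Delta$-edges carrying the identical voltage $g^{-1}g'$.

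I expect the only genuinely delicate point to be the verification that $\Lambda$ built from $\{(v,1_G)\}$ and the lift of $S$ is in fact a \emph{proper} lift in the precise sense of the paper — that is, that every edge of $\Lambda$ has its initial vertex among the interior vertices and that adding all such edges (and their terminal endpoints) closes up into a connected subgraph on which $\phi$ is a bijection onto $E_\Delta$. This is handled by Observation~\ref{ProperLift}: the construction there produces exactly a proper lift containing a tree lift of $S$, and uniqueness of the interior vertex set in each orbit (combined with freeness of the $G$-action, from Proposition~\ref{FreeCover} applied to the regular covering $\Delta\times_\gamma G\to\Delta$) pins down $V_\Lambda^0 = \{(v,1_G)\}$. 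Everything else is a bookkeeping exercise using the two incidence identities $\iota(e,g)=(\iota(e),g)$ and $\tau(e,g)=(\tau(e),g\gamma(e))$, so I would keep those calculations terse. The remark that the interior edges ``may include edges not in $S$'' is worth stating explicitly, as it was already flagged in the discussion preceding the Notation paragraph, but it requires no separate argument beyond observing that $\gamma^{-1}(1_G)$ can be strictly larger than $S$.
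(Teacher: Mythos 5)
Your proposal is correct and follows exactly the route the paper intends: the paper states Observation~\ref{obse:condensed} as a direct consequence of Definition~\ref{def:voltage-assignment} and Lemma~\ref{lem:voltage-assignment} without spelling out the verification, and your argument simply fills in that bookkeeping, taking $V_\Lambda^0 = \{(v,1_G)\}$ with the lift $S_0$ of $S$ (using $\tau(e,1_G)=(\tau(e),1_G)$ for $e\in S$) and reading off the translate and boundary-edge claims from the incidence formulas $\iota(e,g)=(\iota(e),g)$ and $\tau(e,g)=(\tau(e),g\gamma(e))$. The only cosmetic remark is that freeness of the $G$-action on $\Delta\times_\gamma G$ is immediate from $h(v,g)=(v,hg)$ and does not really need Proposition~\ref{FreeCover}; this does not affect correctness.
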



%
%
%
 

\begin{exam}
   {\rm
Consider the voltage graph 
$(\Delta,\gamma)$ at upper left in Figure~\ref{condense} with voltage assignment in $\mb Z$: red edges have voltage $+1$,  blue edges have 
voltage $-1$, and black edges have voltage $0$.
 At upper right is the resulting derived graph
 $\Delta\times_\gamma \mb Z$.
 Although the upper left voltage graph is connected, the derived graph has two components. At lower left  is a condensation of the voltage graph 
 $(\Delta,\gamma_S)$,  in which the orange edge gets voltage $2$ and the green edge gets voltage $-2$.  The edges of the spanning tree set $S$ that produces the condensation
 are labeled with $0$. 
In this case the group generated by the voltages is $2\bbbZ$, and using that group we obtain the (connected!) derived graph $\Delta \times_{\gamma_S} 2\mb Z$ at lower 
 right.

\begin{figure}[ht]
\centerline{
\begin{overpic}[width = 4in]{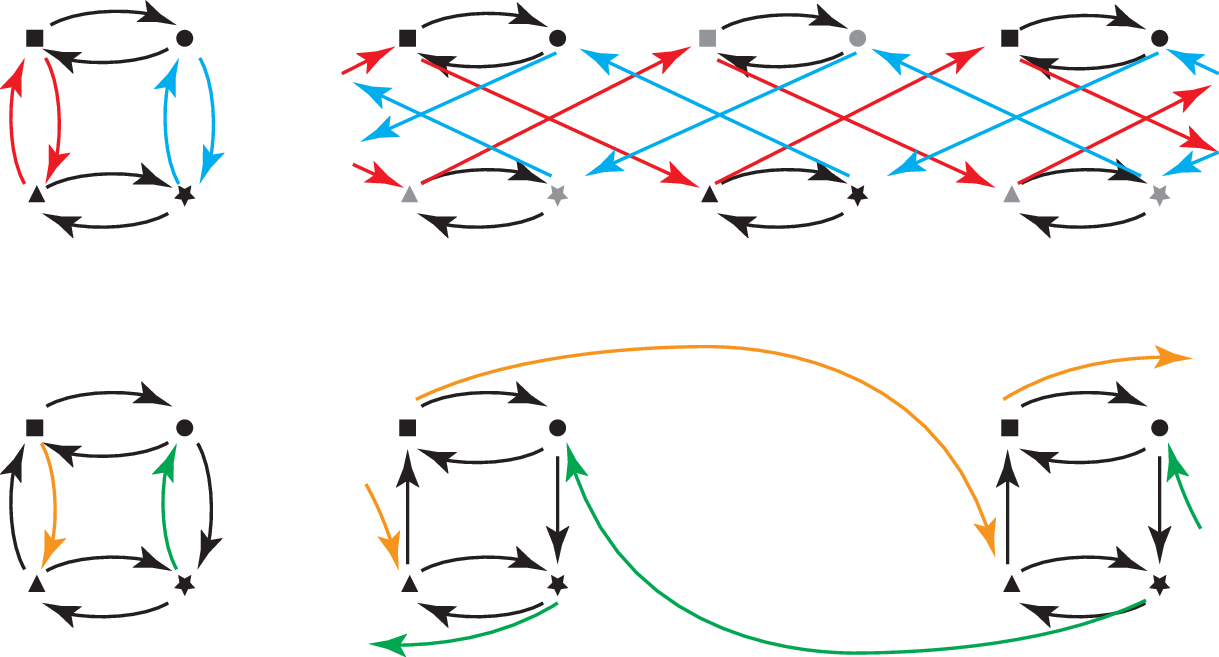} 
\put(-13,35){$(\Delta,\gamma)$}
\put(-2,43){{\small {$1$}}}
\put(3,43){{\small{$1$}}}
\put(9,43){{\small{$-1$}}}
\put(18,43){{\small{$-1$}}}
\put(105,35){$\Delta \times_\gamma \mb Z$}
\put(-13,3){$(\Delta,\gamma_S)$}
\put(-2,11){{\small {$0$}}}
\put(2.5,11){{\small{$2$}}}
\put(8.5,11){{\small{$-2$}}}
\put(8.5,22.5){{\small{$0$}}}
\put(18,11){{\small{$0$}}}
\put(105,3){$\Delta \times_{\gamma_S} 2\mb Z$}
\end{overpic}}
\caption{{\small A voltage graph $(\Delta, \gamma)$ with voltages from $\mb Z$ at top left and a condensation $(\Delta, \gamma_S)$ with respect of a spanning tree set with edges labeled $0$ generating $2\mb Z$ at left bottom. The corresponding derived graphs are to the right. The upper graph $\Delta \times _\gamma \mb Z$ has two components one of which is isomorphic to $\Delta\times_{\gamma_S} 2\mb Z$, the graph at the bottom. 
}}
\label{condense}
\end{figure}
}
\end{exam}
We combine \cite[Theorem 2.5.4]{GT} and \cite{AG};

\begin{theo}\label{2.5.4}
Let $\Delta$ be connected, let $\gamma : E_{\Delta} \to G$ be a voltage 
 assignment, and let $\gamma'$ be a condensation of $\gamma$.
Then $\Delta \ttimes {\gamma'} G$ consists of $[G : \langle 
 \gamma'(E_{\Delta}) \rangle]$ copies of the sole component of $\Delta 
 \ttimes {\gamma} \langle \gamma'(E_{\Delta}) \rangle$.
\end{theo}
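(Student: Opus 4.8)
The plan is to reduce the statement to the two cited facts by carefully tracking what the condensation does to both the voltage group and the derived graph. First I would record the key relationship: if $\gamma'$ is a condensation of $\gamma$ with respect to a spanning tree $S$ and base vertex $v_0$, then for every cyclic walk $p$ based at $v_0$ one has $\gamma'(p)$ lying in $\langle \gamma'(E_\Delta)\rangle$, and moreover the two assignments $\gamma$ and $\gamma'$ differ only by the ``switching'' operation of \cite[Theorem 2.5.4]{GT} — one changes $\gamma$ to $\gamma'$ by multiplying each edge voltage by the appropriate tree-path voltages at its endpoints. The standard switching-equivalence result (this is exactly the content of \cite[Theorem 2.5.4]{GT}) gives $\Delta\times_\gamma G \cong \Delta\times_{\gamma'} G$, so it suffices to analyze $\Delta\times_{\gamma'} G$.

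Next I would apply the structural observation that since $\gamma'$ kills every edge of the spanning tree $S$, the derived graph $\Delta\times_{\gamma'} G$ decomposes naturally: for each coset $g\langle\gamma'(E_\Delta)\rangle$ the set of vertices $\{(v,g h): v\in V_\Delta,\ h\in\langle\gamma'(E_\Delta)\rangle\}$ is closed under adjacency, because traversing any edge (tree or non-tree) multiplies the $G$-coordinate by an element of $\langle\gamma'(E_\Delta)\rangle$. This is the point where Remark~\ref{rem:cyclic-walk} and the reasoning in Observation~\ref{obse:condensed} enter: the lift of the reference spanning tree $S_0$ through the base vertex $(v_0,1_G)$ spans a copy of the interior lift, and every vertex reachable from $(v_0,1_G)$ has $G$-coordinate in $\langle\gamma'(E_\Delta)\rangle$. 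Thus $\Delta\times_{\gamma'} G$ is a disjoint union of isomorphic pieces indexed by the left cosets $G/\langle\gamma'(E_\Delta)\rangle$, each piece obtained by translating the component of $(v_0,1_G)$ by a coset representative via the free $G$-action described in Section~\ref{Svoltages}. There are exactly $[G:\langle\gamma'(E_\Delta)\rangle]$ such cosets.

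Then I would identify the distinguished component — the one containing $(v_0,1_G)$ — with the derived graph $\Delta\times_{\gamma'}\langle\gamma'(E_\Delta)\rangle$. This is essentially by definition: restricting the $G$-coordinate to the subgroup $H=\langle\gamma'(E_\Delta)\rangle$ gives a well-defined voltage graph whose derived graph is connected precisely when the voltages generate $H$ (which they do, by construction), and its vertex and edge sets $V_\Delta\times H$, $E_\Delta\times H$ together with the incidence rules match exactly the component of $(v_0,1_G)$ inside $\Delta\times_{\gamma'}G$. Connectedness of this component follows because $\Delta$ is connected and $H$ is generated by the non-tree edge voltages, so every $(v,h)$ with $h\in H$ is reachable from $(v_0,1_G)$. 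Finally, since $\Delta\times_\gamma G\cong\Delta\times_{\gamma'}G$ and the latter is $[G:H]$ disjoint copies of $\Delta\times_{\gamma'}H$, and since $\Delta\times_{\gamma'}H$ has the sole component just identified, the claim follows; note that ``the sole component of $\Delta\times_\gamma H$'' in the statement should be read via the same switching isomorphism restricted to $H$, which is legitimate because switching only rewrites edge voltages and does not leave $H$.

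The main obstacle will be the bookkeeping in the switching-equivalence step: one must check that the isomorphism $\Delta\times_\gamma G\to\Delta\times_{\gamma'}G$ supplied by \cite[Theorem 2.5.4]{GT} is compatible with the subgroup $H=\langle\gamma'(E_\Delta)\rangle$ in the sense that it carries $\Delta\times_\gamma H$ (which a priori need not be connected, nor need $H$ contain $\langle\gamma(E_\Delta)\rangle$ or vice versa) onto the union of the $H$-cosets' pieces inside $\Delta\times_{\gamma'}G$, so that ``the sole component of $\Delta\times_\gamma H$'' is unambiguous. Handling this cleanly — and making sure the base vertex and spanning tree are fixed consistently throughout so the switching map and the coset decomposition refer to the same data — is the delicate part; everything else is a direct application of the coset/orbit structure already developed in Sections~\ref{NoLabels} and \ref{Svoltages}.
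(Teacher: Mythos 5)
You should first note that the paper does not actually prove this statement: it is quoted as a combination of \cite[Theorem 2.5.4]{GT} and \cite{AG}, so your argument can only be compared with the standard one, and in substance it matches it. The load-bearing part of your proposal is correct: for $H=\langle\gamma'(E_\Delta)\rangle$, every $\gamma'$-voltage lies in $H$, so each vertex set $V_\Delta\times gH$ is closed under adjacency in $\Delta\ttimes{\gamma'}G$; each such piece is connected because tree edges carry $1_G$, $\Delta$ is connected, and the non-tree voltages generate $H$ (realized by return walks); the piece of the identity coset is literally $\Delta\ttimes{\gamma'}H$; and the free left $G$-action translates it onto the other pieces, of which there are $[G:H]$. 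Note that for this decomposition the switching step is not even needed, since the left-hand side of the theorem already involves the condensed assignment $\gamma'$.

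One claim in your third step is wrong as stated, though it only affects how one reads the right-hand side of the theorem. The switching function is $f(v)=\gamma(q_v)$, the $\gamma$-voltage of the tree path from $v_0$ to $v$; its values lie in $\langle\gamma(E_\Delta)\rangle$ and in general not in $H$, so the isomorphism $(v,g)\mapsto(v,g\,f(v)^{-1})$ from $\Delta\ttimes{\gamma}G$ to $\Delta\ttimes{\gamma'}G$ does not ``stay inside $H$'' and does not restrict to the vertex set $V_\Delta\times H$. What saves the interpretation is weaker and suffices: this isomorphism fixes $(v_0,1_G)$ (since $f(v_0)=1_G$) and carries components to components, so the component of the base vertex in $\Delta\ttimes{\gamma}G$ is carried onto the identity-coset piece of $\Delta\ttimes{\gamma'}G$, i.e.\ onto $\Delta\ttimes{\gamma'}H$. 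Indeed the expression $\Delta\ttimes{\gamma}\langle\gamma'(E_\Delta)\rangle$ in the statement is not literally well defined unless $\gamma(E_\Delta)\subseteq H$; it should be read as $\Delta\ttimes{\gamma'}\langle\gamma'(E_\Delta)\rangle$ (equivalently, the base component of $\Delta\ttimes{\gamma}G$), which is also how the paper later uses the result, in Proposition~\ref{prop:condensed_voltages} and in the proof of Proposition~\ref{Lift_voltages-new}. With that correction, your outline is a complete and correct proof of the intended statement.
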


We note that if a voltage graph $(\Delta, \gamma)$ is connected and 
 if its voltage assignment is condensed, then  $\Delta \ttimes {\gamma} G$ is connected, where  $G$ is the group generated 
 by its voltages.
From \cite{AG} (see \cite[Theorem 2.5.1]{GT}), it follows that unless $G = G'$, 
 $\Delta_{\top} \ttimes {\gamma'} G$ is not connected.
 We have the following proposition.

\begin{prop}\label{prop:condensed_voltages}
Let $(\Delta, \gamma)$ be a connected voltage graph and let $G$ be the group 
 generated from the voltages assigned to $\Delta$.
For any connected component $\Gamma_0$ of $\Delta \ttimes {\gamma} G$ and any 
 spanning tree set $S$ of $\Delta$, the condensation $\gamma_S$ satisfies 
 $\Gamma_0 \cong \Delta \ttimes {\gamma_S} \langle \gamma_S(E_\Delta) 
 \rangle$.
\end{prop}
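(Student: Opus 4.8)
The plan is to reduce this statement to the previously established tools: Theorem~\ref{2.5.4}, Lemma~\ref{lem:voltage-assignment}, and Observation~\ref{obse:condensed}. The key point is that once we fix a connected component $\Gamma_0$ of $\Delta\ttimes{\gamma}G$, we can view $\Delta$ as a quotient of $\Gamma_0$ by a regular covering, and then the chosen spanning tree set $S$ determines a voltage assignment (with respect to an appropriate reference lift) whose derived graph recovers $\Gamma_0$.

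First I would observe that the canonical map $\pi:\Delta\ttimes{\gamma}G\to\Delta$ is a regular covering with cover group $G$ (acting by $h(x,g)=(x,hg)$, as recorded just after Definition~\ref{derived-graph-a}), so its restriction to the component $\Gamma_0$ is still a covering onto $\Delta$ since $\Delta$ is connected; call it $\phi_0:\Gamma_0\to\Delta$. Its cover group $G_{\phi_0}$ is a subgroup of $G$, and by Lemma~\ref{lem:regular} together with regularity we get $\Delta\cong\Gamma_0/G_{\phi_0}$. Next, given the spanning tree set $S$ of $\Delta$, apply Observation~\ref{ProperLift} to obtain a proper lift $\Lambda\le\Gamma_0$ of $\Delta$ with respect to $S$, whose interior contains a lift $S_0$ of $S$. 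Let $\delta$ be the voltage assignment on $\Delta$ with respect to $\Lambda$ in the sense of Definition~\ref{def:voltage-assignment}, so $\delta(e)=1_G$ for $e\in E_\Lambda^0$ (in particular on $S$) and $\delta(e)=h$ for a boundary edge crossing into $hV_\Lambda^0$. By Lemma~\ref{lem:voltage-assignment}, $\Gamma_0\cong\Delta\ttimes{\delta}\langle\delta(E_\Delta)\rangle$, and $\langle\delta(E_\Delta)\rangle=G_{\phi_0}$ by the first part of that lemma's proof.

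The remaining step is to identify $\delta$ with the condensation $\gamma_S$ and $\langle\delta(E_\Delta)\rangle$ with $\langle\gamma_S(E_\Delta)\rangle$. Both $\delta$ and $\gamma_S$ assign $1_G$ to every edge of $S$; for an edge $e\notin S$, the value $\gamma_S(e)=\gamma(p_e)$ is the voltage of the return walk of $e$ via $S$, while $\delta(e)$ is the unique cover group element carrying the interior-lift endpoint of $e^*$ (a lift of $e$) to the neighboring interior lift. Since the lift $S_0\subseteq\Lambda$ has trivial voltage along $S$, following the return walk $p_e$ in $\Gamma_0$ from the base interior vertex lifts exactly to the path realizing this crossing, so $\delta(e)$ and $\gamma_S(e)$ coincide as elements of $G$; this is the content of Observation~\ref{obse:condensed} applied to $\Gamma_0$. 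Hence $\delta=\gamma_S$, so $\langle\gamma_S(E_\Delta)\rangle=G_{\phi_0}$ and $\Gamma_0\cong\Delta\ttimes{\gamma_S}\langle\gamma_S(E_\Delta)\rangle$.

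The main obstacle I expect is the bookkeeping in the final identification step: one must be careful that the reference lift $\Lambda$ used to define $\delta$ is compatible with the base vertex $v_0$ used to form the return walks $p_e$ in Definition~\ref{def:condensation}, and that conjugation ambiguities (the distinction between $\gamma(p)$ and its conjugates noted in the footnote before Remark~\ref{rem:cyclic-walk}) do not interfere — choosing the interior vertex over $v_0$ as the basepoint of the lifted return walks makes the correspondence literal rather than only up to conjugacy. One should also note explicitly that different choices of spanning tree set or reference lift change $\gamma_S$ only by an inner-automorphism-type adjustment of the voltage group, which does not affect the isomorphism type of the derived graph — but since the statement only asserts existence of an isomorphism for the given $S$, this last remark is not strictly needed. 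Everything else is a direct invocation of the cited results.
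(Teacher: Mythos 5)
Your proposal is correct in substance, and it is organized genuinely differently from the paper's proof. The paper argues directly: it normalizes so that $(v_0,1_G)\in\Gamma_0$ (via Theorem~\ref{2.5.4}), introduces $H=\{h\in G : (v_0,h)\in V_0\}$, checks that $H$ is a subgroup, proves $H=\langle\gamma_S(E_\Delta)\rangle$ by a two-way inclusion (induction on the number of lifts of $S$ a path visits for one direction, lifting return walks for the other), and then constructs the isomorphism $\Gamma_0\to\Delta\ttimes{\gamma_S}\langle\gamma_S(E_\Delta)\rangle$ by hand from the partition of $\Gamma_0$ into lifts of $S$. You instead restrict the canonical covering to $\Gamma_0$, invoke Lemma~\ref{lem:voltage-assignment}, and reduce everything to identifying the lift-based assignment $\delta$ of Definition~\ref{def:voltage-assignment} with the condensation $\gamma_S$. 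That reduction is legitimate and arguably cleaner, since the isomorphism-building is delegated to an already-proved lemma; your return-walk computation (the lift of $p_e$ from the interior vertex over $v_0$ stays in $S_0$, crosses the lift of $e$, and ends in $\gamma(p_e)S_0$) does show that $\delta(e)$ is the deck transformation given by left multiplication by $\gamma_S(e)$, including the case of interior edges not in $S$, which forces $\gamma_S(e)=1_G$ there. Two small caveats: Observation~\ref{obse:condensed} is not really what carries that step --- the lifted-return-walk computation is --- and you do need the normalization $(v_0,1_G)\in\Gamma_0$ (or the conjugation bookkeeping you mention), exactly as the paper arranges via Theorem~\ref{2.5.4}.

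The one point you must not assert without proof is the regularity of $\phi_0:\Gamma_0\to\Delta$ together with the identification of its cover group with a subgroup of $G$: Lemma~\ref{lem:voltage-assignment} is stated only for regular coverings, and Definition~\ref{def:voltage-assignment} reads its voltages off $G_{\phi_0}$, so without this your appeal to the lemma is not licensed. This is precisely where the paper's explicit work with $H$ lives. The patch is short: if $x,y$ lie in the same fiber inside $\Gamma_0$, regularity of the full covering gives $g\in G$ with $gx=y$; left multiplication by $g$ is an automorphism of $\Delta\ttimes{\gamma}G$ commuting with the projection, hence permutes components, and since it sends $x\in\Gamma_0$ to $y\in\Gamma_0$ it stabilizes $\Gamma_0$; its restriction therefore lies in $G_{\phi_0}$ and carries $x$ to $y$, so $\phi_0$ is regular, and by uniqueness of lifts (Lemma~\ref{lem:lift-path}) every element of $G_{\phi_0}$ agreeing with such a restriction at one vertex equals it, which identifies $G_{\phi_0}$ with the stabilizer $\{h\in G : h\Gamma_0=\Gamma_0\}\leq G$. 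With that paragraph inserted, your argument goes through.
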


\begin{proof}
Let $\Gamma = \Delta \ttimes {\gamma} G$ and let $\phi : \Gamma \to \Delta$ be 
 the canonical covering. Choose $(v_0, g_0) \in \Gamma_0$ and a spanning tree set $S$ in $\Delta$.
We set $G'=\langle \gamma_S(E_\Delta)\rangle=\langle 
 \gamma_S(E_{\Delta}\backslash S)\rangle$. Because $\Delta$ is connected, 
 each component of $\Gamma$ covers $\Delta$, and so does $\Gamma_0$. 
Using Theorem~\ref{2.5.4}, without loss of generality, suppose that $g_0 = 
 1_G$.
Denote vertices and edges of $\Gamma_0$ with $V_0$ and $E_0$ resp., and the reference lift of $S$ in $\Gamma_0$ 
by $S_0$, so that for each 
 $g \in G$, $gS_0$ is the lift of $S$ (to $\Delta \ttimes {\gamma} G$) 
 containing $(v_0, g)$.
Let $H = \{ h \in G : (v_0, h) \in V_0 \}$, and we claim that $H\le G$.
\begin{itemize}
 \item $H$ is a set of automorphisms of $\Gamma_0$.
	If $g \in H$ and $(v, h) \in V_0$ then $g(v,h)=(v,gh)$, since $g$ maps a 
     path $p$ from $(v_0, 1_G)$ to $(v, h)$ to a path $gp$, which goes 
     from $(v_0, g)$ to $(v, gh)$.
	Thus $(v, gh) \in V_0$, so the restriction of $g$ to $\Gamma_0$ maps $V_0$ 
	 to $V_0$ (and thus $E_0$ to $E_0$).
 \item $H$ is closed under composition.
	If $g, h \in H$ and $p$ is a path in $\Gamma_0$ from $(v_0, 1_G)$ to 
	 $(v_0, h)$, then $g$ maps $p$ to a path $gp$ from $(v_0, g)$ to 
	 $(v_0, gh)$.
	As $(v_0, g) \in V_0$, so is $(v_0, gh)$, and $gh \in H$.
 \item $H$ is closed under inversion.
	If $g \in H$ and $p$ is a path in $\Gamma_0$ from $(v_0, 1_G)$ to 
	 $(v_0, g)$, then $g^{-1}$ maps $p$ to a path $g^{-1}p$ from $(v_0, g^{-1})$ 
	 to $(v_0, 1_G)$, so $(v_0, g^{-1}) \in V_0$ and $g^{-1} \in H$.
\end{itemize}

We  verify that $H = G' $.

First, we claim that $H \leq G'$.
If $h \in H$, then there is a path $p$ from $(v_0, 1_G)$ to $(v_0, h)$. 
We prove by induction on the number of lifts that $p$ visits that $h \in G'$.
If $p$ stays within a single lift of $v_0$ then $p$ ends at the vertex 
 $(v_0, 1_G)$, and $1_G\in G'$.
For the inductive step, suppose for all paths $p'$ from $(v_0, 1_G)$ to 
 $(v_0, h')$ that visit at most $k-1$ lifts of $v_0$ in $\Gamma_0$ we have $h'\in G'$, and let $p$ be a path from $(v_0, 1_G)$ to $(v_0, h)$ that visits $k$ lifts of $v_0$.
Then $p=p'q$ where $p'$ is a path
 from $(v_0, 1_G)$ to a lift containing $(v_0, h')$ visiting $k-1$ lifts of $v_0$ and $q$ is a path from that lift containing $(v_0, h')$
 to a lift containing $(v_0, h)$ that, besides these two vertices, does not visit another lift of $v_0$.
Thus $q$ runs from a lift to an adjacent lift so that $q_{\Delta}$ contains only one edge not in $S$, i.e., for some edge $e$ in $\Delta$, $q_{\Delta} = re^{\varepsilon}t$ where $r$ and $s$ are paths in $S$ while $e$ is not in $S$, and such that $r$'s initial vertex is $(v_0,h')$, the terminal vertex of $p'$ and $t$'s terminal vertex is $(v_0,h)$.
Then $\gamma(q_{\Delta}) = \gamma(r e^{\varepsilon} t) = \gamma(r) \gamma(e)^{\varepsilon} \gamma(t) = \gamma(e)^{\varepsilon}$ as $r$ and $t$ run through a lift of $S$.
Thus, as $\gamma(p_{\Delta}') \in G'$, $h = \gamma(p_{\Delta}) = \gamma(p_{\Delta}') \gamma(q_{\Delta}) = \gamma(p_{\Delta}') \gamma(e)^{\varepsilon} \in G'$.

%
 
Second, to confirm that $G' \leq H$, observe that 
for each $e_\Delta\in E_\Delta\setminus S$, the lift of $p_{e_\Delta}$, the return path of $e_\Delta$ via $S$, that starts at  $(v_0,1_G)$ has a terminal vertex at $(v_0,g)$ where $g=\gamma_S(e_\Delta)=\gamma(p_{e_\Delta})$. Since $H$ is closed under composition, $G'\le H$.

Finally, recalling that $\Gamma_0$ is the component of $\Delta \ttimes 
 {\gamma} G$ containing $(v_0, 1_G)$, we define $\varphi : \Gamma_0 \to 
 \Delta \ttimes {\gamma_S} G'$ as follows. Let $h\mapsto \hat h$ be an isomorphism from $H$ to $G'$, and 
let $V_S$ be the set of vertices incident to the reference lift $S_0$.
Observe that the sets $\hat hS_0$, $\hat h \in G'$, partition the vertices in $\Gamma_0$. We 
  set $\varphi (v_\Delta,h) = (v_\Delta,\hat h)$ for every vertex 
 $(v_\Delta, h)$ in $hV_S$.
Similarly, let $E_S$ be the set of vertices whose initial endpoints are 
 in $V_S$ (observe that $S \subseteq E_S)$, and set $\varphi(e_\Delta,h) = 
 (e_\Delta,\hat h)$ for $(e_\Delta,h)\in hS_0$.
By Observation~\ref{obse:condensed}, $\varphi$ maps lifts of 
 $S$ to lifts of $S$, implying the map is one-to-one,
 and $\varphi$ is onto as every vertex and every edge is in 
 a lift.
We see  that $\varphi$ preserves the incidence relation between edges and 
 endpoints: for an edge $e_\Delta$ in $\Delta$ we have $\varphi(\iota(e_\Delta, h)) 
 = \varphi(\iota(e_\Delta), h) = (\iota(e_\Delta), \hat h)=\iota(e_\Delta,\hat h)=\iota(\varphi(e_\Delta,h))$.
 And similarly for $\tau$.
\end{proof}

Observe that Proposition~\ref{prop:condensed_voltages} holds for every choice of a spanning set $S$.
Notice also that if $\Gamma$ is connected and $\phi : \Gamma \to \Delta$ is
 a covering and $S$ a fixed spanning tree set  of $\Delta$ and its reference lift $S_0$, then $\gamma_S$ is
  voltage assignment with respect to $S_0$ on $\Delta$, then $G_{\phi} = \langle \gamma_S(e) :  e \in S^c \rangle$.
We are interested in good covers as defined below.

\begin{defn}\label{def:good-cover}
{\rm
Consider a voltage graph $(\Delta,\gamma)$ and a covering $\mu:\Delta_\top\to \Delta$.
We say that $\mu$ is {\em good covering}  and $\Delta_\top$ a {\em good cover} if, for 
 each cyclic walk $p$ in $\Delta$ such that $\gamma(p) = 1_{G}$, 
every lift of $p$ with respect to $\mu$ in $\Delta_\top$ is a cyclic walk in $\Delta_\top$.
}
\end{defn}

Consider $\Gamma=\Delta\times_\gamma G$ with the canonical covering  $\phi:\Gamma\to \Delta$  and let $v_0\in V_\Gamma$ be a base vertex with $\phi(v_0)=v_\Delta\in V_\Delta$. Let $\Delta_\top$ be a connected cover of $\Delta$  with $\mu:\Delta_\top\to\Delta$. Let $v_\top$ be the vertex in $\Delta_\top$ such that $\mu(v_\top) 
 = v_{\Delta}$.

Define $\nu:\Gamma\to \Delta_\top$ as follows. We set  $\nu(v_0)=v_\top$ and for each edge $e$ starting at $v_0$ we define $\nu(e)$
to be the  edge $f$ in $\Delta_\top$ that starts at $v_\top$
 such that $\phi(e)=\mu(e)$. By Lemma~\ref{lem:lift-path}, $f$ is unique. 
 Inductively, for each vertex $v$ in $\Gamma$ and path $p$ that starts at $v_0$ and terminates at $v$, define $\nu(v)$ to be the 
 terminal vertex of the unique path $p_\top$ in $\Delta_\top$ that starts at $v_\top$ such that $\phi(p)=\mu(p_\top)$. By construction, $\phi=\mu\nu$,
 but $\nu$ is a covering if and only if it is well-defined.

\begin{prop}\label{prop:good-cover}
The map $\nu : \Gamma \to \Delta_\top$ is covering if and only if $\Delta_\top$ is a good  cover.
\end{prop}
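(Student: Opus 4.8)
The plan is to prove both implications directly from the definition of $\nu$ via path-lifting, using Lemma~\ref{lem:lift-path} to control lifts and Remark~\ref{rem:cyclic-walk} to translate cyclic walks in $\Gamma = \Delta\times_\gamma G$ into the voltage condition $\gamma(p)=1_G$. The central issue is well-definedness of $\nu$: by construction $\phi = \mu\nu$ holds formally, and $\nu$ sends $\Out(v)$ and $\In(v)$ bijectively to the corresponding edge sets in $\Delta_\top$ \emph{provided} the vertex assignment is consistent. So the whole statement reduces to: $\nu$ is well-defined $\iff$ $\Delta_\top$ is a good cover. I will argue this equivalence.

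For the forward direction, suppose $\nu$ is a covering (in particular well-defined). Take a cyclic walk $p$ in $\Delta$ with $\gamma(p)=1_G$ and any lift $\tilde p$ of $p$ with respect to $\mu$ starting at some vertex $v_\top'$ in $\Delta_\top$. Choose a vertex $v'$ of $\Gamma$ with $\nu(v')=v_\top'$ (possible since $\nu$ is onto). By Lemma~\ref{lem:lift-path} applied to $\phi$, there is a unique lift $q$ of $p$ in $\Gamma$ starting at $v'$; by Remark~\ref{rem:cyclic-walk}, since $\gamma(p)=1_G$, the walk $q$ is cyclic, i.e.\ it returns to $v'$. Now $\nu(q)$ is a walk in $\Delta_\top$ starting at $\nu(v')=v_\top'$ with $\mu(\nu(q)) = \phi(q) = p$, so by the uniqueness clause of Lemma~\ref{lem:lift-path} for $\mu$ we get $\nu(q) = \tilde p$. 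Since $q$ is cyclic and $\nu$ is a graph homomorphism, $\tilde p = \nu(q)$ is cyclic. Hence $\Delta_\top$ is a good cover. (One subtlety: I must make sure every vertex $v_\top'$ of $\Delta_\top$ is hit by $\nu$; this follows from $\nu$ being a covering, hence an epimorphism, once $\Gamma$ is connected — which it is in the good-cover setup, or one notes $\Delta$ connected forces each component of $\Gamma$ to cover $\Delta$, and the construction of $\nu$ is along paths from the fixed base $v_0$, so its image is the component of $v_\top$; I will restrict attention to that component, or simply note $\Delta_\top$ connected and $\nu$ covering gives surjectivity.)

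For the reverse direction, suppose $\Delta_\top$ is a good cover; I must show $\nu$ is well-defined, and then that it is a covering. Well-definedness: given a vertex $v$ of $\Gamma$ and two paths $p, p'$ from $v_0$ to $v$, I need the unique $\mu$-lifts $p_\top, p'_\top$ starting at $v_\top$ to have the same terminal vertex. Consider $p_\Delta = \phi(p)$ and $p'_\Delta = \phi(p')$; then $p_\Delta \overline{p'_\Delta}$ is a cyclic walk in $\Delta$ (from $v_\Delta$ to $v_\Delta$), and it is the $\phi$-image of the cyclic walk $p\overline{p'}$ in $\Gamma$, so by Remark~\ref{rem:cyclic-walk}, $\gamma(p_\Delta\overline{p'_\Delta}) = 1_G$. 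By the good-cover hypothesis, the lift of $p_\Delta\overline{p'_\Delta}$ starting at $v_\top$ is cyclic. But that lift is exactly $p_\top$ followed by the $\mu$-lift of $\overline{p'_\Delta}$ starting at the terminal vertex of $p_\top$; the lift of $\overline{p'_\Delta}$ from the terminus of $p'_\top$ is $\overline{p'_\top}$, ending at $v_\top$. Cyclicity then forces the terminal vertex of $p_\top$ to equal the terminal vertex of $p'_\top$ (uniqueness of lifts, Lemma~\ref{lem:lift-path} for $\mu$, applied to $\overline{p'_\Delta}$ read from $v_\top$ backwards: the lift ending at $v_\top$ is unique, so it must be $\overline{p'_\top}$, whose starting point is the terminus of $p'_\top$). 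Hence $\nu$ is well-defined on vertices, and the analogous path argument handles edges. Finally, $\nu$ being a covering: $\phi = \mu\nu$ with $\phi$ and $\mu$ coverings; for each vertex $v$ of $\Gamma$, $\phi$ maps $\Out(v)$ bijectively onto $\Out(v_\Delta)$ and $\mu$ maps $\Out(\nu(v))$ bijectively onto $\Out(v_\Delta)$, and by construction $\nu$ maps $\Out(v)$ into $\Out(\nu(v))$ compatibly; since $\mu\nu|_{\Out(v)} = \phi|_{\Out(v)}$ is a bijection onto $\Out(v_\Delta)$ and $\mu|_{\Out(\nu(v))}$ is a bijection, $\nu|_{\Out(v)}$ must be a bijection onto $\Out(\nu(v))$. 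Same for $\In(v)$. Surjectivity of $\nu$ follows since $\Delta_\top$ is connected and $\nu$ maps onto an "open and closed" set of vertices (every neighbor of a hit vertex is hit, by the $\In/\Out$ bijections). Thus $\nu$ is a covering.

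\textbf{Expected main obstacle.} The routine but fiddly part is bookkeeping the concatenation and reversal of lifted walks in the well-definedness argument — keeping straight that the $\mu$-lift of $\overline{p'_\Delta}$ starting at the right vertex is $\overline{p'_\top}$, and invoking uniqueness of lifts in the correct direction. Conceptually there is no obstacle: everything is forced by Lemma~\ref{lem:lift-path} and Remark~\ref{rem:cyclic-walk}; the good-cover condition is exactly the statement that the "obstruction cocycle" of $\nu$ vanishes, and the proof is just making that precise.
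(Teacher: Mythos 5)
Your proof is correct and follows essentially the same route as the paper's: both directions reduce to well-definedness of $\nu$ via Lemma~\ref{lem:lift-path} and Remark~\ref{rem:cyclic-walk}, with the covering property then obtained from $\phi=\mu\nu$ and the bijections on vertex figures, and surjectivity from connectedness of $\Delta_\top$. If anything, your forward direction is slightly more thorough than the paper's, which restricts attention to cyclic walks based at $v_\Delta$ and the lift starting at $v_\top$, whereas you verify cyclicity of arbitrary lifts at arbitrary basepoints by using surjectivity of $\nu$.
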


\begin{proof}
Notice that it suffices to restrict attention to cyclic walks with initial 
 and terminal vertex $v_{\Delta}$ such that $\gamma(p) = 1_{G}$.

Suppose that every cyclic walk $p$ in $\Delta$ of initial 
 and terminal vertex $v_{\Delta}$ with $\gamma(p) = 1_{G}$ lifts 
 to a cyclic walk in $\Delta_\top$.
We first see that  $\nu$ is well-defined. Suppose there are two walks $p$ and $q$  from $v_0$ to a 
 vertex $v$ in $\Gamma$.
Then as $p \bar q$ is a cyclic walk in $\Gamma$ and  $\phi (p \bar q)$ is a cyclic walk in 
 $\Delta$. As $p \bar q$ starts and ends at $v_\Delta$, by Remark~\ref{rem:cyclic-walk}, $\gamma_i (p\bar q) 
 = 1_{G_i}$.
Thus, $p \bar q$ lifts to a (unique, by Lemma~\ref{lem:lift-path}) cyclic walk from $v_{\top}$ to 
 $v_{\top}$ and therefore $\nu(p)=\nu(q)$.

To see that $\nu$ is covering, notice that by construction, $\nu$ is a 
 homomorphism and $\mu\nu=\phi$. As $\phi$ maps vertex figures of $\Gamma$ bijectively onto vertex 
 figures of $\Delta$, $\mu$ being covering implies  that vertex figures from $\Delta$ are lifted 
 bijectively to vertex figures in $\Delta_\top$, $\nu$ maps vertex figures of $\Gamma$  
 bijectively to vertex figures of $\Delta_\top$.
We see that $\nu$ is onto,  for any $u \in V_\top$, let $p_\top$ be a path in $\Delta_\top$ 
 from $v_\top$ to $u$; the path $p_\top$ exists because $\Delta_\top$ is connected. The walk $\mu( p_\top)$ lifts to a walk 
 $p$ in $\Gamma$ from $v_0$ to a vertex $u'$.
Then $\nu (p)$ = $p_\top$ and $\nu(u') = u$. Note that $p$ must be a path if $p_\top$ is a path, because every cycle in $\Gamma$ maps to a cycle in $\Delta$ with voltage $1$, which lifts to a cycle in $\Delta_\top$.

Conversely, suppose that $\nu$ as defined above is a covering. 
Assume that there exists a vertex $v \in V_\Delta$ and paths $p$ 
 and $q$ from $v_\Delta$ to $v$ such that $\mu$ lifts the cyclic path 
 $\phi(p \bar q)$ to a walk 
 $p_\top \overline{q_\top}$ in $\Delta_{\top}$ from $v_{\top}$ to a vertex $w  \neq v_\top$. Then by definition of $\nu$, $\nu(v_0)=v_\top$, and $\nu(v_0)=\nu(v_0p\bar q)=w$, which is a contradiction with $\nu$ being a covering, and hence, well defined. 
\end{proof}

\begin{defn}\label{lift_voltage}
{\rm
Let $\mu : \Delta_{\top} \to \Delta$ be a covering and  $\gamma : E_{\Delta} 
 \to G$ a voltage assignment.
We say that the voltage assignment  $\gamma' : E_{\Delta_{\top}} \to G : e \to \gamma(\mu(e))$ is the  {\em lift of $\gamma$}.
}
\end{defn}
 
We use the following theorem \cite[Theorem 5.5]{Sunada} to prove our next result. 

\begin{theo}\label{thm:sunada}
Let $\nu:\Gamma \to \Delta_\top$ and $\mu:\Delta_\top \to \Delta$  coverings such that $\phi=\mu\nu$. If $\phi$ is regular, then so is  $\nu$. The group  $G_\nu$ is a normal subgroup of $G_\phi$ and $G_\mu \cong G_\phi/G_\nu$ if and only if $\mu$ is regular. 
\end{theo}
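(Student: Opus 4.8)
The plan is to prove Theorem~\ref{thm:sunada} by reducing everything to the action of $G_\phi$ on $\Gamma$ and using the factorization $\phi = \mu\nu$ together with the characterization of regular coverings in terms of free actions from Proposition~\ref{FreeCover} and Lemma~\ref{lem:regular}. First I would observe that since $G_\phi \le \Aut(\Gamma)$ acts freely on $\Gamma$ (by Proposition~\ref{FreeCover}, $\phi$ regular implies $G_\phi$ acts freely), every subgroup of $G_\phi$ also acts freely. So to show $\nu$ is regular it suffices to exhibit a subgroup $H \le G_\phi$ with $\Delta_\top \cong \Gamma/H$ and $\nu$ the canonical projection; then regularity of $\nu$ follows from the remark after Lemma~\ref{lem:regular} that canonical coverings $\Gamma \to \Gamma/H$ are regular. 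The natural candidate is $H = G_\nu = \{g \in \Aut(\Gamma) : \nu g = \nu\}$, and the key claim is $G_\nu \le G_\phi$: if $\nu g = \nu$ then $\phi g = \mu \nu g = \mu \nu = \phi$, so indeed $G_\nu \le G_\phi$. Then $G_\nu$ acts freely on $\Gamma$, which by Proposition~\ref{FreeCover} gives that $\Gamma$ covers $\Gamma/G_\nu$; one then checks $\Delta_\top \cong \Gamma / G_\nu$ via the canonical isomorphism (using that $\nu$ is onto and that the fibers of $\nu$ are exactly the $G_\nu$-orbits — this last point is where regularity of $\nu$ really gets used, so I would prove the orbit-equals-fiber statement directly rather than citing Lemma~\ref{lem:regular} in a circular way).

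The cleanest route to ``fiber of $\nu$ $=$ orbit of $G_\nu$'' is a lifting argument: given $x, y \in \Gamma$ with $\nu(x) = \nu(y)$, also $\phi(x) = \phi(y)$, so since $\phi$ is regular there is $g \in G_\phi$ with $gx = y$; I must show this $g$ actually lies in $G_\nu$, i.e. $\nu g = \nu$. For this, fix a base vertex and use Lemma~\ref{lem:lift-path} (unique path lifting for the covering $\nu$): $\nu g$ and $\nu$ are both covering-compatible homomorphisms $\Gamma \to \Delta_\top$ agreeing at one vertex in each connected component that meets the relevant fiber, hence agree everywhere by uniqueness of lifts. Care is needed about connectivity — $\Delta$ is connected and $\Gamma$ covers it, but I should argue component-by-component or note that the construction in the previous propositions keeps $\Gamma$ connected; in the application we care about, $\Gamma$ is connected. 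Once $g \in G_\nu$, the fiber of any vertex/edge of $\Delta_\top$ is a single $G_\nu$-orbit, so $\nu$ is regular and $\Delta_\top \cong \Gamma/G_\nu$.

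For the normality and quotient statement: $G_\mu \le \Aut(\Delta_\top)$ acts on $\Delta_\top \cong \Gamma/G_\nu$. Each $g \in G_\phi$ descends to an automorphism $\bar g$ of $\Gamma/G_\nu$ precisely when $g$ normalizes $G_\nu$ (so that the map $G_\nu x \mapsto G_\nu gx$ is well-defined), and $\bar g \in G_\mu$ because $\mu \bar g$ corresponds to $\phi g = \phi$. So I would first show $G_\nu \trianglelefteq G_\phi$: take $g \in G_\phi$, $h \in G_\nu$; then $ghg^{-1} \in G_\phi$, and $\nu(ghg^{-1}x) = \nu(ghg^{-1}x)$ — to see this equals $\nu(x)$, apply the fiber characterization, since $\phi(ghg^{-1}x) = \phi(x)$ forces $ghg^{-1} \in G_\nu$ once we know every element of $G_\phi$ fixing $\nu$-fibers setwise actually lies in $G_\nu$; more directly, $g h g^{-1}$ sends each $G_\nu$-orbit to a $G_\nu$-orbit over the same $\phi$-fiber, but distinct $G_\nu$-orbits in one $\phi$-fiber would contradict $\nu$ being well-defined as a function with those fibers — I would phrase this carefully via: $h \in G_\nu \iff h$ acts trivially on $\Gamma/G_\nu \iff h \in \ker(G_\phi \to \Aut(\Gamma/G_\nu))$, and the kernel of a homomorphism is normal. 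That gives $G_\nu \trianglelefteq G_\phi$ unconditionally (given $\phi$ regular). The map $G_\phi \to \Aut(\Gamma/G_\nu) \cong \Aut(\Delta_\top)$ has image inside $G_\mu$ and kernel $G_\nu$, so $G_\phi/G_\nu \hookrightarrow G_\mu$. This embedding is onto $G_\mu$ — equivalently $\mu$ is regular — exactly when $G_\phi$ acts transitively on each $\mu$-fiber, which is the definition of $\mu$ being regular. So the iff in the last sentence comes down to: the natural injection $G_\phi/G_\nu \to G_\mu$ is surjective $\iff$ $\mu$ regular. The main obstacle I anticipate is handling the well-definedness/orbit-equals-fiber bookkeeping for $\nu$ without circularity and with proper attention to connectivity; everything else is a routine transport of the quotient-group picture along the covering factorization.
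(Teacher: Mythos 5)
The paper itself gives no proof of this theorem (it is imported verbatim from Sunada), so your argument has to stand on its own. Your first half is the standard argument and is correct: $G_\nu\le G_\phi$ because $\nu g=\nu$ forces $\phi g=\mu\nu g=\phi$; and if $\nu(x)=\nu(y)$ then $\phi(x)=\phi(y)$, regularity of $\phi$ yields $g\in G_\phi$ with $gx=y$, and then $\nu g$ and $\nu$ are two lifts of $\phi$ through the covering $\mu$ (not through $\nu$, as your parenthesis says) agreeing at a vertex, hence equal on the connected graph $\Gamma$ by unique walk lifting. So $G_\nu$ acts transitively on $\nu$-fibers and $\nu$ is regular.

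The second half contains a genuine error: $G_\nu$ is \emph{not} normal in $G_\phi$ unconditionally; normality of $G_\nu$ is itself equivalent to regularity of $\mu$, which is how the theorem's second sentence must be parsed (the conjunction ``$G_\nu\trianglelefteq G_\phi$ and $G_\mu\cong G_\phi/G_\nu$'' is what is equivalent to ``$\mu$ is regular''). Concretely, let $\Delta$ be one vertex with two loops $a,b$, let $\Gamma$ be its universal cover (the $4$-regular tree), so $\phi$ is regular with $G_\phi\cong F_2=\langle a,b\rangle$, and let $\Delta_\top$ be the intermediate cover corresponding to the non-normal subgroup $\langle a\rangle$; then $G_\nu=\langle a\rangle$ is not normal in $G_\phi$, and indeed $\mu$ is not regular. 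The flaw in your argument is the step ``$h\in G_\nu$ if and only if $h\in\ker\bigl(G_\phi\to\Aut(\Gamma/G_\nu)\bigr)$'': the assignment $g\mapsto\bar g$, $G_\nu x\mapsto G_\nu gx$, is well defined only when $g$ normalizes $G_\nu$ --- exactly the caveat you stated one sentence earlier --- so invoking ``kernels are normal'' presupposes the normality you are trying to prove. Your fallback, ``$\phi(ghg^{-1}x)=\phi(x)$ forces $ghg^{-1}\in G_\nu$,'' also fails: a single $\phi$-fiber contains one $\nu$-fiber for each point of $\mu^{-1}(\phi(x))$, and $ghg^{-1}$ may permute these $\nu$-fibers nontrivially (as it does in the free-group example). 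The repair is to use regularity of $\mu$ exactly where it is needed: given $g\in G_\phi$, regularity of $\mu$ provides $a\in G_\mu$ with $a(\nu(x_0))=\nu(gx_0)$; then $a\nu$ and $\nu g$ are lifts of $\phi$ through $\mu$ agreeing at $x_0$, so $\nu g=a\nu$, and hence $\nu\, ghg^{-1}=a\nu h g^{-1}=a\nu g^{-1}=aa^{-1}\nu=\nu$, i.e.\ $ghg^{-1}\in G_\nu$. Conversely, when $G_\nu\trianglelefteq G_\phi$ your descended map $g\mapsto\bar g$ \emph{is} defined, has kernel $G_\nu$, lands in $G_\mu$, acts transitively on $\mu$-fibers (so $\mu$ is regular), and is onto $G_\mu$ by the same unique-lifting argument; this gives both remaining implications correctly.
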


We are now ready for the primary result of this section, which is a  
 prerequisite for the results in Section~\ref{LVI}.

\begin{prop}\label{Lift_voltages-new}
Let $\Gamma$, $\Delta_\top$ and $\Delta$ be connected and let $\nu : \Gamma \to \Delta_{\top}$ and $\mu : \Delta_{\top} \to \Delta$ be regular coverings, so that $\phi = \mu \circ \nu : \Gamma \to \Delta$ is also a regular covering.
Let  $\gamma : E_{\Delta} \to G_{\phi}$ be a condensed voltage 
 assignment.
If $\gamma_\top$ is a condensation of a lift of $\gamma$ in $\Delta_\top$ with respect to some spanning tree set 
in $\Delta_\top$, 
 then $\Gamma \cong \Delta_{\top} 
 \ttimes {\gamma_\top} \langle \gamma_\top (E_{\Delta_\top}) \rangle$. 
\end{prop}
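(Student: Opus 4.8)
The plan is to reduce the statement to an application of Lemma~\ref{lem:voltage-assignment} applied to the covering $\nu : \Gamma \to \Delta_\top$, together with Proposition~\ref{prop:condensed_voltages} to pass to the condensation. First I would observe that, since $\phi = \mu\nu$ is regular and $\mu$ is regular, Theorem~\ref{thm:sunada} gives that $\nu$ is regular and that $G_\nu \trianglelefteq G_\phi$ with $G_\mu \cong G_\phi/G_\nu$. Let $\gamma'$ denote the lift of $\gamma$ to $\Delta_\top$ in the sense of Definition~\ref{lift_voltage}, i.e., $\gamma'(e) = \gamma(\mu(e))$. The key claim I would establish is that $\Gamma \cong \Delta_\top \ttimes{\gamma'} G_\phi$ as covers of $\Delta_\top$, where $G_\phi = \langle\gamma(E_\Delta)\rangle$ since $\gamma$ is condensed (using Observation~\ref{obse:condensed} / the remark following Proposition~\ref{prop:condensed_voltages} that $G_\phi = \langle\gamma_S(e): e\in S^c\rangle$ for a condensed assignment).

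The heart of the argument is to show that $\gamma'$ is (up to the identification $\Delta\cong\Delta_\top/G_\mu$-lift structure) a voltage assignment on $\Delta_\top$ with respect to a suitable proper lift $\Lambda_\top$ of $\Delta_\top$ in $\Gamma$, so that Lemma~\ref{lem:voltage-assignment} applies directly and yields $\Gamma\cong\Delta_\top\ttimes{\gamma'}\langle\gamma'(E_{\Delta_\top})\rangle$. To do this I would: (i) fix a spanning tree set $S$ of $\Delta$ witnessing that $\gamma$ is condensed, pick the reference lift $\Lambda$ of $\Delta$ in $\Gamma$ with respect to $S$ supplied by Observation~\ref{obse:condensed}, with interior vertices $(v,1_G)$; (ii) use that $\mu$ is a covering to choose a spanning tree set $S_\top$ of $\Delta_\top$ whose $\mu$-image contains $S$ — more precisely, lift $S$ through $\mu$ into $\Delta_\top$ and extend to a spanning tree set $S_\top$ of $\Delta_\top$ (this is where the structure of $\mu$ as a finite cover is used); (iii) observe that every edge $e$ of $\Delta_\top$ with $\gamma'(e)\neq 1$ is precisely an edge over an $E_\Delta\setminus S$-edge, and check via Lemma~\ref{lem:lift-path} and the partition $\{gV_\Lambda^0 : g\in G_\phi\}$ that the value $\gamma'(e) = \gamma(\mu(e))\in G_\phi = G_\nu$-coset-data is exactly the element $g\in G_\nu$ connecting the two adjacent lifts of $\Delta_\top$ meeting at $e$, i.e., $\gamma'$ is the voltage assignment of $\nu$ with respect to the induced reference lift $\Lambda_\top$. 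Then Lemma~\ref{lem:voltage-assignment} gives $\Gamma\cong\Delta_\top\ttimes{\gamma'}\langle\gamma'(E_{\Delta_\top})\rangle$.

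Finally I would pass from $\gamma'$ to its condensation $\gamma_\top$: apply Proposition~\ref{prop:condensed_voltages} to the connected voltage graph $(\Delta_\top,\gamma')$ — noting $\Delta_\top\ttimes{\gamma'}\langle\gamma'(E_{\Delta_\top})\rangle$ is connected because it is isomorphic to the connected graph $\Gamma$, so it is its own sole connected component — which yields, for any spanning tree set of $\Delta_\top$ and the corresponding condensation $\gamma_\top$, that $\Gamma\cong(\Delta_\top\ttimes{\gamma'}\langle\gamma'(E_{\Delta_\top})\rangle)\cong\Delta_\top\ttimes{\gamma_\top}\langle\gamma_\top(E_{\Delta_\top})\rangle$, as desired.

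\textbf{Main obstacle.} I expect the main difficulty to be step (ii)--(iii): carefully setting up the reference lift $\Lambda_\top$ of $\Delta_\top$ inside $\Gamma$ that is \emph{compatible} with the already-fixed reference lift $\Lambda$ of $\Delta$ (so that interior edges of $\Lambda_\top$ are exactly the $\gamma'$-trivial edges, and the boundary edge voltages match the group elements of $G_\nu$ realized geometrically), and verifying that the lift of the condensed $\gamma$ through $\mu$ is genuinely of the form required by Definition~\ref{def:voltage-assignment} for $\nu$. The normality $G_\nu\trianglelefteq G_\phi$ from Theorem~\ref{thm:sunada} is what makes the coset bookkeeping consistent; the bulk of the work is checking that the combinatorial data lines up, not any deep new idea.
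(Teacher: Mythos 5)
Your skeleton (lift $\gamma$ through $\mu$, then condense and invoke Proposition~\ref{prop:condensed_voltages}) is the paper's, but your pivotal intermediate claim is false. Since $\mu$ is onto, $\gamma'(E_{\Delta_\top})=\gamma(E_\Delta)$, so $\langle\gamma'(E_{\Delta_\top})\rangle=G_\phi$; however $\gamma'$ is not condensed (a trivial-voltage edge of $\Delta$ always lifts inside a single lift of $\Delta$ in $\Delta_\top$, so the $\gamma'$-trivial edges form a spanning forest, never a spanning tree, once $\mu$ is nontrivial), and by Theorem~\ref{2.5.4} the derived graph $\Delta_\top\ttimes{\gamma'}G_\phi$ splits into $[G_\phi:\langle\gamma_\top(E_{\Delta_\top})\rangle]$ mutually isomorphic components, an index which the paper identifies with $[G_\phi:G_\nu]=|G_\mu|$. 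Hence $\Delta_\top\ttimes{\gamma'}\langle\gamma'(E_{\Delta_\top})\rangle$ cannot be isomorphic to the connected graph $\Gamma$ unless $\mu$ is an isomorphism. Concretely: take $\Gamma$ the Cayley graph of $\bbbZ$, $\Delta$ a single vertex with one loop of voltage $1$, and $\Delta_\top=\Gamma/2\bbbZ$ (two vertices, two edges); then $\gamma'$ assigns both edges voltage $1$ and $\Delta_\top\ttimes{\gamma'}\bbbZ$ is two disjoint copies of $\Gamma$, while the condensation assigns voltages $0$ and $2$, generating $2\bbbZ\cong G_\nu$, and only then recovers $\Gamma$. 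The same example shows why Lemma~\ref{lem:voltage-assignment} cannot be applied to $\gamma'$ and the covering $\nu$: Definition~\ref{def:voltage-assignment} requires the voltages to be the elements of the cover group $G_\nu$ joining adjacent lifts of $\Delta_\top$ in $\Gamma$, whereas $\gamma'$ takes values in $G_\phi$ that in general lie outside $G_\nu$. Your step (iii), which identifies $\gamma'(e)$ with such an element of $G_\nu$, is exactly where the argument breaks, and your later justification that $\Delta_\top\ttimes{\gamma'}\langle\gamma'(E_{\Delta_\top})\rangle$ is connected ``because it is isomorphic to $\Gamma$'' is circular.

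What the paper proves instead is that every \emph{component} of $\Delta_\top\ttimes{\gamma'}G_\phi$ is isomorphic to $\Delta\ttimes{\gamma}G_\phi\cong\Gamma$, by following a path from the base vertex of the component containing $(v_\top,1)$ and computing its $\gamma'$-voltage edge by edge (Theorem~\ref{2.5.4} guarantees all components are isomorphic), and only then applies Proposition~\ref{prop:condensed_voltages} to that one component; the condensation is precisely the step that shrinks the voltage group from $G_\phi$ to a copy of $G_\nu$ and restores connectivity, and since Proposition~\ref{prop:condensed_voltages} holds for every spanning tree set, the arbitrary choice of $S_\top$ in the statement is covered automatically. If you prefer to keep the Lemma~\ref{lem:voltage-assignment} route, you must apply it to the condensation $\gamma_\top$ rather than to $\gamma'$, i.e.\ verify that $\gamma_\top$ is a voltage assignment with respect to a proper lift of $\Delta_\top$ in $\Gamma$; this uses the normality $G_\nu\trianglelefteq G_\phi$ from Theorem~\ref{thm:sunada} that you cite (it is essentially the bookkeeping the paper carries out in the discussion following its proof), but it also obliges you to handle an arbitrary spanning tree set, which the paper's route gets for free. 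So the missing content is exactly the path-tracking identification of a component of $\Delta_\top\ttimes{\gamma'}G_\phi$ with $\Gamma$; as written, your reduction substitutes for it a statement that fails whenever $\mu$ is a proper covering.
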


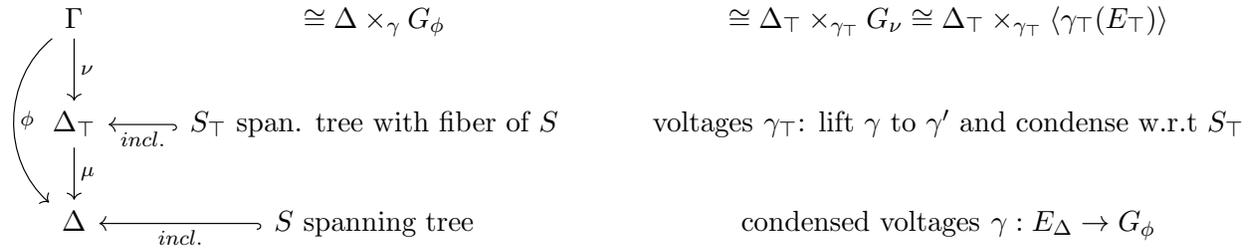
\begin{figure}[ht]
   \begin{tikzcd}
        \Gamma \arrow[d,"{\nu}"] 
        \arrow[dd, bend right = 50, "{\phi}"] &
         \cong \Delta \times_\gamma G_\phi & 
         \cong \Delta_\top \times_{\gamma_\top} G_\nu \cong\Delta_\top\times_{\gamma_\top}\langle \gamma_\top(E_\top)\rangle
        \\
       %
        \Delta_{\top} \arrow[d, "{\mu}"] 
        & S_\top \textrm{ span. tree with fiber of $S$}\arrow [l,"{incl.}", hook] 
       & \textrm{voltages $\gamma_\top$: lift $\gamma$ to $\gamma'$ and condense w.r.t $S_\top$}\\
        \Delta & S \textrm{ spanning tree}\arrow [l,"{incl.}", hook]  & \textrm{condensed voltages $\gamma: E_\Delta \to G_\phi$}\\
    \end{tikzcd}
\caption{Situation of Prop.~\ref{Lift_voltages-new} and its proof.}
\label{componsition}
\end{figure}

\begin{proof}
Let $S$ be a spanning tree set of $\Delta$. By 
 Observation~\ref{ProperLift}, there exists a proper lift $\Lambda_0$ of 
 $\Delta$ in $\Delta_{\top}$ containing a tree set $S_{0}$ that is a 
 lift of $S$ (Figure~\ref{componsition}).
As $G_{\mu}$ is regular, by Lemma~\ref{partition}, the union of the lifts $g S_{0}$, 
 $g \in G_{\mu}$, is the edge set of a spanning forest $F_{\top}$; by 
 judiciously adding edges joining the trees together without forming any 
 cycles, one obtains spanning tree set $S_{\top}$ of $\Delta_{\top}$.

Similarly, having $S_{\top}$ as a spanning tree set of $\Delta_{\top}$, 
 there is a proper lift $\Lambda_{\top}$ of $\Delta_{\top}$ in $\Gamma$ with respect to $S_\top$, and $\Lambda _\top$ contains a lift of $\Delta$, called $\Lambda_\Delta$ containing $S_\Delta$, a lift of $S$ (Figure~\ref{fig:covers}).
We choose an interior vertex $v_0$ of $\Lambda_{\Delta}$ as a 
 reference vertex of $\Gamma$.
Let  $v_{\top} = \nu (v_0)$ be the reference vertex of $\Delta_{\top}$ and $v_{\Delta} =\mu (v_{\top}) = \phi (v_0)$ the  base vertex of $\Delta$.
As with $\mu$, as $\nu$ is regular, by Lemma~\ref{partition},
the sets $hV_\top^0$, $h \in G_{\nu}$ partition the set $V_{\Gamma}$.

Since $G_\nu$ is a subgroup of $G_\phi$ and the voltages of $\Delta$ generate $G_\phi$, there are two possibilities for each edge $e \in E_{\Delta}$: either $h=\gamma(e_\Delta) \in G_\nu$ or $g=\gamma(e_\Delta)\not \in G_\nu$ (see Fig.~\ref{fig:covers}).
As $h\in G_\nu$, $h\Lambda_\Delta$ is a lift within $h\Lambda_\top$, and $\nu (h\Lambda_\top)=\nu (\Lambda_\top)=\Delta_\top$, hence $\nu (h\Lambda_\Delta)=\Lambda_0$.
On the other hand, if $g\not \in G_\nu$, then $\nu g\not = \nu$
and $g\Lambda_\top \not = \Lambda_{\top}$, so that these two lifts do not share interior vertices. However, $g\Lambda_\Delta$ is a lift of $\Delta$ under $\phi=\mu\nu$, and treating $G_{\mu}$ as $G_{\phi} / G_{\nu}$, 
$\nu(g\Lambda_\Delta)$ is $g G_{\nu} \Lambda_0$, a lift of $\Delta$ in $\Delta_\top$.

We obtain a voltage assignment $\gamma_{\top}$ from $\gamma$.
Assign voltages $\gamma':E_{\Delta_\top}\to G_\phi$ by lifting voltages 
 of $\Delta$ to $\Delta_\top$ according to $\mu$, i.e.,  $e\in 
 E_{\Delta_\top}$, $\gamma'(e)=\gamma(\mu(e))$.

Observe that every component of $\Delta_{\top} \ttimes {\gamma'} G_{\phi}$ is isomorphic to $\Delta \ttimes {\gamma} G_{\phi}$.
By Theorem~\ref{2.5.4}, the components of $\Delta_{\top} \ttimes {\gamma'} G_{\phi}$  are isomorphic, so it suffices to prove the isomorphism for the component $\Gamma_{\top}$ containing $(v_{\top}, 1)$, where $1$ is the identity of $G_{\phi}$.

For any vertex $(v, g)$ in $\Gamma_{\top}$, there is a path $p$ from $(v_{\top}, 1)$ to $(v, g)$, and the covering $\Gamma_{\top} \to \Delta_{\top} : (u, h) \mapsto u$ maps $p$ to a walk $p_{\top}$ in $\Delta_{\top}$ from $v_{\top}$ to $v$.
Let $p_{\top} = s_0 e_1^{\varepsilon(1)} s_1 e_2^{\varepsilon(1)} s_3 \cdots s_{n1} e_n^{\varepsilon(n)} s_n$, where $e_i$ is a lift of an edge not in $S$, and each $s_i$ is a lift of $S$,  $\varepsilon_i = 1$ if $s_{i-1}=\iota(e_i)$. Then 
 $\gamma'(p_\top) = \prod_{i\le n} \gamma'(e_i)^{\varepsilon(i)}$.
By induction on the length of $p$, we find that if $(u_j, h_j)$ is the terminal vertex of $s_j$, then $h_j = \prod_{i<j} \gamma'(e_i)^{\varepsilon(i)}$ and thus $\gamma'(p) = g$.
So $\Gamma_\top \to \Delta \ttimes {\gamma} G_{\phi} : (v, g) \mapsto (v, g): (e, g) \mapsto (e, g)$ is an isomorphism.

By Proposition~\ref{prop:condensed_voltages}, 
for the condensation $\gamma_{\top}$ of $\gamma'$ with respect to $S_{\top}$ we have  that  
$\Delta_{\top} \ttimes {\gamma_{\top}} \langle \gamma_{\top}(E_{\top}) \rangle$ is isomorphic to any component of $\Delta_{\top} \times_{\gamma'} G_{\phi}$ and hence to $\Gamma_\top\equiv \Gamma$.
\end{proof}

\begin{figure}[h]
\centerline{
\begin{overpic}
[width = 3.5in]{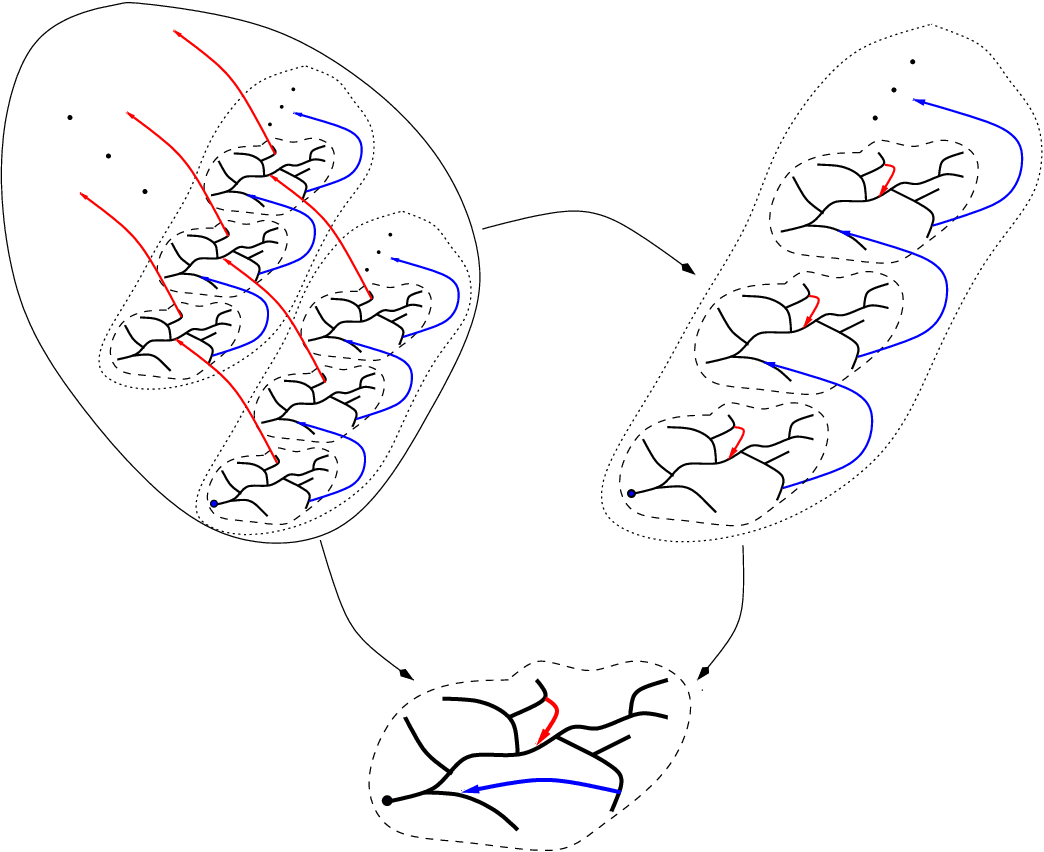} 
\put(-10,65){$\Gamma$}\put(22,75){{\small{$h\Lambda_\top$}}}
\put(30,0){$\Delta$}\put(36,7){{\small{$v_\Delta$}}}
\put(37,62){$\Lambda_\top$}
\put(62,42){$\Lambda_0$} \put(58,32){\small{$v_\top$}}
\put(53,63){$\nu$}\put(73,23){$\mu$}\put(30,20){$\phi$}
\put(20,39){{\Small{$\Lambda_\Delta$}}}
\put(17,30){{\Small{$v_0$}}}
\put(95,50){$\Delta_\top$} \put(90,56){{\small{$g$}}}\put(84,40){{\small{$g$}}}
 \put(50,5){{\small{$g$}}}\put(54,13){{\small{$h$}}}
\end{overpic}}
\caption{Situation in Proposition~\ref{Lift_voltages-new}, blue edges have voltages $g\in G_\phi\setminus G_\nu$; red edges have voltages $h\in G_\nu$}	  
\label{fig:covers}
\end{figure}

We overview the situation in Prop.~\ref{Lift_voltages-new}.
    The voltages $\gamma':E_{\Delta_\top}\to G_\phi$ are assigned by lifting voltages of $\Delta$ to $\Delta_\top$ according to $\mu$, i.e.,  $e\in E_{\Delta_\top}$, $\gamma'(e)=\gamma(\mu(e))$. 
    Then edges with voltages in $G_\nu$ are in the interior of $\Lambda_0$ as well as in interiors of other lifts of $\Delta$ within its fiber in $\Delta_\top$. 
    While edges with voltages $g\in G_\phi\setminus G_\nu$ are edges in $\partial \Lambda_0$ (as well as other boundaries of lifts of copies of $\Lambda_0$)  connecting $\Lambda_0$ with $g\Lambda_0$. 
    Moreover, all edges connecting vertices from $\Lambda_0$ to $g\Lambda_0$ must have edges with voltages $g\in G_\phi\setminus G_\nu$ since all interior edges are with voltages $1_G$ or $h\in G_\nu$. Similarly, if there is an edge with voltage $g$ connecting a vertex from $g_1\Lambda_0$ with $g_2\Lambda_0$ for some $g, g_1$ and $g_2$, then $gg_1=g_2$.
    Observe that $\Lambda_0$ and all $g\Lambda_0$ 
    have lifts of a spanning tree set $S$ in $\Delta$  whose edges have voltages $1_G$. 
    Let $S_0$ be the lift of $S$ in $\Lambda_0$ and $v_\top$ in $S_0$ a lift of $v_0$. We can  form a spanning tree set $S_\top$ as a subgraph of $\Delta_\top$ containing the lifts of $S$ 
    (i.e., $\cup gS_0$),  adding edges from $\partial \Lambda_0$ and its copies that connect adjacent lifts of $\Delta$. We add one edge for each pair of adjacent lifts such that the resulting set forms a spanning tree set $S_\top$. 
    We assign values $1_G$ to those edges, and perform condensation $\gamma_\top$ with respect to $S_\top$ and the base vertex $v_\top$. The condensation changes voltages.

    If $e$ is in interior of $g\Lambda_0$ then its voltage changed from $h\in G_\nu$ to $g^{-1}hg$ which is in $G_\nu$ since $G_\nu$ is normal in $G_\phi$. On the other side, if an edge $e'$ is in 
    $\partial g\Lambda_0$, adjacent to $\Lambda_0$, following Observation~\ref{obse:condensed}, its voltage changes from $g$
    to $gg^{-1}=1_G$. This is because if $\gamma'(e)=g\not\in G_\nu$ connecting adjacent lifts $S_0$ and $gS_0$, the return walk of $e$ from $v_\top$ via $S_\top$ traverses a path $p_1ep_2e'p_3$ where $p_1$ and $p_3$ are paths within $S_0$ from $v_\top$ to the $\iota(e)$, and $\tau(e')$  to $v_\top$ in $S_0$, respectively, with $e'\in S_\top$ while $p_2$ is within $gS_0$ from $\tau(e)$ to $\iota(e')$. 
    Then by Observation~\ref{obse:condensed}, $g=\gamma'(e)=\gamma'(e')^{-1}$. Inductively, the situation is the same for voltages of all edges joining adjacent lifts of $S_0$. The partitions of graphs defined with covers $\phi,\nu,\mu$ are illustrated in Figure~\ref{fig:covers}.

\section{Isomorphic Covers}\label{LVI}

In this section we prove our main result showing the following. Given two finite connected graphs with voltage assignment that are compatible in a sense to be defined below, they generate isomorphic derived graphs if and only if there is a common finite cover of the two graphs such that the lifts of the voltage assignments induce and isomorphism between their respective groups. 

We recall the following, from \cite{Skoviera, KL0}.

\begin{theo}\label{thm:iso-voltage0}
Let $\Delta$ be a connected finite graph with condensed voltage assignments $\gamma_i : E_{\Delta} \to G_i$ $(i=1,2)$ with respect to the same spanning tree set $S$. 
Let $\Gamma_i=\Delta \ttimes {\gamma_i} G_i$, $i =1,2$, be their derived graphs and 
 $\phi_i: \Gamma_i 
 \to \Delta$ be the 
 corresponding coverings.
There exists an isomorphism $\varphi : \Gamma_1 \to \Gamma_2$ 
  such that $\phi_2\varphi=\phi_1$ if and only if 
 the map on voltage assignments $\gamma_1(e) \mapsto \gamma_2(e)$, $e \in E_{\Delta}$, induces a group 
 isomorphism of $G_1$ onto $G_2$.
\end{theo}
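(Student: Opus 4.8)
The plan is to prove both directions by tracking how the common base graph $\Delta$ and its spanning tree $S$ organize the fibers of $\phi_1$ and $\phi_2$. I would first fix notation: write $\Gamma_i = \Delta \times_{\gamma_i} G_i$ with vertices $V_\Delta \times G_i$ and edges $E_\Delta \times G_i$, let $\phi_i$ be the canonical covering, and recall that since the voltage assignments are condensed with respect to the same $S$, each reference lift $S_0^{(i)} = S \times \{1_{G_i}\}$ is a proper interior tree for $\phi_i$ and the orbits $\{g S_0^{(i)} : g \in G_i\}$ partition $V_{\Gamma_i}$ (this is the setup from Observation~\ref{obse:condensed} and Lemma~\ref{lem:voltage-assignment}, with $G_{\phi_i} \cong G_i$ because the assignment is condensed and $G_i = \langle \gamma_i(E_\Delta)\rangle$).

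For the ``if'' direction: suppose $\theta : G_1 \to G_2$ is a group isomorphism with $\theta(\gamma_1(e)) = \gamma_2(e)$ for all $e \in E_\Delta$. Define $\varphi : \Gamma_1 \to \Gamma_2$ on vertices by $\varphi(v, g) = (v, \theta(g))$ and on edges by $\varphi(e, g) = (e, \theta(g))$. This is a bijection because $\theta$ is. To check it is a homomorphism, I would verify incidences directly: $\varphi(\iota(e,g)) = \varphi(\iota(e), g) = (\iota(e), \theta(g)) = \iota(e, \theta(g)) = \iota(\varphi(e,g))$, and on the terminal side $\varphi(\tau(e,g)) = \varphi(\tau(e), g\gamma_1(e)) = (\tau(e), \theta(g)\theta(\gamma_1(e))) = (\tau(e), \theta(g)\gamma_2(e)) = \tau(e, \theta(g)) = \tau(\varphi(e,g))$, using multiplicativity of $\theta$ and the compatibility hypothesis. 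Since $\varphi$ is a bijective homomorphism it is an isomorphism, and $\phi_2 \varphi (x,g) = \phi_2(x, \theta(g)) = x = \phi_1(x,g)$, so $\phi_2 \varphi = \phi_1$.

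For the ``only if'' direction: suppose $\varphi : \Gamma_1 \to \Gamma_2$ is an isomorphism with $\phi_2 \varphi = \phi_1$. I would first normalize: composing $\varphi$ with the automorphism $(v, g) \mapsto (v, hg)$ of $\Gamma_2$ (an element of the $G_2$-action defined above), I may assume $\varphi(v_0, 1_{G_1}) = (v_0, 1_{G_2})$ where $v_0$ is the base vertex of $S$. The key claim is that $\varphi$ maps the reference lift $S \times \{1_{G_1}\}$ onto $S \times \{1_{G_2}\}$: since $\phi_2 \varphi = \phi_1$, $\varphi$ sends each fiber onto the corresponding fiber, and since $\varphi$ is a graph isomorphism fixing $(v_0, 1)$ and $S \times \{1\}$ is the unique connected lift of the tree $S$ through that vertex consisting of trivial-voltage edges, $\varphi$ must carry it to the analogous lift in $\Gamma_2$ — here I would use that $S$ is a \emph{tree}, so there is a unique walk in $S$ from $v_0$ to any vertex, hence a unique lift starting at $(v_0,1)$, combined with Lemma~\ref{lem:lift-path}. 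Having established this, define $\theta : G_1 \to G_2$ by: for $g \in G_1$, $\varphi$ maps the lift $g S_0^{(1)}$ (which contains $(v_0, g)$) to some lift $S''$ of $S$ in $\Gamma_2$, which contains $(v_0, \theta(g))$ for a unique $\theta(g) \in G_2$; equivalently $\varphi(v_0, g) = (v_0, \theta(g))$. This $\theta$ is a bijection since $\varphi$ permutes the (disjoint, covering) collection of $S$-lifts and $\Gamma_2$ is connected. To see $\theta$ is a homomorphism and respects voltages, I would trace an edge $(e, g) \in \partial(g S_0^{(1)})$ with $\gamma_1(e) \neq 1$: by Observation~\ref{obse:condensed} its terminal vertex lies in $(g\gamma_1(e)) S_0^{(1)}$; applying $\varphi$ and using that $\varphi$ preserves incidence and maps $S$-lifts to $S$-lifts, the image edge $(e, \theta(g))$ in $\Gamma_2$ runs from $\theta(g) S_0^{(2)}$ to $\theta(g\gamma_1(e)) S_0^{(2)}$, but by the same Observation applied in $\Gamma_2$ its terminal vertex lies in $(\theta(g)\gamma_2(e)) S_0^{(2)}$; disjointness of the lifts forces $\theta(g \gamma_1(e)) = \theta(g)\gamma_2(e)$. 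Taking $g = 1_{G_1}$ gives $\theta(\gamma_1(e)) = \gamma_2(e)$ (using $\theta(1_{G_1}) = 1_{G_2}$), and then an induction on word length in the generators $\gamma_1(E_\Delta)$ of $G_1$ — writing any $g = \gamma_1(e_1)^{\pm 1}\cdots \gamma_1(e_k)^{\pm 1}$ and iterating the identity along the corresponding return walks — yields $\theta(gg') = \theta(g)\theta(g')$ for all $g, g'$, so $\theta$ is the desired group isomorphism.

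The main obstacle is the ``only if'' direction, specifically showing cleanly that $\varphi$ must carry lifts of the spanning tree $S$ to lifts of $S$ and that the resulting vertex-level bijection $\theta$ on the fibers over $v_0$ is actually a group homomorphism rather than merely a bijection — this is exactly where the hypothesis that the assignments are condensed with respect to the \emph{same} tree $S$ is essential, and where Observation~\ref{obse:condensed} (all boundary edges between two given adjacent lifts carry the same voltage) does the real work. One technical point to handle with care is that an isomorphism $\varphi$ with $\phi_2\varphi = \phi_1$ need not a priori commute with the $G_1$- and $G_2$-actions; the argument above produces $\theta$ precisely as the map that measures this failure, and the voltage-compatibility relation $\theta(g\gamma_1(e)) = \theta(g)\gamma_2(e)$ is what both forces $\theta$ to be a homomorphism and equivariates $\varphi$ after the fact.
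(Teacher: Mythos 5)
Your proof is correct, and your ``if'' direction coincides with the paper's. In the converse direction you take a genuinely different route within the same geometric framework. The paper works at the level of generators: after the same base-point normalization it defines the correspondence $F$ only on the voltage values, proves $F$ is well defined and one-to-one on generators by a return-walk argument (a closed walk $p_{e}\overline{p_{f}}$ whose lift is cyclic in $\Gamma_1$ but not in $\Gamma_2$ would contradict $\phi_2\varphi=\phi_1$), and then extends $F$ to a homomorphism by checking that products of generators equal to $1_{G_1}$ map to products equal to $1_{G_2}$, i.e.\ that relations are preserved. You instead define $\theta:G_1\to G_2$ globally as the permutation of the fiber over $v_0$ induced by $\varphi$, use uniqueness of lifts of walks in the common tree $S$ (whose edges carry trivial voltage for both assignments) to obtain $\varphi(v,g)=(v,\theta(g))$ on every vertex, read off the single identity $\theta(g\gamma_1(e))=\theta(g)\gamma_2(e)$ by comparing terminal vertices of $(e,g)$ and $\varphi(e,g)=(e,\theta(g))$, and deduce multiplicativity by induction on word length in the generators. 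This buys you bijectivity and well-definedness of the correspondence for free (no separate injectivity-on-generators argument), at the price of the word-length induction and of assuming explicitly, as the paper does implicitly, that $G_i=\langle\gamma_i(E_\Delta)\rangle$; the paper's generator-level formulation is the form that is reused for the maps $F_\alpha$ in Section~\ref{LVI}. Two small points to tidy: the bijectivity of $\theta$ follows directly because $\phi_2\varphi=\phi_1$ and $\phi_1\varphi^{-1}=\phi_2$ force $\varphi$ to restrict to a bijection of the fiber over $v_0$ (connectivity of $\Gamma_2$ is not what is needed), and the identity $\theta(g\gamma_1(e))=\theta(g)\gamma_2(e)$ should be stated for all edges, not only boundary edges with $\gamma_1(e)\neq 1_{G_1}$, so that trivial-voltage edges are also seen to correspond; the same computation gives it, since $(\tau(e),g)$ lies in $gS_0^{(1)}$ whenever $\gamma_1(e)=1_{G_1}$, matching the paper's explicit remark that $\gamma_1(e)=1_{G_1}$ iff $\gamma_2(e)=1_{G_2}$.
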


We outline the proof for the convenience of the reader.

\begin{proof}
If the assignment $\gamma_1(e)\mapsto \gamma_2(e)$ for $e\in E_\Delta$ 
 extends to a group isomorphism $F : G_1 \to G_2$ then the map 
 $\Delta \ttimes {\gamma_1} G_1 \to \Delta \ttimes {\gamma_2} G_2$ defined 
 by $(v,g)\mapsto (v,F(g))$ and $(e, g) \mapsto (e, F(g))$ extends to a 
 graph isomorphism: $\varphi(\iota(e, g)) = \varphi((\iota(e), g)) = 
 (\iota(e), F(g)) = \iota(e, F(g)) = \iota(F(e, g)) = \iota(\varphi(e, g))$, 
 and similarly $\varphi \tau = \tau \varphi$.

Conversely, suppose that there exists a label preserving isomorphism 
 $\varphi : \Gamma_1 \to \Gamma_2$ such that $\phi_2\varphi=\phi_1$. Fix a base vertex $v_0 \in V_{\Delta}$ and call $(v_0,1_{G_i})$ the base vertices in $\Gamma_i=\Delta \ttimes {\gamma_i} G_i$ ($i=1,2$). 
 Without loss of generality,  we assume  that $\varphi$ maps the base vertex to the base vertex: $\varphi(v_0, 1_{G_1}) = \varphi(v_0, 1_{G_2})$.
 (If $\varphi(v_0, 1_{G_1}) = (v_0, g_2)$ with $g_2 \neq 1_{G_2}$, then as $\kappa : (v, g) \mapsto (v, g_2^{-1} g) : (e, h) \mapsto (e, g_2^{-1} h)$ is an automorphism of $\Delta \ttimes {\gamma_2} G_2$, we can take $\kappa \varphi$ instead of $\varphi$.)
Let $\Lambda_1$ and $\Lambda_2$ be the respective proper lifts of $\Delta$ in $\Gamma_1$ and $\Gamma_2$ that contain lifts of $S$ and the base vertices in $\Gamma_1$ and $\Gamma_2$. Then by Def.~\ref{def:voltage-assignment}, $\gamma_i$ is the voltage assignment of $\Delta$ with respect to $\Lambda_i$ ($i=1,2$),  and $\varphi(\Lambda_1)=\Lambda_2$. For every edge
 in the boundary of $\Lambda_i$, i.e., $(e,1_{G_i})\in\partial \Lambda_i$, $i=1,2$,
 its image $e_\Delta$ in $\Delta$ has a non-empty voltage assignment in $\Delta$. 
 In fact, by Prop.~\ref{prop:condensed_voltages}, $G_i$ is generated by these voltage assignments. 

We observe that the correspondence produces a bijection on generators. Let $F$ be the function from generators of $G_1$ to generators of $G_2$ induced by the correspondence $\gamma_1(e) \mapsto \gamma_2(e)$.

First, we claim that $F$ is well-defined and one-to-one on the generators: $\gamma_1(e) = \gamma_1(f)$ iff $\gamma_2(e) = \gamma_2(f)$.
To see that $F$ is well-defined, first observethat a cyclic path $p$ in $\Gamma_1$ at vertex $(v_0,1_{G_1})$ maps to a cyclic path $q$ in $\Gamma_2$ at vertex $(v_0,1_{G_2})$, both being lifts of the same cyclic path $p_\Delta$ at vertex $v_0$ in $\Delta$.
Then suppose there are two edges $e_\Delta,f_\Delta$ with non-identity voltages in $\Delta$ with $\gamma_1(e_\Delta)=\gamma_1(f_\Delta)=g$ while 
$h_1=\gamma_2(e_\Delta)\not =\gamma_2(f_\Delta)=h_2$. Then $e_\Delta$ and $f_\Delta$ lift in $\Gamma_1$ to edges $e_1,f_1\in \partial\Lambda_1$ that have endpoints in $g\Lambda_1$, while the same edges lift in $\Gamma_2$ to $e_2,f_2\in \partial \Lambda_2$ such that $e_2$ ends in $h_1\Lambda_2$ and $f_2$ ends in $h_2\Lambda_2\not = h_1\Lambda_2$. In particular, $h_2^{-1}h_1\Lambda_2\not=\Lambda_2$. But this contradicts 
the fact that $\varphi:\Gamma_1\to\Gamma_2$ is an isomorphism, because 
the cyclic path $p_{e_\Delta}\overline{p_{f_\Delta}}$ where $p_{e_\Delta}$ and $\overline{p_{f_\Delta}}$ are the respective return walks of $e_\Delta$ and the reverse of $f_\Delta$ from $v_0$ via $S$, lifts to a cyclic path $p$ that starts and ends in $(v_0,1_{G_1})$ in $\Gamma_1$, but  lifts to a path $q$ that is not a cyclic path in $\Gamma_2$, while $\phi_2\varphi=\phi_1$, it must be $\varphi(p)=q$. Thus $F$ is well-defined. A symmetric argument establishes that $F$ is one-to-one: 
reversing $\Lambda_1$ and $\Lambda_2$ shows that for all edges $e_\Delta$ in $\Delta$, $\gamma_2(e_\Delta)=\gamma_2(f_\Delta)$ implies $\gamma_1(e_\Delta)=\gamma_1(f_\Delta)$.
Observe that for any edge $e$ in $\Delta$, if $e$ is covered by an interior edge of $\Lambda_i$, then $\gamma_i(e) = 1_{G_i}$, and as $\varphi$ preserves interior edges, $\gamma_1(e) = 1_{G_1}$ iff $\gamma_2(e) = 1_{G_2}$.

As the sets of generators are finite and $F$ is one-to-one, $F$ is bijective on the generators, and is thus onto.  To see that $F$ extends to a homomorphism,  it suffices to verify that products of generators are mapped to corresponding products of generators.
If $g,g'$ are voltages in $G_1$ of $e_\Delta,e_\Delta'$ respectively, and $p_{e_\Delta},p_{e_\Delta'}$ are return walks from $v_0$ via $S$ in $\Delta$, respectively, then $p_{e_\Delta}p_{e_\Delta'}$ lifts to a path from $(v_0,1)$ to
$(v_0,gg')$ in $\Gamma_1$, and $(v_0,1)$ to $(v_0, hh')$ in $\Gamma_2$ where $\gamma_2(e_\Delta)=h=F(g)$, and $\gamma_2(e_\Delta')=h'=F(g')$. Remark~\ref{rem:cyclic-walk} implies that a product of generators that is $1_{G_1}$ maps into a product of generators that is $1_{G_2}$.
\end{proof} 


Let $\Gamma_1$ and $\Gamma_2$ be graphs and let $G_1 \leq \Aut(\Gamma_1)$ 
 and $G_2 \leq \Aut(\Gamma_2)$ act freely on $\Gamma_1$ and $\Gamma_2$, 
 resp, and let $\Delta_i=\Gamma_i/G_i$.
Suppose that there is an isomorphism $\psi : \Gamma_1/G_1 \to \Gamma_2 / G_2$, i.e., $\Delta_1\cong \Delta_2$.  Let $S_1$ and $S_2$ be  spanning trees of $\Gamma_1/G_1$ and 
 $\Gamma_2/G_2$, resp., such that $\psi (S_1) = S_2$. 
Let $\gamma_1$ and $\gamma_2$ be the condensed voltage assignments induced 
 by $S_1$ and $S_2$, resp.
Then by Theorem~\ref{thm:iso-voltage0} above, $\Gamma_1 \cong \Gamma_2$ iff there exists an automorphism $\alpha$ of $\Delta_1=\Gamma_1 / G_1$ fixing $S_1$ such 
 that the correspondence between voltages induced by $\psi\alpha$ extends to 
 an isomorphism of $G_1$ to $G_2$.

We turn to the case when $\Delta_1\not \cong \Delta_2$. 

\begin{exam}\label{notfree}
{\rm Recall the example in Figure~\ref{fig:upwards}(b). The voltage assignment of the graphs are depicted in 
Figure~\ref{not_free}.

\begin{figure}[ht]
\centerline{
\begin{overpic}
[width = 2.5in]{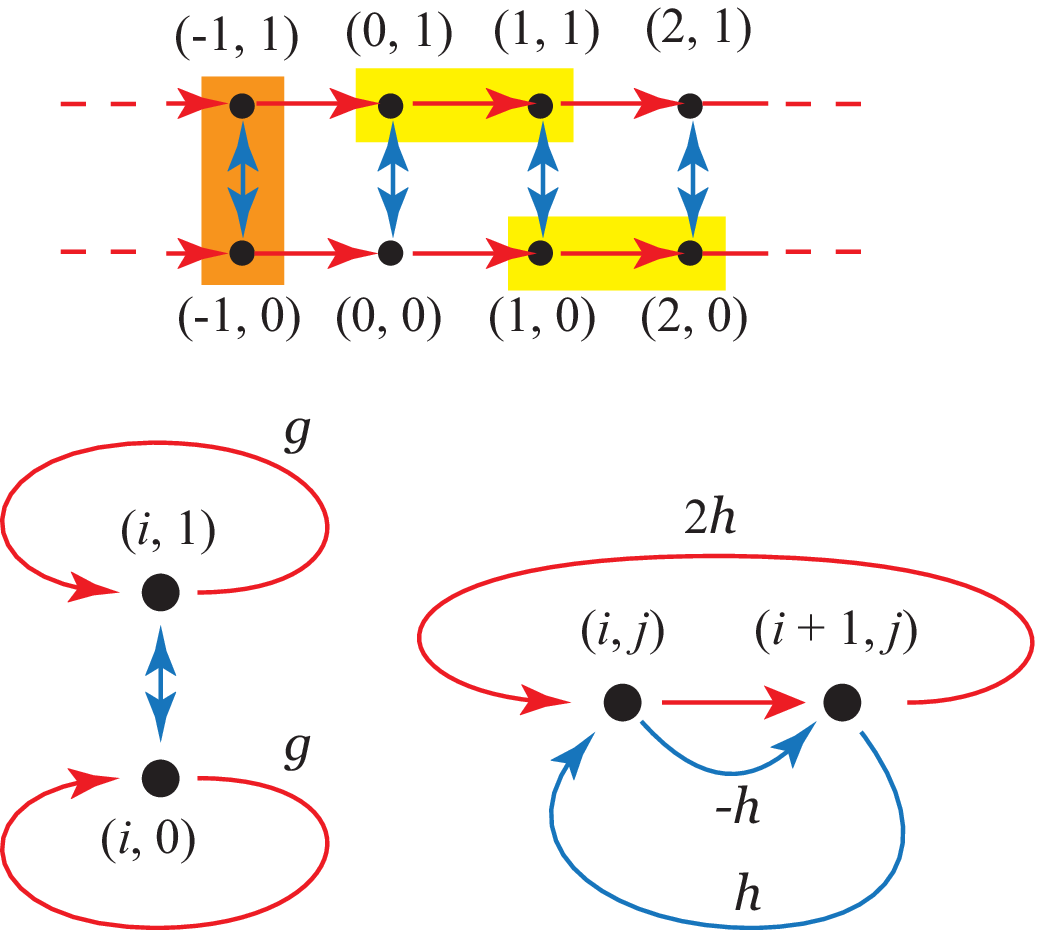} 
\put(0,65){$\Gamma$}
\put(-7,20){$\Delta_1$}
\put(100,20){$\Delta_2$}
\end{overpic}\qquad
\begin{overpic} 
[width=0.4\textwidth,trim=0 {-4cm} 0 0]{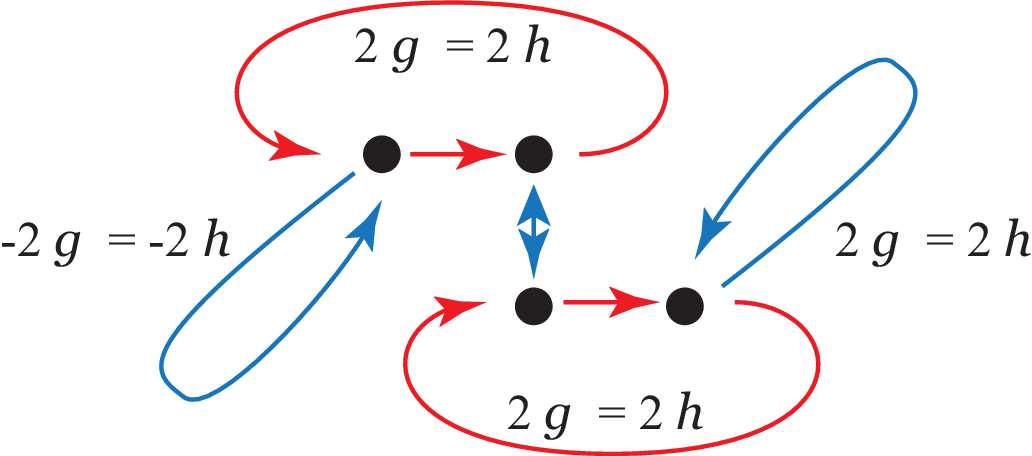} 
\put(40,75) {$\Delta_\top$}
\end{overpic}}
\caption{Consider the two quotient      graphs of $\Gamma$ at lower left and right: $\Delta_1=\Gamma / \langle g \rangle$ and 
$\Delta_2=\Gamma / \langle h \rangle$ from Figure~\ref{fig:upwards}(b). The  voltages on the quotient graphs are shown (if no voltage is given, the voltage is $0$).
One lift of $\Gamma / \langle g \rangle$ in $\Gamma$ has an   orange background while two lifts of 
  $\Gamma / \langle h \rangle$ have yellow backgrounds. The two graphs $\Delta_1$ and $\Delta_2$  are homomorphic images of the graph $\Delta_\top\cong \Gamma/\langle 2h\rangle$.}	  
\label{not_free}
\end{figure}
For  the voltage assignment to 
 $\Gamma/\langle g \rangle$, there are two possible choices of spanning 
 tree:  the tree consisting of the upward blue edge, and the tree 
 consisting of the downwards blue edge.
Meanwhile, for $\Gamma/\langle h \rangle$, each of the four edges would 
 serve as a spanning tree set.
For example, the rightwards red edge from the vertex denoted $(i, j)$ to 
 the vertex denoted $(i + 1, j)$ is a spanning tree set, which is lifted 
 to the set of tree sets
  \[
 \{ ((2i - 1, j), (2i, j)) : i \in \bbbZ \; \& \; j \in \{ 0, 1 \} \}
  \]
 as shown in Figure~\ref{not_free}.

}\end{exam}

Example~\ref{notfree} suggests that if the underlying graphs $\Delta_1$ and $\Delta_2$ of the two 
 voltage graphs are different, in order to answer the question whether $\Gamma_1\cong \Gamma_2$, we should seek a common cover $\Delta_\top$ of $\Delta_1$ and $\Delta_2$,  lift the voltages from these two voltage graphs 
 to the common cover (constructed in Prop.~\ref{Lift_voltages-new}), and compare the two lifted voltage assignments (i.e., take $\Delta_\top$ to play the role of $\Delta$ in Theorem~\ref{thm:iso-voltage0}).
This brings us to the main question we address here. 
Suppose that two finite connected 
 graphs $\Delta_1$ and $\Delta_2$ are given with respective condensed voltage assignments $\gamma_i:E_{\Delta_i}\to G$ ($i=1,2$)
 for some group $G$ and let $G_i=\langle\gamma_i(E_{\Delta_i})\rangle$.
 If $\Gamma_i$ ($i=1,2)$ are the respective derived graphs $\Gamma_i=\Delta\times_{\gamma_i} G_i$, necessarily connected by Proposition~\ref{prop:condensed_voltages}, what properties provide  $\Gamma_1\cong \Gamma_2$?
For this purpose, as observed, we  pull together Theorem~\ref{thm:iso-voltage0} and 
 Proposition~\ref{Lift_voltages-new} to obtain a condition that determines isomorphism of connected `highly symmetric' graphs, $\Gamma_1$ and $\Gamma_2$, finite or infinite. 
 \begin{figure}[ht]
\begin{center}
 \begin{tikzcd}
       \Gamma_1 = \Delta_1\times_{\gamma_1} G_1 
        \arrow[dd, "{\phi_1}" '] 
        \arrow[rd,"{\nu_1}",dashrightarrow]
        \arrow[rr, "{\varphi}", "{\cong}" '] & &
        \Gamma_2=\Delta_2\times_{\gamma_2} G_2
        \arrow[dd,  "{\phi_2}"] 
        \arrow[ld,"{\nu_2}" ',dashrightarrow]\\
        & \Delta_\top
        \arrow[ld, "{\mu_1}"  ] 
        \arrow[rd, "{\mu_2}" ' ] 
        & \\
        (\Delta_{1},\gamma_1) &  & (\Delta_2,\gamma_2)
    \end{tikzcd}
\end{center}
\caption{Situation for $\Delta_1\not \cong \Delta_2$ where, for $i=1,2$, $\Delta_i$ is a connected finite graph and $\gamma_i$ is a condensed voltage assignment in $\Delta_i$. Here $G_i=\langle\gamma_i(E_{\Delta_i})\rangle$.}\label{Dcommutes}
\end{figure}

 We note that if there is a joint cover $\Delta_\top$ for two graphs $\Delta_1$ and $\Delta_2$ then neighborhood of vertices in $\Delta_1$ must `match' neighborhoods of vertices in $\Delta_2$. We exploit this within the directed product $\Theta=\Delta_1\times\Delta_2$. 
 By Corollary~\ref{cor:cover-in-product}, if a common cover $\Delta_\top$ from Proposition~\ref{prop:upwards} exists, it is isomorphic to a subgraph of $\Theta$ and the coordinate projection maps $(v_1,v_2)\mapsto v_i$  and $(e_1,e_2)\mapsto e_i$ for $i=1,2$ act as coverings $\mu_i$ in Figure~\ref{Dcommutes}.
 Therefore it follows that a common cover of $\Delta_1$ and $\Delta_2$, if it exists, is a subgraph of $\Theta$, while the cover maps $\mu_i$ are the coordinate projections.

Given that a connected subgraph $\Delta_\top$ of $\Theta$  is a common cover for $\Delta_1$ and $\Delta_2$,  by Proposition~\ref{prop:good-cover} there are coverings $\nu_i:\Gamma_i\to \Delta_\top$ such that $\phi_i=\mu_i\nu_i$ if and only if $\Delta_\top$ is a good common cover for both $\Delta_1$ and $\Delta_2$.

We observe that $\Theta$ 
may contain more than one good common cover, as shown in  Figure~\ref{fig:counterexample_query}.

In order to apply Proposition~\ref{Lift_voltages-new}, we need both $\mu_1$ and $\mu_2$ to be regular. 
If $G_1$ and $G_2$ are Dedekind, then all their subgroups are normal, and by Theorem~\ref{thm:sunada}, $G_{\mu_i}=G_i/G_{\nu_i}$ and $\mu_i$ are regular ($i=1,2$). 
Otherwise, one needs to determine whether $G_{\nu_i}$ is normal in $G_i$, $i= 1, 2$. 

\begin{rema}
For $i = 1, 2$, $G_{\nu_i}$ is normal in $G_i$ if and only if, for each 
 $\gamma_i(e)$, $e \in E_{\Delta_i}$ and each $\gamma_i'(e')$, $e' \in 
 E_{\top}$, $\gamma(e) \gamma'(e') \gamma(e)^{-1} \in G_{\nu_i}$.
\end{rema}

A good common cover $\Delta_\top$ is said to be {\em unresolved} if at least one of $\mu_1$ or $\mu_2$ are not regular.

Given a good common cover $\Delta_\top$ with regular $\mu_i$ ($i=1,2$), $\Gamma_i\cong \Delta_\top \ttimes {\gamma_i'} G_{\nu_i}$ by Proposition~\ref{Lift_voltages-new}, and it remains to determine if the lifted and condensed 
 voltage assignments  $\gamma_i'$ on $\Delta_\top$ give  $\Delta_\top \ttimes 
 {\gamma_1'} G_{\nu_1} \cong \Delta_\top \ttimes {\gamma_2'} G_{\nu_2}$ which is determined by Theorem~\ref{thm:iso-voltage0}.

\begin{figure}[th]
\begin{center}
\includegraphics*[width = 2.2in]{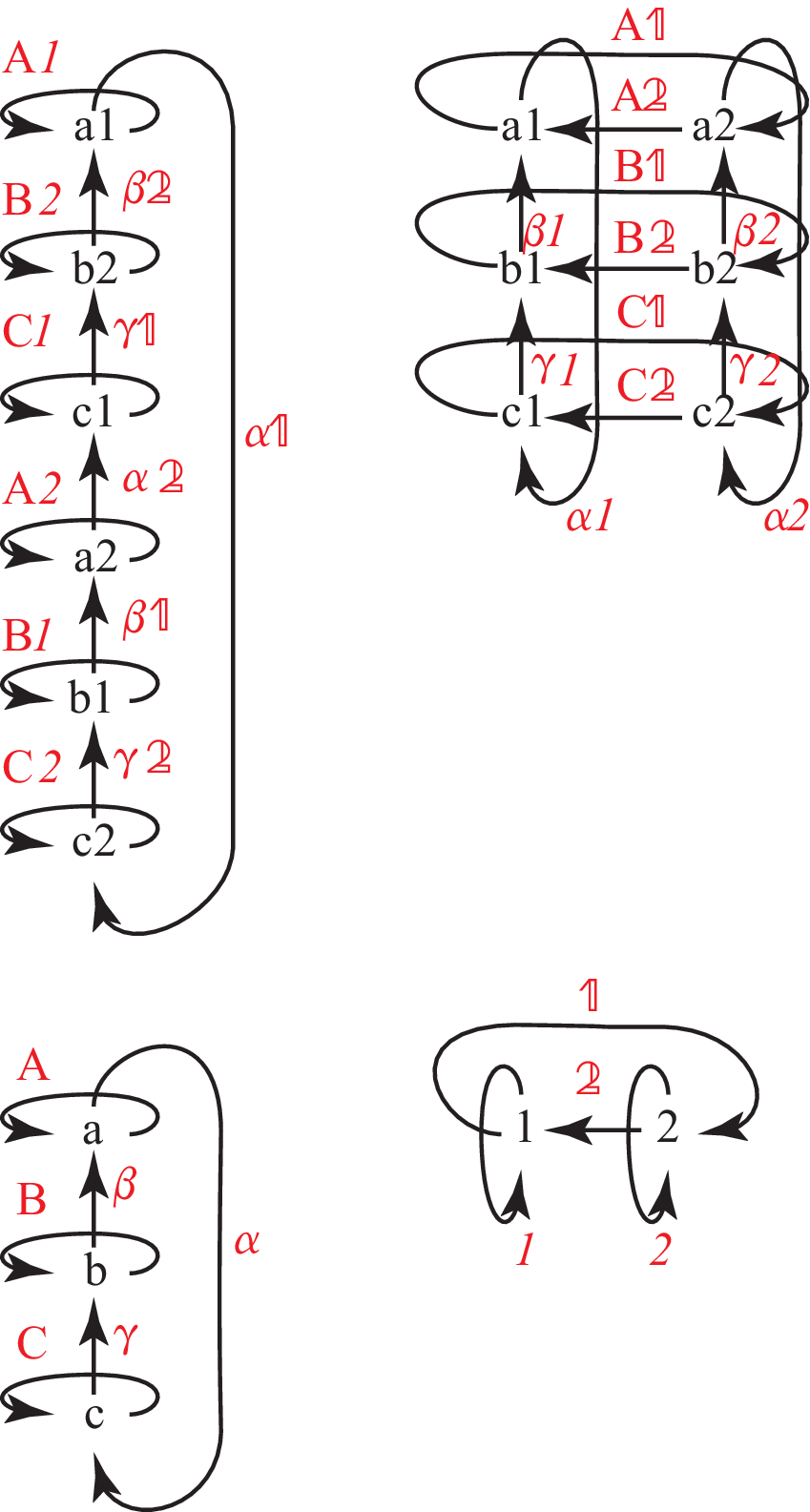} \end{center}
\caption{The vertices are labeled in black, the edges in red.
         The top two graphs are good common covers of the bottom two, and 
          the covers preserve edge and vertex labels.}
\label{fig:counterexample_query}
\end{figure}

Notice that every automorphism of $\Delta_{\top}$ that fixes a spanning tree $S_\top$ permutes edges that 
 share initial and terminal endpoints.
For each such automorphism $\alpha$ of $\Delta_\top$,
 let $F_{\alpha} : \gamma_1'(E_\top) \to \gamma_2'(E_\top)$ be defined by 
 $\gamma'_1(e) \mapsto \gamma'_2(\alpha(e))$ for $e\in E_\top$.
Because $\mu_i$ ($i=1,2$) are regular, by 
 Proposition~\ref{Lift_voltages-new}, 
 $\Delta_\top \times_{\gamma_1'} 
 \langle \gamma_1'(E_\top) \rangle \cong \Delta_1 \times_{\gamma_1}  G_1=\Gamma_1$, 
 and similarly for $\Delta_2$.

\begin{defn}
Let $\Delta_\top$
be a good common cover of $\Delta_1$ and $\Delta_2$ with regular coverings $\mu_1$ and $\mu_2$. We say that $\Delta_\top$ is {\em successful} if 
there exists an automorphism $\alpha$ on $\Delta_\top$ fixing a spanning tree $S_\top$ such that $F_\alpha$ extends to 
 a group isomorphism $\langle \gamma_1'(E_\top) \rangle \to \langle \gamma_2'(\alpha E_\top) 
 \rangle$, where $\gamma_i'$ is a condensed lift of $\gamma_i$ with respect to $S_\top$.

If there is no such $\alpha$ we say that $\Delta_\top$ {\em fails}.
\end{defn}

Thus, if $\Delta_\top$ is successful, then by Theorem~\ref{thm:iso-voltage0} we conclude that $\Delta_\top 
 \times_{\gamma_1'} G_{\nu_1} \cong  \Delta_\top \times_{\gamma_2'} G_{\nu_2}$, 
 and hence $\Gamma_1=\Delta_1 \times_{\gamma_1}  G_1 \cong \Delta_2 \times_{\gamma_2} 
 G_2=\Gamma_2$. On the other hand, if $\Delta_\top$ fails, $\Delta_1 \ttimes {\gamma_1} G_1 \not\cong 
 \Delta_2 \ttimes {\gamma_2} G_2$. 

Summing all above we have:

\begin{prop}
If $\Theta$ has as a subgraph  a successful good common cover $\Delta_\top$ for $\Delta_1$ and $\Delta_2$,  then $\Delta_1 \ttimes 
 {\gamma_1} G_1 \cong \Delta_2 \ttimes {\gamma_2} G_2$.
If there is a good common cover that fails, or if there 
 is no good common cover, then $\Delta_1 \ttimes {\gamma_1} G_1 \not\cong 
 \Delta_2 \ttimes {\gamma_2} G_2$.
\end{prop}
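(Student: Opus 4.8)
The plan is to prove the two implications of the proposition separately, routing everything through a common cover so that Proposition~\ref{Lift_voltages-new} and Theorem~\ref{thm:iso-voltage0} apply; these two results do essentially all of the work, and the proof is mainly a matter of assembling them correctly. \emph{The ``successful'' direction.} Suppose $\Delta_\top\subseteq\Theta$ is a successful good common cover, with $\mu_1,\mu_2$ regular and an automorphism $\alpha$ of $\Delta_\top$ fixing a spanning tree $S_\top$ such that $F_\alpha$ extends to a group isomorphism $\langle\gamma_1'(E_\top)\rangle\to\langle\gamma_2'(\alpha E_\top)\rangle$. Since each $\mu_i$ is regular, Proposition~\ref{Lift_voltages-new} gives $\Gamma_i\cong\Delta_\top\times_{\gamma_i'}\langle\gamma_i'(E_\top)\rangle$. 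Because $\alpha$ is a graph automorphism, $(x,g)\mapsto(\alpha x,g)$ is an isomorphism $\Delta_\top\times_{\gamma_2'\circ\alpha}\langle\gamma_2'(\alpha E_\top)\rangle\to\Delta_\top\times_{\gamma_2'}\langle\gamma_2'(E_\top)\rangle$. As $\gamma_1'$ and $\gamma_2'\circ\alpha$ are both condensed assignments on $\Delta_\top$ with respect to $S_\top$, Theorem~\ref{thm:iso-voltage0} (with $\Delta_\top$ in the role of $\Delta$) converts $F_\alpha$ into an isomorphism $\Delta_\top\times_{\gamma_1'}\langle\gamma_1'(E_\top)\rangle\cong\Delta_\top\times_{\gamma_2'\circ\alpha}\langle\gamma_2'(\alpha E_\top)\rangle$, and composing the three isomorphisms gives $\Gamma_1\cong\Gamma_2$.

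\emph{The converse, part one: a good common cover exists.} Assume $\varphi:\Gamma_1\to\Gamma_2$ is an isomorphism. Transport the free $G_2$-action on $\Gamma_2$ through $\varphi$ to a free action of a subgroup $\overline G_2\le\Aut(\Gamma_1)$ with $\Gamma_1/\overline G_2\cong\Delta_2$ (while $\Gamma_1/G_1\cong\Delta_1$ already, using that the canonical projection of a condensed voltage graph is regular with cover group $G_1$). Apply Proposition~\ref{prop:upwards} to $\Gamma_1$ with these two quotients: it produces a connected $\Delta_\top$, a single covering $\nu:\Gamma_1\to\Delta_\top$, and coverings $\mu_i:\Delta_\top\to\Delta_i$ with $\mu_1\nu=\phi_1$ and $\mu_2\nu$ the quotient map onto $\Delta_2$, whose cover group is $G_1\cap\overline G_2$. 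By Corollary~\ref{cor:cover-in-product}, $\Delta_\top$ is isomorphic to a subgraph of $\Theta$ with the coordinate projections as $\mu_i$; and by Proposition~\ref{prop:good-cover} (whose constructed lifting map agrees with $\nu$, and with $\nu\varphi^{-1}:\Gamma_2\to\Delta_\top$, by the uniqueness of lifts, Lemma~\ref{lem:lift-path}) $\Delta_\top$ is a good common cover of $\Delta_1$ and $\Delta_2$. Thus the alternative ``there is no good common cover'' cannot occur.

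\emph{The converse, part two: no good common cover fails.} Let $\Delta_\top'$ be any good common cover with $\mu_1',\mu_2'$ regular. By Theorem~\ref{thm:sunada} the induced coverings $\nu_i':\Gamma_i\to\Delta_\top'$ are regular, and by Proposition~\ref{Lift_voltages-new} there are isomorphisms $\rho_i:\Gamma_i\to\Delta_\top'\times_{\gamma_i''}\langle\gamma_i''(E_\top')\rangle$ over the projections, i.e.\ $p_i\rho_i=\nu_i'$, where $\gamma_i''$ is the condensed lift of $\gamma_i$ to $\Delta_\top'$ with respect to a common spanning tree $S_\top'$. Put $\Xi=\rho_2\varphi\rho_1^{-1}$; then $p_1=\nu_1'\rho_1^{-1}$ and $p_2\Xi=\nu_2'\varphi\rho_1^{-1}$. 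The remaining step is to find an automorphism $\alpha$ of $\Delta_\top'$ fixing $S_\top'$ with $\alpha\nu_1'=\nu_2'\varphi$. For the minimal cover of part one this holds with $\alpha=\mathrm{id}$, since there $\nu_1'=\nu$ and $\nu_2'=\nu\varphi^{-1}$, so $p_2\Xi=\nu\varphi^{-1}\varphi\rho_1^{-1}=p_1$; for a general $\Delta_\top'$ I would derive it from the universal property of Proposition~\ref{prop:upwards} together with the freeness of the cover-group actions (Proposition~\ref{FreeCover}). Given such $\alpha$, the composite $(\alpha^{-1},\mathrm{id})\circ\Xi$ commutes with $p_1$, so Theorem~\ref{thm:iso-voltage0} shows that $\gamma_1''(e)\mapsto\gamma_2''(\alpha e)$ extends to a group isomorphism, i.e.\ $\Delta_\top'$ is successful rather than failing; hence if some good common cover fails, $\Gamma_1\not\cong\Gamma_2$.

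The main obstacle is exactly the step just isolated: showing that an arbitrary graph isomorphism $\Gamma_1\cong\Gamma_2$ can, after composition with a suitable automorphism $\alpha$ of the common cover, be made compatible with the coverings onto $\Delta_\top'$. For the minimal common cover the bookkeeping above makes this immediate, but for an arbitrary good common cover it amounts to proving that the two regular coverings $\nu_1'$ and $\nu_2'\varphi$ of $\Delta_\top'$ are equivalent up to an automorphism of $\Delta_\top'$ — this is where the product structure of $\Theta$ and the regularity of $\mu_1',\mu_2'$ have to be brought in, and it is the only place where more than a routine citation of the earlier results is needed.
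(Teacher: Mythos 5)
Your first half (successful $\Rightarrow$ isomorphic) and your argument that an isomorphism $\Gamma_1\cong\Gamma_2$ forces the existence of a good common cover inside $\Theta$ both follow the paper's route: Proposition~\ref{Lift_voltages-new} combined with Theorem~\ref{thm:iso-voltage0} (after twisting by $\alpha$, which is legitimate since $\alpha$ fixes $S_\top$ and so $\gamma_2'\circ\alpha$ is still condensed) for the first, and Proposition~\ref{prop:upwards}, Corollary~\ref{cor:cover-in-product} and Proposition~\ref{prop:good-cover}, applied to the regular coverings $\phi_1$ and $\phi_2\varphi$ of $\Gamma_1$, for the second. These parts are correct and are essentially the paper's argument.

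The gap is the one you flag yourself, and it is genuine: the remaining claim is that if $\Gamma_1\cong\Gamma_2$ then \emph{every} good common cover $\Delta_\top'$ with regular projections is successful, so that no good common cover can fail. Your reduction of this to finding $\alpha\in\Aut(\Delta_\top')$ fixing $S_\top'$ with $\alpha\nu_1'=\nu_2'\varphi$ is the right shape of statement (it is exactly what is needed to make the composite isomorphism commute with the two projections, which is a hypothesis of Theorem~\ref{thm:iso-voltage0}), but you verify it only for the minimal cover of Proposition~\ref{prop:upwards}, and the proposed fix for a general $\Delta_\top'$ --- ``the universal property of Proposition~\ref{prop:upwards} together with freeness'' --- does not deliver it as stated. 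The universal property produces a covering from another common cover \emph{onto} the minimal $\Delta_\top$; it says nothing about automorphisms of an arbitrary good common cover $\Delta_\top'$, and two coverings $\nu_1'$ and $\nu_2'\varphi:\Gamma_1\to\Delta_\top'$ need not differ by an automorphism of $\Delta_\top'$ without a further argument (this amounts to a conjugacy statement about the cover groups $G_{\nu_1'}$ and $\varphi^{-1}G_{\nu_2'}\varphi$ inside $\Aut(\Gamma_1)$, which you never establish). Moreover, the requirement that $\alpha$ fix the chosen spanning tree $S_\top'$ --- i.e., that $\alpha$ merely permute parallel edges --- is an extra restriction your sketch does not address; without it $\gamma_2'\circ\alpha$ need not be condensed with respect to $S_\top'$ and Theorem~\ref{thm:iso-voltage0} does not apply. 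For what it is worth, the paper itself does not fill this in: its ``fails $\Rightarrow$ not isomorphic'' half rests on the same one-line appeal to Theorem~\ref{thm:iso-voltage0} in the paragraph preceding the proposition, so you have located precisely the step that needs more than a citation; but as a proof proposal, that half remains unproved.
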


If there exist a good common cover as a subset of $\Theta$, but all such good covers are unresolved, we cannot conclude whether $\Gamma_1$ is isomorphic to $\Gamma_2$.

In particular, when $G_1, G_2 \le G$ where $G$ is abelian, the covers $\mu_i$ ($i=1,2$) are regular, hence we have the following theorem. 

\begin{theo}
Let $G$ be an abelian group. Let   $(\Delta_i,\gamma_i)$ $(i=1,2)$ be  a finite connected graph with  a condensed voltage assignment $\gamma_i$ 
such that $G_i=\langle \gamma_i(E_{\Delta_i})\rangle \le G$. Then $\Delta_1 \ttimes {\gamma_1} G_{1} \cong 
 \Delta_2 \ttimes {\gamma_2} G_{2}$ if and only if $\Delta_1\times\Delta_2$ has a connected subgraph that is a successful good common cover of 
 $\Delta_1$ and $\Delta_2$.
\end{theo}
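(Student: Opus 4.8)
The plan is to assemble the theorem from the machinery already developed, treating it as a near-corollary of the general Proposition that precedes it together with the observation that for abelian $G$ the regularity hypotheses are automatic. I would first record the easy direction: if $\Delta_1 \times \Delta_2$ contains a connected subgraph $\Delta_\top$ that is a successful good common cover, then by the displayed Proposition (the one summarizing the preceding discussion) we immediately get $\Delta_1 \ttimes {\gamma_1} G_1 \cong \Delta_2 \ttimes {\gamma_2} G_2$. Nothing abelian is needed here; this direction holds in the Dedekind generality.

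For the converse, suppose $\varphi : \Gamma_1 \to \Gamma_2$ is an isomorphism, where $\Gamma_i = \Delta_i \ttimes {\gamma_i} G_i$. The key structural fact is that $\Gamma_i$ is connected (Proposition~\ref{prop:condensed_voltages}, since the $\gamma_i$ are condensed), and each comes with its regular canonical covering $\phi_i : \Gamma_i \to \Delta_i$ whose cover group is (isomorphic to) $G_i$ acting freely. Transporting $\phi_1$ through $\varphi^{-1}$, I would apply Proposition~\ref{prop:upwards} to the connected graph $\Gamma := \Gamma_1 \cong \Gamma_2$ with its two regular coverings onto $\Delta_1$ and onto $\Delta_2$: this yields a finite connected common cover $\Delta_\top$ with $|\Delta_\top| \le |\Delta_1|\,|\Delta_2|$, regular coverings $\mu_i : \Delta_\top \to \Delta_i$ and $\nu_i : \Gamma \to \Delta_\top$ with $\phi_i = \mu_i \nu_i$, and by Corollary~\ref{cor:cover-in-product} this $\Delta_\top$ is (isomorphic to) a connected subgraph of $\Theta = \Delta_1 \times \Delta_2$ with the $\mu_i$ realized as coordinate projections. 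Since $\phi_i = \mu_i\nu_i$ factors through $\Delta_\top$, Proposition~\ref{prop:good-cover} forces $\Delta_\top$ to be a good common cover of both $\Delta_1$ and $\Delta_2$ (the factoring $\nu_i$ is exactly the covering whose existence is equivalent to goodness).

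It remains to check that this $\Delta_\top$ is \emph{successful}. Here is where abelianness enters twice: first, every subgroup of an abelian group is normal, so $G_{\nu_i} \trianglelefteq G_i$ and by Theorem~\ref{thm:sunada} the $\mu_i$ are regular — so $\Delta_\top$ is not unresolved, and Proposition~\ref{Lift_voltages-new} applies to give $\Gamma_i \cong \Delta_\top \ttimes {\gamma_i'} G_{\nu_i}$ where $\gamma_i'$ is a condensed lift of $\gamma_i$ with respect to a common spanning tree set $S_\top$ of $\Delta_\top$. Now $\varphi$ becomes an isomorphism $\Delta_\top \ttimes {\gamma_1'} \langle \gamma_1'(E_\top)\rangle \to \Delta_\top \ttimes {\gamma_2'} \langle \gamma_2'(E_\top)\rangle$ commuting with the coverings to $\Delta_\top$ — after possibly composing with an automorphism $\alpha$ of $\Delta_\top$ that fixes $S_\top$ (an isomorphism of derived graphs need only cover $\Delta_\top$ up to an automorphism fixing the spanning tree, since both lifts assign identity to $S_\top$; this is the standard reduction used in the proof of Theorem~\ref{thm:iso-voltage0}). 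Then Theorem~\ref{thm:iso-voltage0}, applied with $\Delta_\top$ in the role of $\Delta$, shows $F_\alpha : \gamma_1'(e) \mapsto \gamma_2'(\alpha e)$ extends to a group isomorphism $\langle \gamma_1'(E_\top)\rangle \to \langle \gamma_2'(\alpha E_\top)\rangle$, i.e.\ $\Delta_\top$ is successful.

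The main obstacle I anticipate is the bookkeeping in the last step: matching up the two notions of "common cover" (the intrinsic one from Proposition~\ref{prop:upwards}, built from $\Gamma$ and the two cover groups, versus the subgraph-of-$\Theta$ one), verifying that the spanning tree sets can be chosen compatibly so that $\gamma_1'$ and $\gamma_2'$ are both condensed with respect to the \emph{same} $S_\top$ (this uses Proposition~\ref{prop:condensed_voltages}, which allows any spanning tree set), and handling the automorphism $\alpha$ correctly — one must be careful that $\varphi$ need not literally commute with the projections to $\Delta_\top$ but only up to such an $\alpha$, which is precisely why the definition of \emph{successful} quantifies over $\alpha$. The abelian hypothesis does the real work of removing the "unresolved" escape hatch; everything else is reassembling results already proved.
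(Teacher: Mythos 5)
Your proposal is correct and follows essentially the paper's own route: the paper obtains this theorem directly from the preceding summarizing proposition together with the observation that abelian voltage groups make every subgroup normal, so by Theorem~\ref{thm:sunada} the projections $\mu_i$ are regular and no good common cover can be unresolved. Your converse direction simply unpacks that same chain (Proposition~\ref{prop:upwards} and Corollary~\ref{cor:cover-in-product} for the common cover inside $\Delta_1\times\Delta_2$, Proposition~\ref{prop:good-cover} for goodness, Proposition~\ref{Lift_voltages-new} once regularity is secured, and Theorem~\ref{thm:iso-voltage0} for successfulness), which is exactly the argument the paper intends.
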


Given a finitely generated abelian group $G$ we assume that $G$ is given with a standard presentation as is implied from the Fundamental Theorem of Finitely Generated Abelian Groups as a direct products of free abelian group with direct products of finite cyclic groups. 

\begin{cor}
Let $G$ be a finitely generated abelian group. Let   $(\Delta_i,\gamma_i)$, $i=1 ,2$, be  finite connected graphs with condensed voltage assignments $\gamma_i$ 
such that $G_i=\langle \gamma_i(E_{\Delta_i})\rangle \le G$. It is decidable whether $\Delta_1 \ttimes {\gamma_1} G_{1} \cong 
 \Delta_2 \ttimes {\gamma_2} G_{2}$.
\end{cor}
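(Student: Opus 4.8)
The plan is to convert the preceding theorem into an algorithm and bound the search space. By the theorem, $\Delta_1 \ttimes {\gamma_1} G_1 \cong \Delta_2 \ttimes {\gamma_2} G_2$ if and only if the finite graph $\Theta = \Delta_1 \times \Delta_2$ contains a connected subgraph that is a successful good common cover of $\Delta_1$ and $\Delta_2$. Since $\Theta$ is a finite graph (it has $|\Delta_1|\,|\Delta_2|$ vertices), it has only finitely many connected subgraphs, so the plan is to enumerate them and, for each candidate $\Delta_\top$, decide the three conditions: (i) that the two coordinate projections $\mu_i : \Delta_\top \to \Delta_i$ are coverings; (ii) that $\Delta_\top$ is a \emph{good} common cover, i.e.\ every lift of a cyclic walk $p$ in $\Delta_i$ with $\gamma_i(p) = 1$ closes up in $\Delta_\top$; and (iii) that $\Delta_\top$ is \emph{successful}, i.e.\ some automorphism $\alpha$ of $\Delta_\top$ fixing a chosen spanning tree $S_\top$ makes $F_\alpha$ extend to a group isomorphism $\langle\gamma_1'(E_\top)\rangle \to \langle\gamma_2'(\alpha E_\top)\rangle$. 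Conditions (i) is a trivial finite check on in/out-degrees; condition (ii) reduces, via a spanning tree of $\Delta_i$ and the finitely many fundamental return walks, to a finite set of lifting checks (by Lemma~\ref{lem:lift-path} each lift is unique and computable); and the automorphisms $\alpha$ in (iii) range over a finite group, so the only genuine content is deciding whether the induced map on generators extends to a group isomorphism.

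The first step I would carry out is to make precise that the lifted and condensed voltage assignments $\gamma_i'$ are computable: given $\Delta_\top \le \Theta$ and a spanning tree set $S_\top$, the lift $\gamma_i' : E_{\Delta_\top} \to G$ is just $e \mapsto \gamma_i(\mu_i(e))$, and condensation with respect to $S_\top$ replaces each $\gamma_i'(e)$ by the product of voltages along the return walk $p_e$, a finite computation in $G$ using the standard presentation. So the generating sets $\gamma_i'(E_\top)$ are explicitly available as finite lists of elements of $G$, presented as integer vectors in the free part together with residues in the cyclic parts.

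The second and crucial step is that for a \emph{finitely generated abelian} group $G$ given in standard form $\bbbZ^r \times \bbbZ_{d_1} \times \cdots \times \bbbZ_{d_s}$, it is decidable whether a map defined on a finite generating set of one finitely generated subgroup $H_1 = \langle\gamma_1'(E_\top)\rangle$ to another $H_2 = \langle\gamma_2'(\alpha E_\top)\rangle$ extends to a group isomorphism $H_1 \to H_2$. Here one uses that subgroups of f.g.\ abelian groups are f.g.\ and effectively computable (Hermite/Smith normal form over $\bbbZ$ applied to the matrix whose columns are the generators, together with the relations coming from the torsion), that the isomorphism type of a f.g.\ abelian group is an effectively computable complete invariant, and that a map on generators extends to a homomorphism iff every $\bbbZ$-linear relation among the source generators (again computable via the kernel of the presentation matrix) is respected by the images; checking that the resulting homomorphism is onto $H_2$ and injective is then a matter of comparing computed Smith normal forms of images and kernels. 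Thus each instance of condition (iii) for a fixed $(\Delta_\top, S_\top, \alpha)$ is decidable, and since there are finitely many triples, the whole procedure terminates.

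Finally I would assemble these pieces: enumerate the finitely many connected subgraphs $\Delta_\top$ of $\Theta$; for each, check (i) and (ii) by the finite checks above; for those passing, fix one spanning tree $S_\top$, compute $\gamma_1', \gamma_2'$, and loop over the finitely many automorphisms $\alpha$ of $\Delta_\top$ fixing $S_\top$, testing extendability to an isomorphism by the linear-algebra-over-$\bbbZ$ procedure; output ``isomorphic'' if any triple succeeds and ``not isomorphic'' otherwise, the correctness being exactly the content of the preceding theorem. I expect the main obstacle to be purely bookkeeping rather than conceptual: carefully justifying that ``good common cover'' is a finitely checkable property (reducing from the a priori infinitely many cyclic walks of voltage $1$ to the finitely many generators of the relevant fundamental group/cycle space and their voltage-$1$ combinations) and that the condensed lifts and the induced generator correspondences are all effectively computable from the input data. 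None of this requires $G$ to be finite — only finitely generated — which is what the standard presentation hypothesis supplies.
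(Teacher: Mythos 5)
Your proposal is correct and follows essentially the same route as the paper's proof: enumerate the finitely many connected subgraphs of $\Theta=\Delta_1\times\Delta_2$, test each candidate for being a (good, successful) common cover, compute the lifted and condensed voltages via the projection maps and return walks, and reduce the final ``successful'' test to deciding whether a generator correspondence extends to an isomorphism of subgroups of a finitely generated abelian group. You in fact supply more detail than the paper on the effective group theory (Smith/Hermite normal form) and correctly flag the one point the paper also glosses over, namely reducing the ``good cover'' condition from all voltage-$1$ cyclic walks to a finite computation on the cycle space.
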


\begin{proof}
    Suppose $(\Delta_i,\gamma_i)$, $i=1, 2$, be finite connected graphs with condensed voltage assignments $\gamma_i$ such that $G_i=\langle \gamma_i(E_{\Delta_i})\rangle \le G$ where $G$ is abelian. Note that $\Delta_i \ttimes {\gamma_i} G_{i}$ may be infinite, but by Proposition~\ref{prop:condensed_voltages}, both are connected. 
We observe that vertex figures in $\Delta_1$ must match those of $\Delta_2$, and this can be checked by listing degrees of vertices in an increasing order. If this fails, the isomorphism does not exist. We assume that $G$ is finitely presented as indicated by the Fundamental Theorem of Finitely Presented Abelian Groups.  We also assume that voltage assignments $\gamma_i$ are using the same presentation. The graph $\Theta=\Delta_1\times \Delta_2$ is constructible. 
The number of connected subgraphs of $\Theta$ that cover both $\Delta_1$ and $\Delta_2$
are finite, and one can check whether any of these subgraphs are common covers by checking the projection maps $\mu_i$ (Corollary~\ref{cor:cover-in-product}).
If such common cover is not found,  the isomorphism does not exist (Proposition~\ref{prop:upwards}). If $\Delta_\top$ is a common cover, since each $\Delta_i$ is finite, and $G$ is abelian, hence has a solvable word problem, one can check whether $\Delta_\top$ is a good common cover. That is, whether every cyclic path with voltage being identity in $\Delta_i$ lifts to cyclic paths in $\Delta_\top$. 
If $\Delta_\top$ is not a good common cover, the isomorphism does not exist (by Proposition~\ref{prop:good-cover}, covers $\nu_i$ do not exist). Lifting voltages $\gamma_i$ in $\Delta_\top$ is constructable, since $\mu_i$ are projection maps. 
We choose a spanning tree set $S_\top$ in $\Delta_\top$ and condense the lifts of $\gamma_i$ with respect to $S_\top$ to obtain $\gamma_i'$. Since $\Delta_\top$ is finite and $G$ has a solvable word problem, the voltage assignments $\gamma_i'$ as described in Definition~\ref{def:condensation} are constructible. 
Finally, for any automorphism $\alpha:\Delta_\top\to\Delta_\top$ that fixes $S_\top$, i.e., permutes parallel edges,  we check whether the map $\gamma_1'(e)\mapsto\gamma_2'(\alpha(e))$ extends to an isomorphism 
(since each $\Delta_i \ttimes {\gamma_i} G_{i}$ is connected, Theorem~\ref{thm:iso-voltage0} applies), which for subgroups of finitely presentable abelian groups is decidable. 
\end{proof}


\section{Directions for Future Research}


We showed a constructive method to determine whether the derived graphs of two voltage graphs  $(\Delta_1, \gamma_1)$ and $(\Delta_2, \gamma_2)$ are isomorphic, when $\gamma_1$ and $\gamma_2$ are voltages from a commutative or Hamiltonian group. The method requires construction of a 
 good common cover $\Delta_{\top}$ with associated coverings $\mu_i$ and 
 $\nu_i$. In general, however, even when $\Delta_\top$ is a good common cover, the projection maps $\Delta_\top \to \Delta_i$ may not be regular, and hence, Proposition~\ref{Lift_voltages-new} does not apply. This is not a spurious issue, for there are examples of this situation. 


For example, choose a finite group $G$ with two subgroups $G_1$ and $G_2$ 
 such that $G_1 \cap G_2$ is not normal in at least one of $G_1$ or $G_2$.
One possible construction: choose a group $G_1'$ with an 
 subgroup $H < G_1'$ such that $H$ is not normal, and for any finite group $K$, let $G = G_1' 
  \times K$.
 Let $G_1 = G_1' \times \{ 1_K \}$ and $G_2 = H \times K$.
 Then $G_1 \cap G_2 = H \times \{ 1_K \}$ is not normal in $G_1$.
By Frucht's theorem (\cite{Frucht}), there is a graph $\Gamma$ such that 
 the automorphism group of $\Gamma$ is isomorphic to $G$.
In fact, this automorphism group acts freely on $\Gamma$ as it is the 
 result of taking a colored Cayley graph of $G$ (using generators 
 for $G_1 \cap G_2$, and then generators for $G_1$ and then generators for 
 $G_2$) and replacing the colors with ``gadgets'' representing colors
 (see, e.g., \cite[Theorem 3.3]{LS}).
Then $G$ acts freely on $\Gamma$, and following the constructions of Section~\ref{LVI}, 
 we obtain $\Delta_{\top} \cong \Gamma / (G_1 \cap G_2)$, so that for each $i$, the 
 composition $\Gamma \stackrel{\nu_i}{\to} \Delta_{\top} 
 \stackrel{\mu_1}{\to} \Delta_i$ has $G_{\nu_i} = G_1 \cap G_2 = H \times \{ 1_K \}$.
But $H \times \{ 1_K \}$ is not normal in $G_1$, then $\mu_1$ is not 
 regular, so the construction in Section~\ref{LVI} produces no answer.

The general construction that we described in Section~\ref{LVI} is abstract, and in case of arbitrary groups may not be algorithmic. For example, getting a good cover in $\Delta_1\times\Delta_2$ (Proposition~\ref{prop:good-cover}) may need a solvable word problem, or, applying Theorem~\ref{thm:iso-voltage0} requires solvable group isomorphism. However, choosing a good presentation of an abelian group, the method, although brute force, provides an algorithm that decides whether two voltage graphs have isomorphic derived graphs. 
It would be of interest to 
 develop a more tractable algorithm that could be implemented for specific voltage graphs.
This would be useful as graph isomorphism is used to classify graphical 
 representations of the atomic or molecular structure of covalent 
 crystals (\cite{RCSR}).
We would not expect a PTIME algorithm, and the question of determining precise computational 
 complexity for specific classes of voltage graphs may be of interest within the complexity theory, as well as in graph theory.

\bibliographystyle{plain}
\bibliography{JKM}

\end{document}